\documentclass[11pt]{amsart}
\usepackage[margin=1.12in]{geometry}
\usepackage{amsmath,amssymb,amsthm,mathtools}
\usepackage{mathrsfs}
\usepackage{enumitem}
\usepackage{tikz-cd}
\usepackage{csquotes}
\usepackage[backend=biber,style=alphabetic,maxbibnames=99,maxcitenames=5,giveninits=true]{biblatex}
\usepackage[colorlinks=true,citecolor=blue,linkcolor=red,urlcolor=blue]{hyperref}

\numberwithin{equation}{section}

\newtheorem{theorem}{Theorem}[section]
\newtheorem{proposition}[theorem]{Proposition}
\newtheorem{lemma}[theorem]{Lemma}
\newtheorem{corollary}[theorem]{Corollary}

\newtheorem{mainthm}{Theorem}
\newtheorem{maincor}[mainthm]{Corollary}

\theoremstyle{definition}
\newtheorem{definition}[theorem]{Definition}

\newtheorem{example}[theorem]{Example}
\newtheorem{remark}[theorem]{Remark}

\theoremstyle{remark}

\newcommand{\A}{\mathsf A}
\newcommand{\K}{\mathsf K}
\newcommand{\G}{\mathcal G}
\newcommand{\Hh}{\mathcal H}
\newcommand{\B}{\mathcal B}
\newcommand{\F}{\mathcal F}
\newcommand{\T}{\mathcal T}
\newcommand{\Q}{\mathbb Q}
\newcommand{\Z}{\mathbb Z}
\newcommand{\PP}{\mathbb P}
\newcommand{\LL}{\mathcal L}
\newcommand{\OO}{\mathcal O}
\newcommand{\rk}{\operatorname{rk}}

\newcommand{\cone}{\operatorname{cone}}
\newcommand{\Star}{\operatorname{Star}}
\newcommand{\Fac}{\operatorname{Fac}}
\newcommand{\ch}{\operatorname{ch}}
\newcommand{\td}{\operatorname{td}}
\newcommand{\Td}{\operatorname{Td}}
\newcommand{\Bl}{\operatorname{Bl}}

\newcommand{\Gmax}{\mathcal G_{\max}}

\newcommand{\Hy}{\mathfrak H_y}

\title[Tangent classes for matroid building sets]{Tangent classes for matroid building sets}
\author{Ronnie Cheng}
\address{Department of Mathematics, Stanford University, USA}
\email{rtcheng@stanford.edu}
\date{\today}

\begin{document}

\begin{abstract}
Significant combinatorial constraints and structure on matroids can come from notions in algebraic geometry, even without the matroids themselves being representable.

Let \(M\) be a loopless matroid on a finite ground set \(E\), and let \(\G\) be a building set containing the top flat \(E\). We define a tangent class \(T_{M,\G}\) in the \(K\)-ring \(K(M,\G)\), which extends the tangent bundle class of the de Concini--Procesi wonderful model from realizable matroids to arbitrary matroids with building sets.

The class \(T_{M,\G}\) satisfies a matroidal Hirzebruch--Riemann--Roch package. More precisely, its Hirzebruch class
\[
\operatorname{ch}(\lambda_y T_{M,\G}^{\vee})\operatorname{td}(T_{M,\G})
\]
specializes to the Todd class and computes the Chow polynomial of \((M,\G)\). In the realizable case, these identities agree with the usual tangent-bundle computations on the corresponding wonderful model.

As an application, we prove Chern-number inequalities for \(T_{M,\G}\), including a Miyaoka--Yau type inequality with respect to the hyperplane class.
\end{abstract}

\maketitle
\tableofcontents
\section{Introduction}

Building sets are combinatorial data that govern wonderful compactifications.
In the de Concini--Procesi construction \cite{DCP}, the choice of a building
set specifies which strata of an arrangement are successively resolved;
changing this choice gives different smooth compactifications of the same
arrangement complement.  This point of view encompasses several familiar
compactifications.  For example, the Fulton--MacPherson compactification of
configuration spaces can be viewed, in the language of wonderful
compactifications, as the model associated to a natural building set of
diagonals \cite{FultonMacPherson}. 

A key feature of these models is that much of their intersection theory is combinatorial.  If a matroid \(M\) is realized by a linear subspace
\(L\subseteq \Bbbk^E\), then the Chow ring, and likewise the \(K\)-ring, of
the corresponding wonderful model admit presentations depending only on the
linear-dependence data encoded by \(M\) and on the chosen building set, rather
than on the particular realization \(L\).  Consequently, the same
presentations make sense for arbitrary matroids, even when no realization or wonderful compactification exists.  This is the
guiding principle behind the matroidal Hodge-theoretic program initiated in
\cite{AHK}: one can extract algebro-geometric structures from realizable
arrangements, reformulate them purely in terms of matroids, and then ask
which geometric theorems continue to hold in this combinatorial setting.  The
purpose of the present paper is to pursue this philosophy for arbitrary
building sets and for the structure that, in the realizable case, comes from
the tangent bundle of a wonderful model.

Let \(M\) be a loopless matroid of rank \(r\) on a finite ground set \(E\),
and let \(\G\) be a building set that contains the top flat \(E\).  Associated
to \((M,\G)\) is the nested-set fan \(\Sigma_{M,\G}\) and the corresponding
smooth toric variety
\[
   X_{M,\G}:=X_{\Sigma_{M,\G}}.
\]
When \(M\) is realizable by a linear subspace \(L\subseteq \Bbbk^E\), the
building set \(\G\) also determines the de Concini--Procesi wonderful model
\(W_{L,\G}\) \cite{DCP}, which is a closed subvariety of \(X_{M,\G}\).
Moreover, the Feichtner--Yuzvinsky presentation identifies the Chow ring of
\(X_{M,\G}\) with the Chow ring of \(W_{L,\G}\) \cite{FY}, and the analogous
statement for \(K\)-rings is proved in \cite{LLPP}.  We write
\[
   \A(M,\G)
   \qquad\text{and}\qquad
   \K(M,\G)
\]
for these common Chow and \(K\)-rings.

For the maximal building set \(\Gmax\) (cf. Section~\ref{subsec:FY-rings}), a
tangent class
\[
   T_M\in \K(M,\Gmax)
\]
was constructed in \cite{ChengTangent}.  In the realizable case, this class
recovers the tangent bundle of the maximal de Concini--Procesi wonderful
model.  For arbitrary matroids, it realizes the Todd and Hirzebruch classes
appearing in the matroidal Hirzebruch--Riemann--Roch formula.  The purpose
of this paper is to extend this construction from \(\Gmax\) to every
building set \(\G\).

We first construct the Hirzebruch--Riemann--Roch package formally, without
assuming the existence of a tangent class.  Since the nested-set fan for
\(\Gmax\) refines the nested-set fan for \(\G\), there is a proper toric
morphism
\[
   \rho_{\Gmax,\G}:X_{M,\Gmax}\longrightarrow X_{M,\G}.
\]
One may therefore define the Euler characteristic on \((M,\G)\) by pulling
back to the maximal building set:
\[
   \chi_{M,\G}(\xi)
   :=
   \chi_{M,\Gmax}\bigl((\rho_{\Gmax,\G})^*\xi\bigr),
   \quad \text{for all }
   \xi\in\K(M,\G).
\]
By Poincar\'e duality for Chow rings of matroids with building sets
\cite{ADH23,PP}, there is a unique class
\[
   \Td_{M,\G}\in \A(M,\G)_\Q
\]
characterized by the Hirzebruch--Riemann--Roch formula
\[
   \chi_{M,\G}(\xi)
   =
   \deg_{M,\G}\bigl(\ch(\xi)\Td_{M,\G}\bigr),
   \qquad
   \xi\in\K(M,\G), 
\] where the Chern character map \(\ch:\K(M,\G)_\Q\xrightarrow{\sim}\A(M,\G)_\Q \) is an isomorphism since the variety \(X_{M, \G}\) is smooth.

The projection formula shows that the Todd class is given by the
pushforward
\[
   \Td_{M,\G}
   =
   (\rho_{\Gmax,\G})_*\td(T_{M,\Gmax}),
\]
where \(T_{M,\Gmax}=T_M\).

Similarly, by descending from
\(\Gmax\) to \(\G\) one flat at a time, one defines a Hirzebruch class 
\[
   \Hy(M,\G)\in \A(M,\G)_\Q[y].
\]
Its construction is given in Section~\ref{sec:formal-HRR-and-realization};
its specialization at \(y=0\) is the formal Todd class:
\[
   \Hy(M,\G)|_{y=0}=\Td_{M,\G}.
\]

This class satisfies the degree identity
\[
   \deg_{M,\G}\bigl(\Hy(M,\G)|_{y=-t}\bigr)
   =
   H_M^\G(t):=\sum_p\dim_\Q\A^p(M,\G)t^p.
\]
The substitution \(y=-t\) mirrors the classical fact that, for a smooth
projective variety whose cohomology is concentrated in Hodge bidegrees
\((p,p)\), the \(\chi_y\)-genus becomes the Chow polynomial after setting
\(y=-t\); this comparison is recalled in Section~\ref{sec:formal-HRR-and-realization}. 

This provokes the question: is this formal package realized by some meaningful tangent class?  We answer this affirmatively.  The construction is guided by the behavior of ordinary tangent bundles under blow-ups.  Namely, let
\[
   \rho:\widetilde X=\Bl_ZX\longrightarrow X
\]
be the blow-up of a smooth variety along a smooth complete intersection
\[
   Z=D_1\cap\cdots\cap D_\ell,
\]
with exceptional divisor \(E_{\mathrm{exc}}\) and strict transforms
\(\widetilde D_i\).  Aluffi's formula \cite{AluffiBlowups} gives the
following correction term in \(K\)-theory:
\[
   T_{\widetilde X}-\rho^*T_X
   =
   [\OO(E_{\mathrm{exc}})]-[\OO]
   +
   \sum_{i=1}^{\ell}
      \left(
         [\OO(\widetilde D_i)]
         -
         [\OO(\widetilde D_i+E_{\mathrm{exc}})]
      \right).
\]
When \(M\) is realizable by a linear subspace \(L\), this is exactly the type
of correction term that appears at each step of the de Concini--Procesi
iterated blow-up construction of \(W_{L,\G}\) starting from \(\PP(L)\). For an arbitrary matroid, there may be no variety \(W_{L,\G}\), but the same
one-step formula has a purely combinatorial interpretation in the \(K\)-rings.  We therefore use it to define, for every
building set \(\G\), a class
\[
   T_{M,\G}\in \K(M,\G)
\] even when \(M\) is not realizable by any field.

The theorem below says that this class realizes the formal Todd and
Hirzebruch classes in the same way that an ordinary tangent bundle does.

\begin{mainthm}[Theorem~\ref{thm:tangent-realization}]
For every loopless matroid \(M\) on ground set \(E\) and every building set
\(\G\) that contains the top flat \(E\),
\[
   \Hy(M,\G)
   =
   \ch(\lambda_yT_{M,\G}^{\vee})\td(T_{M,\G})
   \qquad\text{in }\A(M,\G)_\Q[y].
\]
In particular, specializing at \(y=0\) gives
\[
   \Td_{M,\G}=\td(T_{M,\G}),
\]
and applying \(\deg_{M,\G}\) after the substitution \(y=-t\) gives
\[
   \deg_{M,\G}
   \left(
      \ch(\lambda_yT_{M,\G}^\vee)\td(T_{M,\G})
   \right)\bigg|_{y=-t}
   =
   H_M^\G(t).
\]
Thus the Chow polynomial is recovered as the Hirzebruch genus of the tangent
class \(T_{M,\G}\).
\end{mainthm}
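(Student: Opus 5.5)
We argue by induction along a chain of building sets from \(\Gmax\) down to \(\G\), removing one flat at a time. The base case \(\G=\Gmax\) is the result of \cite{ChengTangent}: there \(T_{M,\Gmax}=T_M\) and \(\Hy(M,\Gmax)=\ch(\lambda_yT_M^\vee)\td(T_M)\). By the standard result on building sets, we choose a sequence \(\Gmax=\G_0\supset\G_1\supset\cdots\supset\G_k=\G\) of building sets, each containing \(E\), with \(\G_{j}=\G_{j+1}\sqcup\{F_j\}\). Each step \(\rho_j:X_{M,\G_j}\to X_{M,\G_{j+1}}\) is then a single stellar subdivision, that is, a toric blow-up along the torus-invariant smooth subvariety \(Z_j=D_{G_1}\cap\cdots\cap D_{G_\ell}\). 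Here \(G_1,\dots,G_\ell\) are the \(\G_{j+1}\)-factors of \(F_j\), and the intersection is a complete intersection. By construction, \(T_{M,\G_j}\) and \(T_{M,\G_{j+1}}\) are related by the combinatorial Aluffi correction
\[
T_{M,\G_j}-\rho_j^*T_{M,\G_{j+1}}=[\OO(E_j)]-[\OO]+\sum_{i=1}^{\ell}\bigl([\OO(\widetilde D_{G_i})]-[\OO(\widetilde D_{G_i}+E_j)]\bigr),
\]
where \(E_j=D_{F_j}\). The Hirzebruch class \(\Hy(M,\G_{j+1})\) is defined in Section~\ref{sec:formal-HRR-and-realization} by descending from \(\Hy(M,\G_j)\), and I expect this definition to be the pushforward \((\rho_j)_*\Hy(M,\G_j)\). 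The inductive step therefore reduces to the identity
\[
(\rho_j)_*\bigl(\ch(\lambda_yT_{M,\G_j}^\vee)\td(T_{M,\G_j})\bigr)=\ch(\lambda_yT_{M,\G_{j+1}}^\vee)\td(T_{M,\G_{j+1}}).
\]

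For the key computation, write \(\mathcal H_y(V)=\ch(\lambda_yV^\vee)\td(V)\). This is multiplicative, so we can write \(\mathcal H_y(T_{M,\G_j})=\rho_j^*\mathcal H_y(T_{M,\G_{j+1}})\cdot \Phi\). The factor \(\Phi\) is the explicit product of the one-variable power series \(Q_y(x)=x(1+ye^{-x})/(1-e^{-x})\) evaluated on the classes \([E_j]\), \([\widetilde D_{G_i}]\) and \([\widetilde D_{G_i}]+[E_j]\), with exponent \(+1\) or \(-1\) as in the correction, together with \(Q_y(0)^{-1}\). By the projection formula it then suffices to prove \((\rho_j)_*\Phi=1\) in \(\A(M,\G_{j+1})_\Q[y]\).

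We now use the Chow-ring presentation of the stellar subdivision. The pullback satisfies \(\rho_j^*D_{G_i}=\widetilde D_{G_i}+E_j\). Also \(\A(M,\G_j)\) is generated over \(\A(M,\G_{j+1})\) by \(e=[E_j]\), with \(e\cdot\widetilde D_{G_i}\) relations making \(E_j\) behave as the projectivized normal bundle \(\PP(\oplus_i\OO(D_{G_i})|_{Z})\). Writing \(\Phi-1\) as a sum of terms each divisible by \(e\), the pushforward is computed through the Segre-class formula \((\rho_j)_*(e^{m})=\pm\, [Z]\cdot s_{m-\ell}(N)\) on the (combinatorial) exceptional divisor. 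The vanishing \((\rho_j)_*\Phi=1\) then becomes a residue identity in the variables \(d_i=[D_{G_i}]\). This identity is exactly the one that holds geometrically for blow-ups of smooth complete intersections, namely the birational invariance of the \(\chi_y\)-genus and, more precisely, of pushforwards of Hirzebruch classes. Since it is a universal formal identity of power series, it can be verified on a universal toric model such as the blow-up of \(\mathbb A^\ell\) at the origin, where the geometric statement is known. The specializations then follow: setting \(y=0\) gives \(\Td_{M,\G}=\td(T_{M,\G})\), and the degree statement follows from the degree identity for \(\Hy(M,\G)\) stated earlier.

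The main obstacle is this pushforward identity \((\rho_j)_*\Phi=1\). One issue is justifying that the combinatorial pushforward in \(\A(M,\G)\) obeys the same blow-up formulas as the geometric ones, even though \(\A(M,\G)\) is only a quotient of the toric Chow ring. I would first try working on the smooth toric variety \(X_{M,\G}\) itself, where \(\rho_j\) is a genuine blow-up along the complete intersection \(Z_j\), so that Aluffi's formula and the birational invariance hold geometrically. The identity can then be transported along the surjection to \(\A(M,\G)\). This transport needs the pushforward to be compatible with the Feichtner--Yuzvinsky quotient, which I would check on generators. A second issue is ensuring that the chain of single-flat removals exists and matches the definition of \(\Hy\); I would take this from Section~\ref{sec:formal-HRR-and-realization}.
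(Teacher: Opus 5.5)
There is a genuine gap, and it sits exactly at the step you single out as the main obstacle. The recursion defining $\Hy$ is not a pure pushforward. For a one-step enlargement $\B^+=\B\cup\{F\}$ with $\ell=|\Fac_\B(F)|$, the paper sets (see \eqref{eq:Hy-recursion})
\[
   \Hy(M,\B)=(\rho_F)_*\Hy(M,\B^+)-q_\ell(y)\,(\iota_F)_*\bigl(\Hy(M|F,\B|F)\boxtimes\Hy(M/F,\B/F)\bigr),
   \qquad q_\ell(y)=(-y)+\cdots+(-y)^{\ell-1}.
\]
The correction term is unavoidable, because Hirzebruch classes and the $\chi_y$-genus are \emph{not} birational invariants. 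For example, $\chi_y(\PP^2)=1-y+y^2$ but $\chi_y(\Bl_p\PP^2)=1-2y+y^2$. Your key claim $(\rho_j)_*\Phi=1$ is therefore false whenever $\ell\ge 2$ and $y\neq 0$. What \cite[Example~3.3]{BSY} together with Aluffi's formula actually gives is
\[
   \rho_*\Phi_y(R_\rho)=1+q_\ell(y)\,\iota_*\Phi_y(-N),\qquad N=\textstyle\bigoplus_i\OO_Z(D_i).
\]
As written, your argument proves only the $y=0$ statement $\Td_{M,\G}=\td(T_{M,\G})$, since $q_\ell(0)=0$. It also contradicts the degree formula you invoke at the end. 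A pure pushforward would leave $\deg\Hy|_{y=-t}$ unchanged at each step, whereas by \cite[Theorem~5.1]{EFMPV} the Chow polynomial jumps by $(t+\cdots+t^{\ell-1})H^{\B|F}_{M|F}(t)H^{\B/F}_{M/F}(t)$.

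With the correct one-step identity (Lemma~\ref{lem:universal-one-step-HRR}) and the projection formula, you get $(\rho_F)_*\Phi_y(T_{M,\B^+})=\Phi_y(T_{M,\B})+q_\ell(y)(\iota_F)_*\Phi_y(\iota_F^*T_{M,\B}-N_F)$. To match this with the recursion you need a second ingredient that your proposal lacks entirely: the star-normal identity (Proposition~\ref{prop:center-tangent-factorization})
\[
   \iota_F^*T_{M,\B}=T_{M|F,\B|F}\boxplus T_{M/F,\B/F}+N_F,\qquad N_F=\textstyle\bigoplus_{i=1}^{\ell}\OO_{Z_F}(\iota_F^*x^\B_{F_i}),
\]
which holds in $\K(Z_F)\simeq\K(M|F,\B|F)\otimes\K(M/F,\B/F)$. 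The paper proves it in two steps. First, it restricts every $x_G^\B$ and every cutting class $\theta_G^\B$ to the star; these become left-factor classes, the normal classes $n_i$ and hyperplane classes $\alpha_i$, or right-factor classes. Second, it cancels the leftover rank-shift terms against $-\sum_i\rk(F_i)[\OO(\alpha_i+n_i)]$, using a chain telescoping built on the $K$-theoretic Stanley--Reisner relations of the contraction factor. Multiplicativity of $\Phi_y$, plus a separate induction on rank for the $M|F_i$ and $M/F$, then identifies the correction term with the one in the recursion.

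Two smaller points. The Aluffi relation between $T_{M,\B^+}$ and $\rho_F^*T_{M,\B}$ is not ``by construction'', because $T_{M,\G}$ is defined intrinsically through the cutting classes $\theta_F^\G$. Proposition~\ref{prop:relative-Aluffi} needs three inputs: the pullback rule $\rho_F^*\theta_K^\B=\theta_K^{\B^+}$, the cancellation $[\OO(A_i+x_F)]-[\OO(A_i)]=[\OO(\theta_F^{\B^+}+x_F)]-[\OO(\theta_F^{\B^+})]$, and $\rk(F)=\sum_i\rk(F_i)$. Your worry about transporting the identity along a quotient, by contrast, is unnecessary. $\A(M,\G)$ is the Chow ring of the smooth toric variety $X_{M,\G}$ itself, on which $\rho_F$ is an honest blow-up along a complete intersection.
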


The proof has two main ingredients.  First, the intrinsic definition of
\(T_{M,\G}\) satisfies the relative Aluffi formula for every one-step
enlargement of building sets.  Second, the restriction of \(T_{M,\G}\) to
the one-step blow-up center satisfies the star-normal identity
\[
   \iota_F^*T_{M,\B}
   =
   T_{M|F,\B|F}\boxplus T_{M/F,\B/F}+N_F.
\]
Together these two identities allow the universal blow-up formula for
Hirzebruch classes to match exactly the recursive definition of
\(\Hy(M,\G)\).  The proof is entirely formal in the Chow and \(K\)-rings, and
therefore does not require a realization of \(M\).

The realization theorem also gives concrete numerical consequences.  In the
final subsection, we use the tangent class to identify the canonical class,
prove a formal Serre duality statement, and derive several Chern-number
identities.  Let \( \alpha\in \A^1(M,\G)\) denote the hyperplane class, and let \(d=r-1\).  The numbers
\[
   \int_{M,\G} c_k(T_{M,\G})\alpha^{d-k}
\]
are the matroidal analogues of polarized Chern numbers.  The main numerical
consequence is the following comparison with projective space.  Indeed, for
\(\PP^d\), \[
   \int_{\PP^d}c_k(T_{\PP^d})\alpha^{d-k}
   =
   \binom{d+1}{k}.
\]

\begin{maincor}[Corollary~\ref{cor:Chern-alpha-positive}]
Let \(d=r-1\).  For every \(0\le k\le d\),
\[
   \int_{M,\G}c_k(T_{M,\G})\alpha^{d-k}
   \ge
   \binom{d+1}{k}.
\]
\end{maincor}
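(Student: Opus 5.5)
We reduce the inequality to a comparison between \(T_{M,\G}\) and the pullback of the tangent class of projective space. The hyperplane class \(\alpha\) is pulled back from the toric map \(X_{M,\G}\to\PP^{n-1}\) (equivalently, from the minimal building set), and \(\deg_{M,\G}(\alpha^d)=1\). The plan is to write
\[
   T_{M,\G} \;=\; (d+1)[\LL_\alpha]-[\OO] + \Delta,
\]
where \((d+1)[\LL_\alpha]-[\OO]\) is the formal analogue of the Euler sequence for \(\PP(L)=\PP^d\). Here \(\Delta\) is the sum of the one-step Aluffi correction terms
\[
   [\OO(E_F)]-[\OO]+\sum_i\bigl([\OO(\widetilde D_i)]-[\OO(\widetilde D_i+E_F)]\bigr),
\]
accumulated along a sequence of enlargements from the minimal building set (flats \(\{E\}\), giving \(\PP^d\) with \(T=(d+1)\LL_\alpha-\OO\)) up to \(\G\). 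This decomposition comes directly from the relative Aluffi formula stated before Theorem~\ref{thm:tangent-realization}.

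Taking total Chern classes gives
\[
   c(T_{M,\G})=(1+\alpha)^{d+1}\,c(\Delta).
\]
Hence
\[
   \int c_k(T_{M,\G})\alpha^{d-k}
   =\sum_{j}\binom{d+1}{k-j}\int c_j(\Delta)\alpha^{d-j}.
\]
The \(j=0\) term is \(\binom{d+1}{k}\). It therefore suffices to show that
\[
   \int_{M,\G}c_j(\Delta)\alpha^{d-j}\ \ge 0\qquad\text{for } j\ge1.
\]

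Rather than attack \(c(\Delta)\) globally, we prove the inequality inductively along the chain of one-step enlargements \(\B\subset\B'=\B\cup\{F\}\). Suppose the claim \(\int c_k(T_{M,\B})\alpha^{d-k}\ge\binom{d+1}{k}\) holds for \(\B\). Since \(\alpha\) is pulled back along \(\rho:X_{M,\B'}\to X_{M,\B}\) and \(\rho_*1=1\), we get
\[
   \int_{\B'}c_k(T_{M,\B'})\alpha^{d-k}-\int_{\B}c_k(T_{M,\B})\alpha^{d-k}
   =\int_{\B'}\bigl(c_k(T_{M,\B'})-\rho^*c_k(T_{M,\B})\bigr)\alpha^{d-k}.
\]
The difference \(c(T_{\B'})-\rho^*c(T_{\B})\) is supported on the exceptional divisor \(E_F\). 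So this integral equals an integral over the center \(E_F\) of a class expressible through the normal bundle data. By the star-normal identity
\[
   \iota_F^*T_{M,\B}=T_{M|F,\B|F}\boxplus T_{M/F,\B/F}+N_F,
\]
this center is the product of the \((M|F,\B|F)\) and \((M/F,\B/F)\) pieces. The key point is that \(\alpha\) restricted to the center is the hyperplane class of the \(M/F\) factor when \(F\neq E\). Because \(\alpha\) restricts to \(F\) only through the contraction side, \(\alpha^{d-k}\) kills every term except those of top degree on the \(M|F\) factor. This leaves an expression of the shape
\[
   \sum \deg_{M/F}\bigl(c_\bullet(T_{M/F})\alpha^{\bullet}\bigr)\cdot \deg_{M|F}\bigl(\text{(polynomial in } c(T_{M|F}), c(N_F))\bigr).
\]
For the blow-up of a codimension-\(m\) center, this is the classical formula: the change in \(c_k\) paired with a pulled-back class involves the coefficients \(\binom{m}{i}-\binom{m}{i}\)-type Porteous expressions.

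The main obstacle is the sign of this local correction term. We would first test the case where the center is a point in the \(M|F\)-direction, i.e.\ \(\B|F\) trivial and \(\PP^{\rk F-1}\)-like. There the local term reduces to
\[
   \int_{M/F}c_{k-?}(T_{M/F})\alpha^{\cdots}\quad\text{times a positive binomial coefficient},
\]
which is nonnegative by induction on rank applied to \(M/F\). In general we would use the fact that \(\deg_{M|F}\) of top Chern data of \(T_{M|F,\B|F}\) is the (positive) Euler characteristic / leading coefficient of the Chow polynomial. This follows from Theorem~\ref{thm:tangent-realization} at \(t\)-extremal coefficients, giving \(\int c_{top}(T)=H(1)>0\). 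If the mixed terms do not separate cleanly, the fallback is to prove directly that the polynomial \(\sum_k t^k\int c_k(T_{M,\G})\alpha^{d-k}\) equals a positive combination of \((1+t)^{d+1}\) and products of such polynomials for minors. We would obtain this by recursion over flats using the Hirzebruch-class recursion defining \(\Hy(M,\G)\), and then conclude coefficientwise.
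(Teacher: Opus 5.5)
\paragraph{Your first reduction aims at a false inequality.} The expansion itself is fine, but put $\Delta:=T_{M,\G}-(d+1)[\OO(\alpha)]+[\OO]$ and take $M=U_{4,4}$ on $E=\{1,2,3,4\}$ with $\G=\{\{1\},\{2\},\{3\},\{4\},\{1,2\},E\}$. Then $X_{M,\G}=\Bl_\ell\PP^3$ for a coordinate line $\ell$, with $\alpha=h$ and $x_{\{1,2\}}=e$. The atom factors in \eqref{eq:intrinsic-cT} are trivial, so
\[
   c(\Delta)=(1+e)\Bigl(\frac{1+h-e}{1+h}\Bigr)^{2},
   \qquad
   [c(\Delta)]_3=e^3-2eh^2 .
\]
Since $\int e^3=-2$ and $\int eh^2=0$, this gives $\int c_3(\Delta)=-2<0$. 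More generally, in rank $4$ one finds $\int c_3(\Delta)=2(n_3-n_2)$, where $n_i$ is the number of rank-$i$ flats in $\G$. The numbers $\int c_j(\Delta)\alpha^{d-j}$ are alternating higher differences of Euler characteristics of truncations and have no sign. So $\binom{d+1}{k}$ does not appear as a leading term plus nonnegative corrections in this decomposition.

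\paragraph{The pivot proves a different statement and has two gaps.} Induction along one-step enlargements establishes monotonicity in the building set, which is true (cf.\ Corollary~\ref{cor:Chern-alpha-monotone}), not the bound itself.

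First, the sign of the local term is exactly what you leave open. The factor separation you describe also does not occur as written: since $n_i=\alpha^R-\alpha_i-S_F^R$, Segre-type expressions in $c(N_F)$ mix the two factors. The missing ingredient is the total-Chern-class blow-up identity
\[
   (\rho_F)_*c(T_{M,\B^+})
   =
   c(T_{M,\B})+(\ell-1)(\iota_F)_*c\bigl(T_{M|F,\B|F}\boxplus T_{M/F,\B/F}\bigr).
\]
It follows from Lemma~\ref{lem:universal-one-step-HRR} and Proposition~\ref{prop:center-tangent-factorization} by passing to the normalized Hirzebruch class and setting $y=-1$. It eliminates $N_F$ and shows that the local term equals $(\ell-1)\,H^{\B|F}_{M|F}(1)\int_{M/F,\B/F}c_{k-\rk F}(T_{M/F,\B/F})\alpha^{d-k}$, which is $\ge0$.

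Second, there is no base case. For $r\ge2$, $\{E\}$ is not a building set, because every building set contains all flats $F$ with $M|F$ connected. One-step chains therefore start at the minimal building set, whose model is in general not $\PP^d$; for $M(K_4)$ it is $\PP^2$ blown up at four points. Monotonicity alone never reaches $\binom{d+1}{k}$.

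\paragraph{How the paper avoids both issues.} It uses truncation (Proposition~\ref{prop:tangent-truncation}):
\[
   \int_{M,\G}c_k(T_{M,\G})\alpha^{d-k}
   =
   \sum_{j=0}^{k}\binom{d-k}{k-j}\int_{M^{(k)},\G^{(k)}}c_j(T_{M^{(k)},\G^{(k)}})\alpha^{k-j}.
\]
The $j=k$ term is the top Chern number $H^{\G^{(k)}}_{M^{(k)}}(1)\ge k+1$, by Theorem~\ref{thm:tangent-realization} at $y=-1$. The remaining terms are bounded by induction on $k$, and Vandermonde's identity gives $\binom{d+1}{k}$.
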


Thus the matroidal tangent class has polarized Chern numbers at least as
large as those of projective space. 

Together with the Todd-class identities obtained from formal Hirzebruch--Riemann--Roch, the case \(k=2\) gives a Miyaoka--Yau type inequality with respect to the hyperplane class:
\[
   d
   \int_{M,\G}
      c_1(T_{M,\G})^2\alpha^{d-2}
   \le
   2(d+1)
   \int_{M,\G}
      c_2(T_{M,\G})\alpha^{d-2}.
\]
This has the same numerical form as the higher-dimensional Miyaoka--Yau
inequality with respect to a polarization; compare
\cite[Theorem~1.3]{GrebKebekusTaji}.

\subsection*{Organization of the paper}

Section~\ref{sec:prelim} recalls the necessary background on building sets,
nested-set fans, Feichtner--Yuzvinsky rings, de Concini--Procesi wonderful
models, and one-step star centers.  Section~\ref{sec:Aluffi-definition}
defines the tangent class \(T_{M,\G}\) and proves its relative Aluffi
formula.  Section~\ref{sec:formal-HRR-and-realization} constructs the formal
Hirzebruch--Riemann--Roch package and proves that it is realized by
\(T_{M,\G}\).  The final subsection records several corollaries, including the
canonical class formula, formal Serre duality, Chern-number inequalities,
and the Miyaoka--Yau type inequality with respect to \(\alpha\).

After the results of this paper had been obtained, but before the present
draft was posted, the problem was used as a case study in an experiment on
AI-assisted mathematical reasoning.  The resulting companion drafts present
the solution generated in that experiment and document the AI system used to
produce it \cite{DanusTangentClasses,Danus}.

\section*{Acknowledgements}

The author is grateful to Matt Larson for suggesting the strategy of pushing forward from the maximal building set.  The author
also thanks Shiyue Li for asking whether the tangent-class construction
extends from the maximal building set to arbitrary building sets.
\section{Preliminaries}\label{sec:prelim}

Throughout the paper, \(M\) is a loopless matroid on a finite ground set
\(E\), and \(r:=\rk(E)\).  We write \(\LL(M)\) for the lattice of flats of
\(M\), with \(\hat 0=\emptyset\) and \(\hat 1=E\). 

\subsection{Building sets and nested-set fans}\label{subsec:buildingset}

A subset
\[
   \G\subseteq \LL(M)\setminus\{\emptyset\}
\]
is a \emph{building set} if, for every nonempty flat \(F\), the maximal elements of
\(\G\) contained in \(F\), say
\[
   G_1,\ldots,G_s,
\]
induce the product decomposition
\[
   [\hat 0,F]
   \simeq
   [\hat 0,G_1]\times\cdots\times[\hat 0,G_s].
\]
These maximal elements are called the \(\G\)-factors of \(F\), and we denote
this set by
\[
   \Fac_{\G}(F)=\{G_1,\ldots,G_s\}.
\]
This is the standard building-set notion for lattices; see
\cite[Definition~1]{FY} and \cite[Definition~2.1]{FM}. In this paper, we additionally require the top flat \(\hat 1=E\) to be part of the building set, and we assume all building sets in the paper contain \(E\).

A subset \(\mathcal S\subseteq \G\setminus\{E\}\) is \emph{\(\G\)-nested} if every
collection of pairwise incomparable elements
\(S_1,\ldots,S_m\in\mathcal S\), with \(m\ge2\), has join not in \(\G\):
\[
   S_1\vee\cdots\vee S_m\notin\G.
\]

Let
\[
   N_E:=\Z^E/\Z(1,\ldots,1).
\]
For a flat \(F\), let \(v_F\in N_E\) be the image of
\(\sum_{i\in F}e_i\).  The nested-set fan of \((M,\G)\) is
\[
   \Sigma_{M,\G}
   :=
   \left\{
      \cone(v_F:F\in\mathcal S)
      \;\middle|\;
      \mathcal S\text{ is }\G\text{-nested}
   \right\}.
\]
The fan is unimodular.  Its support is the Bergman fan of \(M\), and
varying the building set gives subdivisions of the same support; see
\cite[Theorem~4.1]{FS}.  We write
\[
   X_{M,\G}:=X_{\Sigma_{M,\G}}
\]
for the associated smooth toric variety.

\subsection{The realizable de Concini--Procesi model}
\label{subsec:realizable-DCP}

Suppose that \(M\) is realized by a linear subspace
\(L\subseteq \Bbbk^E\).  Let \(T_E\subseteq \PP^{E-1}\) be the dense
projective torus and set
\[
   U_L:=\PP(L)\cap T_E.
\]
For a building set \(\G\), the de Concini--Procesi wonderful model
\(W_{L,\G}\) is the smooth compactification of \(U_L\) associated to
\(\G\); see \cite{DCP}. It can be constructed by iteratively blowing up the proper transforms of the arrangement strata indexed by flats in \(\G\setminus\{E\}\), in any order compatible with inclusion. The wonderful model \(W_{L,\G}\) sits naturally inside \(X_{M, \G}\) (see, e.g., \cite[Theorem~5.23]{Denham}).

\begin{theorem}\label{thm:wonderful-closure}
Let \(M\) be realized by \(L\subseteq\Bbbk^E\).  For every building set
\(\G\), the wonderful model \(W_{L,\G}\) is naturally isomorphic to the
closure of \(U_L\) in \(X_{M,\G}\):
\[
   W_{L,\G}\simeq \overline{U_L}^{\,X_{M,\G}}.
\]
\end{theorem}

If \(\G\subseteq\Hh\) are building sets, then the toric morphism
\(X_{M,\Hh}\to X_{M,\G}\) restricts, in the realizable case, to the usual
morphism \(W_{L,\Hh}\to W_{L,\G}\).

The rest of the paper uses the toric varieties \(X_{M,\G}\) to define Chow
rings, \(K\)-rings, pullbacks, pushforwards, and star restrictions for every
matroid.  In the realizable case, Theorem~\ref{thm:wonderful-closure} explains why
these maps are the same maps that occur on de Concini--Procesi wonderful
models.

\subsection{Feichtner--Yuzvinsky Chow rings and \(K\)-rings}
\label{subsec:FY-rings}

For a building set \(\G\), let \(x_F\) be a variable for
\(F\in\G\setminus\{E\}\). The Feichtner--Yuzvinsky Chow ring \cite{FY} is
\begin{equation}\label{eq:FY-ring}
   \A(M,\G)
   :=
   \Z[x_F:F\in\G\setminus\{E\}]/(I_\G+J_\G),
\end{equation}
where \(I_\G\) is generated by the monomials
\[
   x_{F_1}\cdots x_{F_m}
\]
for non-\(\G\)-nested collections \(F_1,\ldots,F_m\), and \(J_\G\) is
generated by the linear forms
\[
   \sum_{\substack{F\in\G\setminus\{E\}\\ i\in F}}x_F
   -
   \sum_{\substack{F\in\G\setminus\{E\}\\ j\in F}}x_F
   \qquad (i,j\in E).
\]
The ring \(\A(M, \G)\) identifies with the Chow ring of the smooth toric variety \(X_{M,\G}\).  When \(M\) is realizable, the same presentation identifies with the Chow ring of the corresponding wonderful model.

For the toric variety, \(x_F\) is the divisor class of the torus-invariant
divisor corresponding to the ray \(v_F\).  In the realizable iterated
blow-up model, \(x_F\) corresponds to the divisor created when the stratum
indexed by \(F\) is blown up, followed by taking its strict transform in
later stages.

The maximal building set is
\[
   \Gmax:=\LL(M)\setminus\{\emptyset\}.
\]
When \(\G=\Gmax\), the ring \(\A(M,\Gmax)\) is the usual Chow ring of a
matroid as in \cite[Definition~1.3]{AHK}.

The common value of the linear forms
\begin{equation}\label{eq:alpha-def}
   \alpha
   :=
   \sum_{\substack{F\in\G\setminus\{E\}\\ i\in F}}x_F
   \in\A^1(M,\G)
\end{equation}
is independent of \(i\in E\).  In realizable cases, \(\alpha\) is the pullback
of the hyperplane class from the initial projective space.

We write
\[
   \K(M,\G):=K_0(X_{M,\G})
\]
for the Grothendieck ring of vector bundles on \(X_{M,\G}\).  For a divisor
class \(D\in\A^1(M,\G)\), we write \(\OO(D)\) for the associated line bundle
and \([\OO(D)]\) for its class in \(\K(M,\G)\). When \(M\) is realizable, \(\K(M, \G)\) is also the \(K\)-ring of the corresponding wonderful model \cite[Proposition 1.13, Remark 4.4]{LLPP}. Since \(X_{M,\G}\) is smooth,
the Chern character gives an isomorphism \cite[Example 15.2.16]{Fulton}
\[
   \ch:\K(M,\G)_\Q\xrightarrow{\sim}\A(M,\G)_\Q .
\]
In addition, \cite{LLPP} constructs an integral isomorphism
\[
   \zeta_{M,\G}:\K(M,\G)\xrightarrow{\sim}\A(M,\G)
\]
and gives an explicit presentation of \(\K(M,\G)\).
\begin{lemma} \label{lem:ch-detects-K}
The abelian group \(\K(M,\G)\) is torsion-free. Consequently, the ordinary
Chern character
\[
   \ch:\K(M,\G)\to \A(M,\G)_\Q
\]
is injective. Hence two classes of \(\K(M,\G)\) are equal if and only if
they have the same Chern character.
\end{lemma}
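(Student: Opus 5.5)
The plan is to get torsion-freeness of \(\K(M,\G)\) from the integral isomorphism \(\zeta_{M,\G}:\K(M,\G)\xrightarrow{\sim}\A(M,\G)\) of \cite{LLPP} quoted above. It then suffices to show that \(\A(M,\G)\) is torsion-free, i.e.\ a free abelian group. For this I would use that \(\A(M,\G)\) is the Chow ring of the smooth toric variety \(X_{M,\G}\) attached to the unimodular fan \(\Sigma_{M,\G}\). Feichtner--Yuzvinsky \cite{FY} give an explicit Gröbner basis for the ideal \(I_\G+J_\G\) in \eqref{eq:FY-ring}, whose leading coefficients are all \(\pm1\). Hence \(\A(M,\G)\) has a \(\Z\)-basis of standard monomials \(x_{F_1}^{a_1}\cdots x_{F_k}^{a_k}\) indexed by nested sets with bounded exponents. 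A unimodular Gröbner basis over \(\Z\) yields a \(\Z\)-basis of the quotient, so \(\A(M,\G)\) is free of finite rank. Alternatively, the integral presentation of \(\K(M,\G)\) in \cite{LLPP} shows directly that it is free. In either version I would not re-prove the basis theorem, only cite it.

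For the second assertion, let \(\xi\in\K(M,\G)\) with \(\ch(\xi)=0\) in \(\A(M,\G)_\Q\). Since \(\ch:\K(M,\G)_\Q\to\A(M,\G)_\Q\) is an isomorphism \cite[Example 15.2.16]{Fulton}, the image \(\xi\otimes1\) of \(\xi\) in \(\K(M,\G)\otimes\Q\) vanishes. The kernel of \(\K(M,\G)\to\K(M,\G)\otimes\Q\) is the torsion subgroup, which is zero by the first part, so \(\xi=0\). Thus \(\ch\) is injective on \(\K(M,\G)\). The final sentence follows by applying this to a difference \(\xi-\xi'\), since \(\ch\) is additive.

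The only step with real content is freeness of \(\A(M,\G)\) over \(\Z\), as opposed to over \(\Q\). I expect this to be the main obstacle, and I would handle it by citing the integral Feichtner--Yuzvinsky monomial basis. The fallback is the general statement that the Chow ring of the smooth toric variety of a unimodular fan is torsion-free whenever its Stanley--Reisner-type presentation admits such a basis.
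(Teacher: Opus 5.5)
Your proposal is correct and matches the paper's argument: injectivity of \(\ch\) comes from factoring it as \(\K(M,\G)\to\K(M,\G)_\Q\xrightarrow{\sim}\A(M,\G)_\Q\) and noting that the first map is injective on a torsion-free group. The paper simply asserts torsion-freeness, implicitly relying on the integral isomorphism \(\zeta_{M,\G}\) and presentation from \cite{LLPP}; your reduction to the integral Feichtner--Yuzvinsky monomial basis of \(\A(M,\G)\) supplies the justification the paper leaves to the references.
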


\begin{proof}
The rational Chern character becomes an isomorphism after tensoring with
\(\Q\).  Since \(\K(M,\G)\) is torsion-free, the natural map
\(\K(M,\G)\to\K(M,\G)_\Q\) is injective.
\end{proof}

We will also use the following \(K\)-theoretic form of the
Stanley--Reisner relations.

\begin{lemma}[\(K\)-theoretic Stanley--Reisner relation]
\label{lem:K-SR}
If \(S_1,\ldots,S_m\in\G\setminus\{E\}\) are not \(\G\)-nested, then
\[
   \prod_{j=1}^m
   \left(1-[\OO(-x_{S_j})]\right)
   =0
   \qquad\text{in }\K(M,\G).
\]
\end{lemma}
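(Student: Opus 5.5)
The plan is to use the toric geometry of \(X_{M,\G}\) directly.

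\emph{Step 1: identify \(1-[\OO(-x_S)]\) with a structure sheaf.} For each \(S\in\G\setminus\{E\}\), let \(D_S\subseteq X_{M,\G}\) be the torus-invariant prime divisor attached to the ray \(v_S\), so \(x_S=[D_S]\). The ideal sheaf sequence
\[
   0\to\OO(-D_S)\to\OO\to\OO_{D_S}\to0
\]
gives
\[
   1-[\OO(-x_S)]=[\OO_{D_S}]\qquad\text{in }\K(M,\G).
\]

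\emph{Step 2: multiply and use transversality.} Since \(X_{M,\G}\) is smooth, the invariant divisors form a simple normal crossings divisor. Hence the \(D_{S_j}\) meet transversally wherever they meet, all higher \(\mathcal{T}or\) sheaves vanish, and
\[
   \prod_j[\OO_{D_{S_j}}]=\bigl[\OO_{D_{S_1}\cap\cdots\cap D_{S_m}}\bigr].
\]
Concretely, \(\OO_{D_{S_1}}\otimes^{L}\cdots\otimes^{L}\OO_{D_{S_m}}\) can be computed locally by Koszul complexes on the regular sequence of local equations.

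\emph{Step 3: the intersection is empty.} By the orbit–cone correspondence, \(D_{S_1}\cap\cdots\cap D_{S_m}\) is nonempty exactly when the rays \(v_{S_1},\dots,v_{S_m}\) span a cone of \(\Sigma_{M,\G}\). The cones of \(\Sigma_{M,\G}\) are the cones \(\cone(v_F:F\in\mathcal S)\) with \(\mathcal S\) \(\G\)-nested.

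The one point to check is that \(\{v_{S_j}\}\) spanning a cone forces \(\{S_j\}\) itself to be nested. There are two sub-points:
\begin{enumerate}
\item If the \(S_j\) are not distinct, pass to the set of distinct elements. Repeated factors do not change the intersection, so the intersection is the same.
\item Distinct flats give distinct rays, and the rays of \(\Sigma_{M,\G}\) are exactly the \(v_F\) with \(F\in\G\setminus\{E\}\). Any face of a simplicial cone \(\cone(v_F:F\in\mathcal S)\) is spanned by a subset of its rays, and subsets of nested sets are nested.
\end{enumerate}
So a non-nested collection indexes no cone, the intersection is empty, its structure sheaf is \(0\), and the product vanishes.

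An alternative route would apply Lemma~\ref{lem:ch-detects-K}: expand \(\ch\) of the product as \(\prod_j(1-e^{-x_{S_j}})\), which is divisible by \(\prod_j x_{S_j}\in I_\G\), and conclude from injectivity of \(\ch\). This is cleaner, but it relies on Lemma~\ref{lem:ch-detects-K}, whose torsion-freeness claim itself rests on \cite{LLPP}. I would present the geometric argument and mention the Chern character check as confirmation.
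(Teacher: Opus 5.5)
Your proof is correct and is essentially the paper's argument. The paper cites the multiplicative Stanley--Reisner relation for smooth toric varieties (Vezzosi--Vistoli), uses the fact that the rays $v_{S_j}$ span a cone of $\Sigma_{M,\G}$ exactly when $\{S_j\}$ is $\G$-nested, and notes that the Chern-character check also works; you instead derive that relation directly from $1-[\OO(-x_S)]=[\OO_{D_S}]$ and the orbit--cone correspondence. One small tidy-up: the transversality identity in Step~2 fails for repeated $S_j$ (in general $[\OO_D]^2=[\OO_D]-[\OO_D(-D)]\neq[\OO_D]$), so make the reduction to distinct $S_j$ at the level of the product rather than only the intersection --- the product over the distinct $S_j$ already vanishes and divides the full product.
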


\begin{proof}
This is the multiplicative Stanley--Reisner relation in the standard
presentation of the Grothendieck ring of a smooth toric variety; see, for
example, \cite[Theorem~6.4]{VezzosiVistoli}.  The rays indexed by
\(S_1,\ldots,S_m\) span a cone of \(\Sigma_{M,\G}\) if and only if
\(\{S_1,\ldots,S_m\}\) is \(\G\)-nested.  If the collection is not nested, the corresponding product vanishes. The lemma can also be seen from taking Chern character of the class.
\end{proof}

\subsection{One-step enlargements and star centers}
\label{subsec:onestep}

A \emph{one-step enlargement} is an inclusion \(\B^+=\B\cup\{F\}\), where both \(\B\) and \(\B^+\) are building sets.

Whenever this notation is used, we set
\begin{equation}\label{eq:onestep-notation}
   \Fac_\B(F)=\{F_1,\ldots,F_\ell\},
   \qquad
   \sigma_F^\B:=\cone(v_{F_1},\ldots,v_{F_\ell}).
\end{equation}
The factorization of the interval \([\emptyset,F]\) gives
\begin{equation*}
   \rk(F)=\sum_{i=1}^{\ell}\rk(F_i).
\end{equation*}

Every inclusion \(\G\subseteq\Hh\) of building sets can be connected by a
sequence
\[
   \G=\G_0\subset\G_1\subset\cdots\subset\G_N=\Hh
\]
in which each step is a one-step enlargement; see \cite[Theorem~4.2]{FM} or \cite[Proposition~5.2]{EFMPV}.

For a one-step enlargement \(\B^+=\B\cup\{F\}\), the fan
\(\Sigma_{M,\B^+}\) is the stellar subdivision of \(\Sigma_{M,\B}\) at the
cone \(\sigma_F^\B\). Write \[
   Z_F:=V(\sigma_F^\B)\subseteq X_{M,\B}, \quad \text{and } \iota_F:Z_F\hookrightarrow X_{M,\B}.
\] The map
\begin{equation*}
   \rho_F:X_{M,\B^+}\longrightarrow X_{M,\B}
\end{equation*}
is the toric blow-up with center the orbit closure \(Z_F\).

Its normal bundle is
\begin{equation*}
   N_{Z_F/X_{M,\B}}
   =
   \bigoplus_{i=1}^{\ell}\OO_{Z_F}(x_{F_i}^{\B}).
\end{equation*}
For the strict transform of the divisor \(x_{F_i}^{\B}\), we use the class
\(x_{F_i}^{\B^+}\), which satisfies
\begin{equation*}
   \rho_F^*x_{F_i}^{\B}=x_{F_i}^{\B^+}+x_F.
\end{equation*}

Following \cite[Definition~2.3]{EFMPV}, the induced building sets on the
restriction and contraction are
\[
   \B|F:=\{G\in\B\mid G\subseteq F\},
   \qquad
   \B/F:=\{(G\vee F)/F\mid G\in\B,
      \;G\not\subseteq F\}.
\]
This is a notational convention: the symbol \((M|F,\B|F)\) denotes the
product of the restricted pieces indexed by the \(\B\)-factors \(F_1,\ldots,F_\ell\) of \(F\), not
a single ordinary building set on \(M|F\):
\begin{equation*}
   \A(M|F,\B|F)
   :=
   \bigotimes_{i=1}^{\ell}\A(M|F_i,\B|F_i),
   \qquad
   \K(M|F,\B|F)
   :=
   \bigotimes_{i=1}^{\ell}\K(M|F_i,\B|F_i).
\end{equation*}
The K\"unneth formulas for Chow rings and for \(K\)-rings of these smooth
toric varieties identify the rings of the product with the tensor products
displayed above; see, for example, \cite{Totaro}. Later, any class attached to \((M|F,\B|F)\) is interpreted as the external product, or external sum in \(K\)-theory, of the corresponding classes on the factors \((M|F_i,\B|F_i)\).

The closed star of \(\sigma_F^\B\) is the restriction--contraction product
fan
\begin{equation}\label{eq:star-product}
   \Star_{\Sigma_{M,\B}}(\sigma_F^\B)
   \simeq
   \Sigma_{M|F,\B|F}\times\Sigma_{M/F,\B/F};
\end{equation}
see \cite[Lemma~5.4]{EFMPV}.  Therefore we identify
\begin{equation*}
   \A(Z_F)
   \simeq
   \A(M|F,\B|F)\otimes\A(M/F,\B/F),
   \qquad
   \K(Z_F)
   \simeq
   \K(M|F,\B|F)\otimes\K(M/F,\B/F).
\end{equation*}

In the realizable case, the same notation describes the ordinary
de Concini--Procesi center.  Namely, if \(M\) is realized by \(L\), Theorem \ref{thm:wonderful-closure} implies \(Z_F^W:=W_{L,\B}\cap Z_F\) is the blow-up center of
\[
   W_{L,\B^+}\longrightarrow W_{L,\B}.
\]
Moreover,
\[
   Z_F^W
   \simeq
   W_{L|F,\B|F}\times W_{L/F,\B/F},
\]
compatible with the star product \eqref{eq:star-product}.

\section{The tangent class of a building set}
\label{sec:Aluffi-definition}

Let \(M\) be a loopless matroid of rank \(r\), and let \(\G\) be a building
set.  The purpose of this section is to define a class
\[
   T_{M,\G}\in\K(M,\G)
\]
which behaves as the tangent bundle of a de Concini--Procesi wonderful model
when \(M\) is realizable.  The construction is modeled on Aluffi's formula
for the Chern class of a blow-up along a complete intersection.

\subsection{Aluffi's complete-intersection blow-up formula}

\begin{lemma}[{\cite[Lemma~1.3]{AluffiBlowups}}]
\label{lem:Aluffi}
Let \(X\) be a smooth variety, and let \(Z\subset X\) be a smooth complete
intersection of smooth divisors \(D_1,\ldots,D_\ell\) meeting transversely.
Let
\[
   \rho:\widetilde X=\Bl_ZX\to X
\]
be the blow-up, let \(E_{\mathrm{exc}}\) be the exceptional divisor, and let
\(\widetilde D_i\) be the strict transform of \(D_i\).  Then
\[
   c(T_{\widetilde X})
   =
   \rho^*c(T_X)(1+E_{\mathrm{exc}})
   \prod_{i=1}^{\ell}
   \frac{1+\rho^*D_i-E_{\mathrm{exc}}}{1+\rho^*D_i}.
\]
Equivalently, since \(\rho^*D_i=\widetilde D_i+E_{\mathrm{exc}}\),
\begin{equation}\label{eq:Aluffi-Chern}
   c(T_{\widetilde X})
   =
   \rho^*c(T_X)(1+E_{\mathrm{exc}})
   \prod_{i=1}^{\ell}
   \frac{1+\widetilde D_i}{1+\widetilde D_i+E_{\mathrm{exc}}}.
\end{equation}
\end{lemma}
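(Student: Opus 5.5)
The plan is to derive the formula from the classical description of the tangent bundle of a blow-up. From it we first obtain the $K$-theoretic correction term, and then apply the total Chern class. Let $N=N_{Z/X}=\bigoplus_i \OO_Z(D_i)$, which holds because $Z$ is a transverse complete intersection. Write $j:E_{\mathrm{exc}}\hookrightarrow\widetilde X$ for the inclusion and $\pi:E_{\mathrm{exc}}=\PP(N)\to Z$ for the projection. The standard exact sequence for the blow-up along a smooth center (e.g. Fulton, Theorem 15.4 and its proof) is
\[
0\to T_{\widetilde X}\to \rho^*T_X\to j_*\bigl(\pi^*N/\OO_{E}(-1)\bigr)\to 0,
\]
where $\OO_E(-1)=N_{E/\widetilde X}=\OO_E(E_{\mathrm{exc}})$ is the tautological subbundle. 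From this we get, in $K(\widetilde X)$,
\[
T_{\widetilde X}-\rho^*T_X=-j_*\pi^*N+j_*\OO_E(E_{\mathrm{exc}}).
\]

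Next, each term is computed using the sequence $0\to\OO(-E_{\mathrm{exc}})\to\OO\to j_*\OO_E\to0$ together with the projection formula. For a line bundle $L$ on $\widetilde X$,
\[
j_*(j^*L)=[L]-[L(-E_{\mathrm{exc}})].
\]
Applying this with $L=\OO(E_{\mathrm{exc}})$ gives $[\OO(E_{\mathrm{exc}})]-[\OO]$. Since $\pi^*\OO_Z(D_i)=j^*\rho^*\OO(D_i)$, applying it with $L=\rho^*\OO(D_i)=\OO(\widetilde D_i+E_{\mathrm{exc}})$ gives $[\OO(\widetilde D_i+E_{\mathrm{exc}})]-[\OO(\widetilde D_i)]$. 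Substituting both yields the $K$-theoretic Aluffi formula quoted in the introduction.

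Finally, we take total Chern classes, which are multiplicative on $K$. This gives $c(T_{\widetilde X})=\rho^*c(T_X)\,(1+E_{\mathrm{exc}})\prod_i(1+\widetilde D_i)/(1+\widetilde D_i+E_{\mathrm{exc}})$, which is \eqref{eq:Aluffi-Chern}. The first displayed form follows from the substitution $\rho^*D_i=\widetilde D_i+E_{\mathrm{exc}}$, which holds because $Z\subset D_i$ has multiplicity one and $D_i$ is smooth.

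The only delicate point is the blow-up exact sequence, specifically the identification of the cokernel $\rho^*T_X/T_{\widetilde X}$ with $j_*$ of the universal quotient bundle on $\PP(N)$. This is a local computation in coordinates adapted to $Z$ and is standard. Alternatively, one can cite Aluffi directly, or Fulton's blow-up Chern class formula, specialized to a split normal bundle.
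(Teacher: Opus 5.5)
Your argument is correct, but it takes a genuinely different route from the paper. The paper gives no proof of Lemma~\ref{lem:Aluffi}: it cites \cite[Lemma~1.3]{AluffiBlowups} for the first form and obtains \eqref{eq:Aluffi-Chern} only by the substitution $\rho^*D_i=\widetilde D_i+E_{\mathrm{exc}}$.

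You instead derive the formula in three steps, all of which check out:
\begin{enumerate}
\item You start from the blow-up tangent sequence $0\to T_{\widetilde X}\to\rho^*T_X\to j_*(\pi^*N/\OO_E(-1))\to 0$.
\item You compute $j_*j^*L$ for line bundles $L$ via $0\to\OO(-E_{\mathrm{exc}})\to\OO\to j_*\OO_E\to 0$.
\item You use $N=\bigoplus_i\OO_Z(D_i)$ together with $\pi^*(\OO(D_i)|_Z)=j^*\rho^*\OO(D_i)$.
\end{enumerate}
The sign bookkeeping reproduces $R_\rho$ exactly.

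What your route buys is the stronger identity $T_{\widetilde X}=\rho^*T_X+R_\rho$ in $K_0(\widetilde X)$ for the actual tangent bundle, i.e.\ the Aluffi $K$-recursion of Definition~\ref{def:Aluffi-K-recursion}. The paper only remarks that the $K$-recursion implies the Chern formula. Yet it quotes the $K$-level statement in the introduction, and it uses $\Phi_y(T_{\widetilde X})=\Phi_y(\rho^*T_X)\Phi_y(R_\rho)$ in the proof of Lemma~\ref{lem:universal-one-step-HRR}. Since $\Phi_y$ is a multiplicative class determined by rank and total Chern class, the cited Chern-level identity does suffice there, so this is not a gap in the paper.

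Your derivation nonetheless makes the lemma self-contained and puts the $K$-theoretic form on a direct footing. Its single standard input is the blow-up tangent sequence (Porteous; Fulton, \S15.4), which you correctly identify as the only delicate point.
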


\begin{definition}[Aluffi \(K\)-recursion]
\label{def:Aluffi-K-recursion}
In the situation of Lemma~\ref{lem:Aluffi}, set
\begin{equation}\label{eq:Aluffi-K-correction}
   R_\rho
   :=
   [\OO(E_{\mathrm{exc}})]-[\OO]
   +
   \sum_{i=1}^{\ell}
   \left(
      [\OO(\widetilde D_i)]-[\OO(\widetilde D_i+E_{\mathrm{exc}})]
   \right)
   \in K_0(\widetilde X).
\end{equation}
We say that classes \(\T_X\in K_0(X)\) and
\(\T_{\widetilde X}\in K_0(\widetilde X)\) satisfy the \emph{Aluffi
\(K\)-recursion} for \(\rho\) if
\begin{equation}\label{eq:Aluffi-K-recursion}
   \T_{\widetilde X}=\rho^*\T_X+R_\rho.
\end{equation}
Taking total Chern classes in \eqref{eq:Aluffi-K-recursion} gives the
strict-transform form \eqref{eq:Aluffi-Chern}.
\end{definition}

\subsection{Cutting divisors and the intrinsic tangent class}

For \(F\in\G\setminus\{E\}\), define the divisor class
\begin{equation}\label{eq:theta-def}
   \theta_F^\G
   :=
   \alpha-
   \sum_{\substack{H\in\G\\F\subseteq H\subsetneq E}}x_H
   \in\A^1(M,\G).
\end{equation}
We call \(\theta_F^\G\) the \emph{cutting class} for \(F\).

The notation is motivated by the realizable case.  Suppose that \(M\) is
realized by \(L\).  In the wonderful blow-up construction, the center
corresponding to \(F\) is cut out, before the blow-up of that center, by
\(\rk(F)\) general hyperplanes containing the linear space indexed by \(F\).
After the blow-ups of the larger flats \(H\supseteq F\), the strict transform
of each such hyperplane has class \(\theta_F^\G\).  Therefore the blow-up at
\(F\) contributes the Chern-class factor
\[
   (1+x_F)
   \left(
      \frac{1+\theta_F^\G}{1+\theta_F^\G+x_F}
   \right)^{\rk(F)}.
\]
When \(\rk(F)=1\), the definition of \(\alpha\) gives
\(\theta_F^\G=0\), and this factor is equal to \(1\), as expected for a
divisorial blow-up center.

\begin{definition}[Tangent class]
\label{def:tangent}
The \emph{tangent class} of \((M,\G)\) is the \(K\)-class
\begin{equation}\label{eq:intrinsic-T-def}
\begin{aligned}
   T_{M,\G}
   :=&\; r[\OO(\alpha)]-[\OO]  \\
   &+
   \sum_{F\in\G\setminus\{E\}}
   \left(
      [\OO(x_F)]-[\OO]
      +
      \rk(F)
      \left(
         [\OO(\theta_F^\G)]
         -
         [\OO(\theta_F^\G+x_F)]
      \right)
   \right)
   \in\K(M,\G).
\end{aligned}
\end{equation}
\end{definition}

Its total Chern class is
\begin{equation}\label{eq:intrinsic-cT}
   c(T_{M,\G})
   =
   (1+\alpha)^r
   \prod_{F\in\G\setminus\{E\}}
   (1+x_F)
   \left(
      \frac{1+\theta_F^\G}{1+\theta_F^\G+x_F}
   \right)^{\rk(F)}.
\end{equation}
Thus, when \(M\) is realizable, Definition~\ref{def:tangent} recovers the
ordinary tangent bundle of the wonderful model. It is useful to separate the logarithmic part from the boundary-normal part.
Set
\[
\begin{aligned}
   T^{\log}_{M,\G}
   :=
   &\; r[\OO(\alpha)]-[\OO]  \\
   &+
   \sum_{F\in\G\setminus\{E\}}
      \rk(F)
      \left(
         [\OO(\theta_F^\G)]
         -
         [\OO(\theta_F^\G+x_F)]
      \right).
\end{aligned}
\]
Then
\[
   T_{M,\G}
   =
   T^{\log}_{M,\G}
   +
   \sum_{F\in\G\setminus\{E\}}
      \bigl([\OO(x_F)]-[\OO]\bigr).
\]
In the realizable case, let
\[
   D=\sum_{F\in\G\setminus\{E\}}D_F
\]
be the boundary divisor of \(W_{L,\G}\). Then \(T^{\log}_{M,\G}\) is the
class of the logarithmic tangent bundle
\[
   T_{W_{L,\G}}(-\log D).
\]
Indeed, the exact sequence
\[
   0\to T_{W_{L,\G}}(-\log D)\to T_{W_{L,\G}}
      \to \bigoplus_{F\in\G\setminus\{E\}}\OO_{D_F}(D_F)\to 0
\]
and the identity
\[
   [\OO_{D_F}(D_F)]=[\OO(D_F)]-[\OO]
\]
show that the boundary terms in Definition~\ref{def:tangent} are precisely
the normal contributions from the boundary components. Thus the Euler term and the hyperplane-correction terms form the formal
logarithmic tangent class, while the terms
\[
   [\OO(x_F)]-[\OO]
\]
are the boundary-normal contributions.

\begin{example}[Five points and ten lines in \(\PP^3\)]
\label{ex:P3-points-lines}
Consider the braid matroid \(K_5\) with minimal building set.  In the
realizable model, one starts from \(\PP^3\), blows up five points \(p_i\),
and then blows up the strict transforms of the ten lines \(L_{ij}\) through
pairs of points.  Let \(h\) be the hyperplane class, and let \(e_i\) and
\(f_{ij}\) be the corresponding exceptional divisor classes.

For a point \(p_i\), the cutting class is
\[
   \theta_{p_i}=h-e_i.
\]
For a line \(L_{ij}\), after the point blow-ups and before the line blow-up,
the strict transform of a hyperplane containing \(L_{ij}\) has class
\(h-e_i-e_j\); after the line blow-up the cutting class is
\[
   \theta_{L_{ij}}=h-e_i-e_j-f_{ij}.
\]
Formula~\eqref{eq:intrinsic-cT} gives
\begin{equation}\label{eq:P3-example-cT}
\begin{aligned}
   c(T)
   =&\;(1+h)^4
   \prod_{i=1}^{5}
   (1+e_i)
   \left(\frac{1+h-e_i}{1+h}\right)^3   \\
   &\times
   \prod_{1\le i<j\le5}
   (1+f_{ij})
   \left(
      \frac{1+h-e_i-e_j-f_{ij}}{1+h-e_i-e_j}
   \right)^2 .
\end{aligned}
\end{equation}
\end{example}
\begin{proposition}[Maximal building set]
\label{prop:maximal-building-set}
Let
\[
   \Gmax=\LL(M)\setminus\{\emptyset\}.
\]
Then \(T_{M,\Gmax}\) agrees with the tangent class \(T_M\) of
\cite[Theorem~3.4]{ChengTangent}.  More explicitly,
\[
   T_{M,\Gmax}
   =
   -[\OO]
   +
   \sum_{F\subsetneq E}
      \bigl([\OO(x_F)]-[\OO]\bigr)
   +
   \sum_{k=1}^{r}
      \left[
        \OO\!\left(
           \alpha-
           \sum_{\substack{H\subsetneq E\\ r>\rk(H)\ge k}}x_H
        \right)
      \right].
\]
\end{proposition}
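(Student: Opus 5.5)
The plan is to specialize Definition~\ref{def:tangent} to \(\G=\Gmax\), show that the resulting expression collapses to the displayed formula, and then compare it term by term with the formula of \cite[Theorem~3.4]{ChengTangent}. The boundary terms \([\OO(x_F)]-[\OO]\) for \(F\subsetneq E\) already appear verbatim in both. It therefore suffices to show that
\[
   r[\OO(\alpha)]+\sum_{F\subsetneq E}\rk(F)\bigl([\OO(\theta_F)]-[\OO(\theta_F+x_F)]\bigr)
   =\sum_{k=1}^{r}\bigl[\OO(\beta_k)\bigr],
   \qquad
   \beta_k:=\alpha-\sum_{\substack{H\subsetneq E\\ r>\rk(H)\ge k}}x_H .
\]
Note that \(\beta_1=\alpha-\sum_{H\neq\emptyset,E}x_H\), and that \(\beta_r=\alpha\).

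The key tool is the \(K\)-theoretic Stanley--Reisner relation, Lemma~\ref{lem:K-SR}. In \(\Gmax\), a set of flats is nested exactly when it is a chain. Fix \(k\). Any two distinct flats of rank \(k\) are incomparable, so \((1-[\OO(-x_F)])(1-[\OO(-x_G)])=0\) for such \(F\neq G\). Consequently, for \(D_k:=\sum_{\rk F=k}x_F\) we get
\[
   [\OO(-D_k)]=\prod_{\rk F=k}[\OO(-x_F)]=1-\sum_{\rk F=k}\bigl(1-[\OO(-x_F)]\bigr).
\]
Now write \(\theta_F=\beta_{k+1}-\sum_{H\supseteq F,\ \rk H=\dots}\cdots\) more carefully. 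The difference \(\beta_{k+1}-\theta_F\) is a sum of \(x_H\) over flats \(H\) with \(\rk H\ge k+1\) not containing \(F\), together with the single term \(-x_F\) when \(\rk F=k\). Multiplying by \(1-[\OO(-x_F)]\), one uses that non-chain products vanish. Then
\[
   [\OO(\theta_F)]-[\OO(\theta_F+x_F)]=-[\OO(\theta_F+x_F)]\bigl(1-[\OO(-x_F)]\bigr)
\]
can be rewritten with \(\theta_F+x_F\) replaced by \(\beta_{\rk F}\). The point is that each \(x_H\) with \(H\not\supseteq F\), \(H\neq F\), \(\rk H\ge\rk F\), is incomparable to \(F\), so the factor \([\OO(\pm x_H)]\) acts as \(1\) against \(1-[\OO(-x_F)]\).

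Summing over \(F\) of rank \(k\) therefore gives
\[
   k\,[\OO(\beta_k)]\bigl([\OO(-D_k)]-1\bigr)=k\bigl([\OO(\beta_{k+1})]-[\OO(\beta_k)]\bigr),
\]
since \(\beta_k-D_k=\beta_{k+1}\). The total then telescopes via Abel summation:
\[
   r[\OO(\beta_r)]+\sum_{k=1}^{r-1}k\bigl([\OO(\beta_{k+1})]-[\OO(\beta_k)]\bigr)=\sum_{k=1}^{r}[\OO(\beta_k)].
\]
This also uses that \(\rk(F)\le r-1\) for \(F\subsetneq E\). The computation yields the displayed formula, and I would finish by citing that this is precisely the expression in \cite[Theorem~3.4]{ChengTangent}.

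The main obstacle is the replacement step: showing that \([\OO(\theta_F+x_F)](1-[\OO(-x_F)])=[\OO(\beta_{\rk F})](1-[\OO(-x_F)])\). This requires expanding \([\OO(\sum x_H)]\) as a product of factors \(1+(\,[\OO(x_H)]-1)\), and noting that \([\OO(x_H)]-1=[\OO(x_H)](1-[\OO(-x_H)])\) is killed by \(1-[\OO(-x_F)]\) whenever \(H\) and \(F\) are incomparable. I would verify this using Lemma~\ref{lem:K-SR} together with Lemma~\ref{lem:ch-detects-K}, checking the Chern character in the Chow ring if any sign bookkeeping becomes unclear.
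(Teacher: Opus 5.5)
Your strategy is the same as the paper's. You replace each cutting class by the rank-level class \(\beta_{\rk F}\) (the paper's \(L_{\rk F}\)) using Lemma~\ref{lem:K-SR} for flats incomparable to \(F\), telescope over the flats of a fixed rank, and finish by Abel summation. As written, however, the bookkeeping contains errors that do not cancel, and your final displayed identity is false.

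\textbf{Error 1: the replacement step.} You cannot replace \(\theta_F+x_F\) by \(\beta_{\rk F}\). Since \(\beta_{\rk F}\) subtracts \(x_F\) itself, one has \(\theta_F+x_F=\beta_{\rk F}+x_F+\sum x_H\), with the sum over \(H\) incomparable to \(F\). Your claimed identity is therefore off by the factor \([\OO(x_F)]\), and \(F\) is not incomparable with itself. The two sides differ by \([\OO(\beta_{\rk F})][\OO(-x_F)]\bigl([\OO(x_F)]-1\bigr)^2\). This is nonzero in general: for a rank-one flat of the rank-\(3\) Boolean matroid, its Chern character has top-degree part \(x_F^2\neq0\).

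The correct replacements, for \(k=\rk F\), are \(\theta_F\equiv\beta_k\) and \(\theta_F+x_F\equiv\beta_{k+1}\). The second is what your description of \(\beta_{k+1}-\theta_F\) actually yields once its signs are fixed, since \(\beta_{k+1}-\theta_F=x_F-\sum_{\rk H\ge k+1,\,H\not\supseteq F}x_H\).

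\textbf{Error 2: the index shift.} The identity \(\beta_k-D_k=\beta_{k+1}\) is false. Raising the index from \(k\) to \(k+1\) removes the rank-\(k\) terms from the subtracted sum, so \(\beta_{k+1}=\beta_k+D_k\).

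\textbf{Consequence.} Your rank-\(k\) contribution has the wrong sign, and your Abel summation is false as written. Its left side equals \((2r-1)[\OO(\beta_r)]-\sum_{k<r}[\OO(\beta_k)]\). For \(r=2\) it would force \([\OO(\alpha)]=[\OO((1-n)\alpha)]\), where \(n\ge2\) is the number of rank-one flats.

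\textbf{The fix.} Using \(\theta_F+x_F\equiv\beta_{k+1}\) and your own relation \(\sum_{\rk F=k}\bigl(1-[\OO(-x_F)]\bigr)=1-[\OO(-D_k)]\), one gets
\[
\sum_{\rk F=k}\bigl([\OO(\theta_F)]-[\OO(\theta_F+x_F)]\bigr)
=-[\OO(\beta_{k+1})]\bigl(1-[\OO(-D_k)]\bigr)
=[\OO(\beta_k)]-[\OO(\beta_{k+1})].
\]
This is exactly the paper's telescoping identity. Abel summation then gives
\[
r[\OO(\beta_r)]+\sum_{k=1}^{r-1}k\bigl([\OO(\beta_k)]-[\OO(\beta_{k+1})]\bigr)=\sum_{k=1}^{r}[\OO(\beta_k)].
\]
With these corrections your argument coincides with the paper's proof.
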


\begin{proof}
For \(F\in\Gmax\setminus\{E\}\), set
\[
   L_{\rk(F)}
   :=
   \alpha-
   \sum_{\substack{H\subsetneq E\\r>\rk(H)\ge \rk(F)}}x_H .
\]
We first claim that the \(F\)-summand in the hyperplane-correction part may
be written as
\[
   \rk(F)
   \left(
      [\OO(L_{\rk(F)})]
      -
      [\OO(L_{\rk(F)}+x_F)]
   \right).
\]
Indeed,
\[
   \theta_F^{\Gmax}
   =
   \alpha-
   \sum_{\substack{H\supseteq F\\H\subsetneq E}}x_H.
\]
The difference between \(L_{\rk(F)}\) and \(\theta_F^{\Gmax}\) is a sum of
classes \(x_H\) with \(\rk(H)\ge \rk(F)\) and \(H\nsupseteq F\).  Such an
\(H\) is incomparable with \(F\), by Lemma~\ref{lem:K-SR},
\[
   (1-[\OO(-x_F)])(1-[\OO(-x_H)])=0.
\]
This relation implies that subtracting such an \(x_H\) from both line bundles
does not change the difference \([\OO(D)]-[\OO(D+x_F)]\). Applying this to all such \(H\) gives the claim.

Therefore
\[
\begin{aligned}
   T_{M,\Gmax}
   =
   &\;r[\OO(\alpha)]-[\OO]
   +
   \sum_{F\subsetneq E}
      \bigl([\OO(x_F)]-[\OO]\bigr)  \\
   &+
   \sum_{F\subsetneq E}
      \rk(F)
      \left(
         [\OO(L_{\rk(F)})]
         -
         [\OO(L_{\rk(F)}+x_F)]
      \right).
\end{aligned}
\]
For \(1\le k\le r-1\), put
\[
   L_k:=
   \alpha-
   \sum_{\substack{H\subsetneq E\\r>\rk(H)\ge k}}x_H.
\]
Since flats of the same rank are pairwise incomparable, another application
of Lemma~\ref{lem:K-SR} gives the telescoping identity
\[
   \sum_{\rk(F)=k}
      \left(
         [\OO(L_k)]-[\OO(L_k+x_F)]
      \right)
   =
   [\OO(L_k)]-[\OO(L_{k+1})].
\]
Thus the hyperplane-correction part is
\[
   \sum_{k=1}^{r-1}k\bigl([\OO(L_k)]-[\OO(L_{k+1})]\bigr).
\]
Since \(\alpha = L_r\), telesoping again yields the displayed formula.
\end{proof}
\begin{remark}
The same argument applies to any building set \(\G\) with the property that
every incomparable pair \(F,G\in\G\setminus\{E\}\) is non-nested, equivalently
\(F\vee G\in\G\). This corresponds to the case of polymatroids \cite{CHLSW}.
\end{remark}

\subsection{Compatibility with one-step building-set blow-ups}
\label{subsec:relative-form}

Let \(\B^+=\B\cup\{F\}\) be a one-step enlargement, and write
\[
   \Fac_\B(F)=\{F_1,\ldots,F_\ell\}.
\]
Let
\[
   \rho_F:X_{M,\B^+}\to X_{M,\B}
\]
be the toric morphism induced by the stellar subdivision at
\(\sigma_F^\B=\cone(v_{F_1},\ldots,v_{F_\ell})\).

\begin{lemma}[Pullback of boundary divisors]
\label{lem:pullback-boundary-divisors}
For every \(G\in\B\setminus\{E\}\),
\[
   \rho_F^*x_G^\B
   =
   \begin{cases}
      x_G^{\B^+}+x_F, & G\in\Fac_\B(F),\\[3pt]
      x_G^{\B^+}, & G\notin\Fac_\B(F).
   \end{cases}
\]
\end{lemma}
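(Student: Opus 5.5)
This is the standard toric pullback of torus-invariant Cartier divisors under a stellar subdivision, and I will prove it that way. On the smooth toric variety \(X_{M,\B}\), the class \(x_G^\B\) is the class of the torus-invariant prime divisor \(D_{v_G}\). Such a divisor is Cartier, and its support function \(\psi_G\) is the piecewise-linear function on \(|\Sigma_{M,\B}|\) that is linear on each cone, takes the value \(1\) on \(v_G\), and vanishes on every other ray. The pullback \(\rho_F^*D_{v_G}\) is the divisor whose support function is the same function \(\psi_G\), now viewed on the refined fan \(\Sigma_{M,\B^+}\). Its coefficient along the ray \(v_H\) of \(\Sigma_{M,\B^+}\) is therefore \(\psi_G(v_H)\). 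I will use the standard fact (e.g.\ Fulton's toric varieties, or Cox--Little--Schenck) that for a proper toric morphism induced by a fan refinement, \(\rho^*D_\psi=\sum_{\text{rays }u}\psi(u)D_u\).

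The rays of \(\Sigma_{M,\B^+}\) are the old rays \(v_H\) for \(H\in\B\setminus\{E\}\), together with the single new ray \(v_F\). On the old rays, \(\psi_G(v_H)=\delta_{GH}\). So it only remains to compute \(\psi_G(v_F)\), which gives the coefficient of \(x_F\).

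The key step is to express \(v_F\) in the cone \(\sigma_F^\B=\cone(v_{F_1},\ldots,v_{F_\ell})\). The flats \(F_1,\ldots,F_\ell\) are the \(\B\)-factors of \(F\). The product decomposition \([\hat0,F]\simeq\prod_i[\hat0,F_i]\) forces \(F\) to be the disjoint union \(F_1\sqcup\cdots\sqcup F_\ell\) as a set. To see this, note that each element of \(F\) lies in some atom-closure below \(F\), hence in a unique factor, and the factors are pairwise disjoint because their meets are \(\emptyset\). It follows that
\[
   v_F=\sum_{i=1}^{\ell}v_{F_i}\quad\text{in }N_E,
\]
and this point lies in the relative interior of \(\sigma_F^\B\). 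Since \(\psi_G\) is linear on \(\sigma_F^\B\), we get \(\psi_G(v_F)=\sum_i\psi_G(v_{F_i})\). This equals \(1\) when \(G\in\Fac_\B(F)\) and \(0\) otherwise. This yields exactly the two cases \(x_G^{\B^+}+x_F\) and \(x_G^{\B^+}\).

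The main point needing care is the identity \(v_F=\sum v_{F_i}\), namely that the factors partition \(F\). This uses looplessness: every element of \(F\) is contained in a rank-one flat, which lies below exactly one factor. One must also check that the stellar subdivision is taken at the ray through \(\sum v_{F_i}\), so that the new ray really is \(v_F\). This is consistent with the description of \(\Sigma_{M,\B^+}\) as the stellar subdivision of \(\Sigma_{M,\B}\) at \(\sigma_F^\B\) recalled in Section~\ref{subsec:onestep}. Finally, the identification of the divisor \(x_G\) with the ray \(v_G\) and its compatibility with the Feichtner--Yuzvinsky presentation are also recalled in Section~\ref{subsec:onestep}.
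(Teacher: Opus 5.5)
Your proof is correct, but it takes a different route from the paper's. The paper argues geometrically. It regards \(\rho_F\) as the blow-up along the transverse intersection \(Z_F=D_{F_1}^\B\cap\cdots\cap D_{F_\ell}^\B\), observes that \(D_G^\B\) contains this center exactly when \(G\in\Fac_\B(F)\) (with multiplicity one), and uses that the strict transform of \(D_G^\B\) is \(D_G^{\B^+}\). So the exceptional divisor \(x_F\) appears precisely for the factors.

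You instead compute the pullback through the piecewise-linear support function \(\psi_G\) on the refined fan. This reduces the lemma to evaluating \(\psi_G\) at the single new ray, and the crux becomes the lattice identity \(v_F=\sum_i v_{F_i}\).

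\emph{What your route buys.} It is entirely combinatorial. It also makes explicit something the paper takes for granted when it identifies the stellar subdivision at \(\sigma_F^\B\) with the blow-up creating the ray \(v_F\): namely, that the \(\B\)-factors partition \(F\) as a set. You derive this correctly from looplessness and the product decomposition \([\hat 0,F]\simeq\prod_i[\hat 0,F_i]\). Your argument also generalizes at once to arbitrary refinements, giving \(\rho^*x_G=\sum_H\psi_G(v_H)\,x_H\).

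\emph{What the paper's route buys.} It is shorter once the blow-up description of \(\rho_F\) is accepted. It is also phrased in the same strict-transform and exceptional-divisor language as Aluffi's formula, where \(\rho^*D_i=\widetilde D_i+E_{\mathrm{exc}}\) is exactly this lemma. It rests on two standard facts: \(V(\sigma)\subseteq D_{v_G}\) if and only if \(v_G\) is a ray of \(\sigma\), and the coefficient of the exceptional divisor in the pullback of a divisor equals the divisor's multiplicity along the smooth center.
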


\begin{proof}
The exceptional divisor of \(\rho_F\) is the divisor \(D_F\) corresponding to the new ray \(v_F\). The center is
\[
   V(\sigma_F^\B)=D_{F_1}^\B\cap\cdots\cap D_{F_\ell}^\B.
\]
Thus \(D_G^\B\) contains the center exactly for
\(G\in\{F_1,\ldots,F_\ell\}\), and the multiplicity is one.  The strict
transform of \(D_G^\B\) is \(D_G^{\B^+}\), giving the stated formula.
\end{proof}

\begin{lemma}[Factor hyperplane cancellation]
\label{lem:factor-hyperplane-cancellation}
Set
\[
   A_i:=\theta_{F_i}^{\B^+}+x_{F_i}^{\B^+},
   \qquad
   \Theta:=\theta_F^{\B^+}.
\]
Then, in \(\K(M,\B^+)\),
\begin{equation}\label{eq:factor-K-cancellation}
   [\OO(A_i+x_F)]-[\OO(A_i)]
   =
   [\OO(\Theta+x_F)]-[\OO(\Theta)]
\end{equation}
for every \(i\).
\end{lemma}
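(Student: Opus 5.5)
The plan is to compute the divisor difference \(A_i-\Theta\) explicitly, show that it is a sum of boundary classes \(x_H\) with each such \(H\) non-nested with \(F\) in \(\B^+\), and then remove these terms one at a time using Lemma~\ref{lem:K-SR}, as in the proof of Proposition~\ref{prop:maximal-building-set}.

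\textbf{Step 1: the difference \(A_i-\Theta\).} By \eqref{eq:theta-def} in \(\B^+\),
\[
   A_i=\theta_{F_i}^{\B^+}+x_{F_i}^{\B^+}
   =\alpha-\sum_{\substack{H\in\B^+\\ F_i\subsetneq H\subsetneq E}}x_H,
   \qquad
   \Theta=\alpha-\sum_{\substack{H\in\B^+\\ F\subseteq H\subsetneq E}}x_H .
\]
Since \(F_i\subsetneq F\), every \(H\supseteq F\) also satisfies \(H\supsetneq F_i\). Hence
\[
   A_i-\Theta=-\sum_{H\in S_i}x_H,
   \qquad
   S_i:=\{H\in\B^+\setminus\{E\}: F_i\subsetneq H,\ H\not\supseteq F\}.
\]

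\textbf{Step 2: each \(\{H,F\}\) with \(H\in S_i\) is not \(\B^+\)-nested.} Fix \(H\in S_i\). Then \(H\neq F\), and \(H\in\B\).
\begin{itemize}
\item \(H\not\subseteq F\): otherwise \(H\) would be an element of \(\B\) inside \(F\) strictly containing \(F_i\). This contradicts the maximality of \(F_i\) among elements of \(\B\) contained in \(F\).
\item Since also \(H\not\supseteq F\), the flats \(H\) and \(F\) are incomparable.
\item \(H\wedge F\supseteq F_i\neq\emptyset\).
\end{itemize}
Now use the standard building-set property: if \(G_1,G_2\in\B^+\) satisfy \(G_1\wedge G_2\neq\hat 0\), then \(G_1\vee G_2\in\B^+\). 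To prove it, take the \(\B^+\)-factors of \(G_1\vee G_2\). Each \(G_j\in\B^+\) lies below a single factor. In the product decomposition \([\hat 0,G_1\vee G_2]\simeq\prod[\hat 0,G'_k]\), elements below distinct factors have meet \(\hat 0\). So \(G_1\) and \(G_2\) lie below the same factor \(G'\). Then \(G'\ge G_1\vee G_2\), so \(G'=G_1\vee G_2\in\B^+\). Applying this gives \(H\vee F\in\B^+\), so \(\{H,F\}\) is not \(\B^+\)-nested.

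\textbf{Step 3: remove the \(x_H\) one at a time.} By Lemma~\ref{lem:K-SR},
\[
   (1-[\OO(-x_F)])(1-[\OO(-x_H)])=0 .
\]
Hence for any divisor class \(D\),
\[
   \bigl([\OO(D+x_F)]-[\OO(D)]\bigr)\bigl(1-[\OO(-x_H)]\bigr)
   =[\OO(D+x_F)]\bigl(1-[\OO(-x_F)]\bigr)\bigl(1-[\OO(-x_H)]\bigr)=0 .
\]
So \([\OO(D+x_F)]-[\OO(D)]\) is unchanged when \(D\) is replaced by \(D-x_H\). Start from \(D=A_i\) and subtract \(x_H\) for each \(H\in S_i\) in turn. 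This reaches \(D=\Theta\) and gives \eqref{eq:factor-K-cancellation}.

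The main obstacle is Step 2, namely verifying that every \(H\in S_i\) is genuinely non-nested with \(F\). This rests on the maximality of \(F_i\) as a \(\B\)-factor of \(F\) together with the join-closure property of building sets. The other steps are bookkeeping with \eqref{eq:theta-def}.
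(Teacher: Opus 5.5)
Your proposal is correct and is essentially the paper's proof. It uses the same computation $A_i-\Theta=-\sum_{H\in S_i}x_H$, the same join-closure argument that each $\{H,F\}$ is not $\B^+$-nested, and the same absorption via Lemma~\ref{lem:K-SR}; the only cosmetic difference is that you remove the $x_H$ one at a time, while the paper uses the product $\prod_{H}[\OO(-x_H)]$ in one step.

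Two small points. First, your $H\wedge F\supseteq F_i$, together with your explicit incomparability check, is more accurate than the paper's claim $H\wedge F=F_i$. That claim can fail: take the Boolean lattice on $\{1,2,3,4\}$ with $\B$ consisting of the singletons, $\{2,3\}$, $\{1,2,4\}$ and $E$, and let $F=\{1,2,3\}$ and $H=\{1,2,4\}$; then $H\wedge F=\{1,2\}\neq\{1\}=F_1$. Only nonemptiness is needed, so the paper's argument survives. Second, Step~3 has a direction slip: since $\Theta=A_i+\sum_{H\in S_i}x_H$, you should add the $x_H$ to $A_i$ (or subtract them from $\Theta$), not subtract them from $A_i$. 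Your invariance holds for every $D$, so it allows this equally well.
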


\begin{proof}
By the definition of the cutting classes,
\[
   A_i-\Theta
   =
   -
   \sum_{H\in\mathcal S_i}x_H,
   \qquad
   \mathcal S_i:=
   \left\{
      H\in\B^+
      \;\middle|\;
      F_i\subsetneq H,
      \ F\nsubseteq H
   \right\}.
\]
If \(H\in\mathcal S_i\), then \(H\wedge F=F_i\neq\emptyset\). Since two elements of a building set with nonzero meet have their join in the
building set, \(H\vee F\in\B^+\), and \(\{H,F\}\) is not \(\B^+\)-nested.
By Lemma~\ref{lem:K-SR},
\[
   \left(1-[\OO(-x_H)]\right)
   \left(1-[\OO(-x_F)]\right)=0.
\]
Put \(m_H=[\OO(-x_H)]\), the preceding relation
says \(m_H(1-[\OO(-x_F)])=1-[\OO(-x_F)]\), and hence
\[
   \prod_{H\in\mathcal S_i}m_H\,(1-[\OO(-x_F)])=1-[\OO(-x_F)].
\]
We get
\[
\begin{aligned}
   [\OO(A_i+x_F)]-[\OO(A_i)]
   &=
   [\OO(A_i)]\bigl([\OO(x_F)]-1\bigr) \\
   &=
   [\OO(\Theta)]\prod_{H\in\mathcal S_i}m_H\bigl([\OO(x_F)]-1\bigr) \\
   &=
   [\OO(\Theta)]\bigl([\OO(x_F)]-1\bigr) \\
   &=
   [\OO(\Theta+x_F)]-[\OO(\Theta)].
\end{aligned}
\] \end{proof}

\begin{proposition}[Relative Aluffi identity]
\label{prop:relative-Aluffi}
For a one-step enlargement \(\B^+=\B\cup\{F\}\), the tangent classes satisfy
\begin{equation}\label{eq:relative-Aluffi-K}
   T_{M,\B^+}
   =
   \rho_F^*T_{M,\B}
   +[\OO(x_F)]-[\OO]
   +
   \sum_{i=1}^{\ell}
   \left(
      [\OO(x_{F_i}^{\B^+})]
      -[\OO(x_{F_i}^{\B^+}+x_F)]
   \right).
\end{equation}
Consequently,
\begin{equation}\label{eq:relative-Aluffi-Chern}
   c(T_{M,\B^+})
   =
   \rho_F^*c(T_{M,\B})(1+x_F)
   \prod_{i=1}^{\ell}
   \frac{1+x_{F_i}^{\B^+}}{1+x_{F_i}^{\B^+}+x_F}.
\end{equation}
\end{proposition}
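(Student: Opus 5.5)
We compare the definitions of \(T_{M,\B^+}\) and \(\rho_F^*T_{M,\B}\) term by term, using Lemma~\ref{lem:pullback-boundary-divisors} to pull back each line bundle and Lemma~\ref{lem:factor-hyperplane-cancellation} to absorb the leftover terms. The Chern-class form \eqref{eq:relative-Aluffi-Chern} then follows by taking total Chern classes of \eqref{eq:relative-Aluffi-K}, since \(c\) is multiplicative and \(c([\OO(D)])=1+D\).

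First, pull back \(\alpha\) and the cutting classes. The class \(\alpha\) is the pullback of the hyperplane class, so \(\rho_F^*\alpha^\B=\alpha^{\B^+}\). To check this directly, compute \(\alpha\) with a fixed \(i\in E\). Lemma~\ref{lem:pullback-boundary-divisors} gives
\[
\rho_F^*\alpha^\B=\alpha^{\B^+}-[i\in F]\,x_F+\#\{j: i\in F_j\}\,x_F .
\]
The last two terms cancel, because choosing \(i\in F_1\) gives exactly one factor containing \(i\).

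For \(G\in\B\setminus\{E\}\), we pull back \(\theta_G^\B=\alpha-\sum_{G\subseteq H}x_H\). The sum gains \(x_F\) once for each factor \(F_j\supseteq G\), and \(\theta_G^{\B^+}\) subtracts \(x_F\) exactly when \(G\subseteq F\). There are three cases.
\begin{itemize}
\item[(a)] If \(G\not\subseteq F\), no \(F_j\) contains \(G\), so \(\rho_F^*\theta_G^\B=\theta_G^{\B^+}\).
\item[(b)] If \(G\subseteq F\) and \(G\notin\Fac_\B(F)\), then \(G\) lies in a unique \(F_j\), by the product decomposition of \([\hat0,F]\) and the fact that \(G\in\B\). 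Hence \(\rho_F^*\theta_G^\B=\theta_G^{\B^+}\) again.
\item[(c)] If \(G=F_i\), the same count gives \(\rho_F^*\theta_{F_i}^\B=\theta_{F_i}^{\B^+}\).
\end{itemize}
In every case the \(\theta\)-classes pull back to the \(\theta\)-classes. Only the \(x\)-classes change, namely \(\rho_F^*x_{F_i}=x_{F_i}^{\B^+}+x_F\).

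Now compare the two tangent classes. The terms \(r[\OO(\alpha)]-[\OO]\) agree. For \(G\notin\Fac_\B(F)\), every summand pulls back to the corresponding \(\B^+\)-summand. The summand for the new flat \(F\) contributes
\[
[\OO(x_F)]-[\OO]+\rk(F)\bigl([\OO(\Theta)]-[\OO(\Theta+x_F)]\bigr),\qquad \Theta=\theta_F^{\B^+}.
\]
For \(G=F_i\), write \(A_i=\theta_{F_i}^{\B^+}+x_{F_i}^{\B^+}\). The difference between the \(\B^+\)-summand and the pulled-back summand is
\[
[\OO(x_{F_i}^{\B^+})]-[\OO(x_{F_i}^{\B^+}+x_F)]+\rk(F_i)\bigl([\OO(A_i+x_F)]-[\OO(A_i)]\bigr).
\]
The first two terms are exactly the \(i\)-th correction term in \eqref{eq:relative-Aluffi-K}. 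By Lemma~\ref{lem:factor-hyperplane-cancellation}, the last term equals \(\rk(F_i)\bigl([\OO(\Theta+x_F)]-[\OO(\Theta)]\bigr)\). Summing over \(i\) and using \(\rk(F)=\sum_i\rk(F_i)\), these terms cancel the \(\rk(F)\)-term coming from the new flat. This leaves precisely \eqref{eq:relative-Aluffi-K}.

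The main obstacle is the case analysis for \(\rho_F^*\theta_G^\B\). In case (b) one must verify that each \(G\in\B\) with \(G\subsetneq F\) lies in exactly one \(\B\)-factor of \(F\). This should follow because the factors are the maximal elements of \(\B\) below \(F\), together with the product decomposition \([\hat0,F]\simeq\prod_j[\hat0,F_j]\). One should also confirm that \(\alpha\) is computed consistently in \(\B\) and \(\B^+\); the paper's remark that the linear forms in \(J_\G\) have a common value handles this.
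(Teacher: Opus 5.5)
Your proposal is correct and follows the paper's proof essentially step for step. Both arguments establish \(\rho_F^*\alpha=\alpha\) and \(\rho_F^*\theta_G^\B=\theta_G^{\B^+}\), match all summands not indexed by the factors \(F_i\), and then use Lemma~\ref{lem:factor-hyperplane-cancellation} together with \(\rk(F)=\sum_i\rk(F_i)\) to cancel the new flat's hyperplane-correction term. Your explicit counting check for \(\alpha\), and your justification that each \(G\in\B\) with \(G\subseteq F\) lies in exactly one factor, fill in details the paper only asserts; the latter holds because the factors are pairwise disjoint and every such \(G\) lies under a maximal element of \(\B\) below \(F\).
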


\begin{proof}
For \(K\in\B\setminus\{E\}\), the cutting classes satisfy
\begin{equation}\label{eq:theta-pullback-onestep}
   \rho_F^*\theta_K^\B=\theta_K^{\B^+}.
\end{equation}
Indeed, \(\rho_F^*\alpha=\alpha\).  If \(K\nsubseteq F\), the new flat
\(F\) does not occur in the defining sum for \(\theta_K^{\B^+}\).  If
\(K\subseteq F\), then \(K\) is contained in a unique \(\B\)-factor
\(F_i\) of \(F\), and the exceptional term in
\(\rho_F^*x_{F_i}^\B=x_{F_i}^{\B^+}+x_F\) is exactly the new \(x_F\)-term in
\(\theta_K^{\B^+}\).

We compare \(T_{M,\B^+}\) with \(\rho_F^*T_{M,\B}\).  All summands indexed
by flats \(K\in\B\setminus\{E,F_1,\ldots,F_\ell\}\) match by
\eqref{eq:theta-pullback-onestep} and Lemma~\ref{lem:pullback-boundary-divisors}.
The boundary terms for the factors contribute
\[
   \sum_{i=1}^{\ell}
   \left(
      [\OO(x_{F_i}^{\B^+})]
      -[\OO(x_{F_i}^{\B^+}+x_F)]
   \right).
\]
The new flat \(F\) contributes
\[
   [\OO(x_F)]-[\OO]
   +
   \rk(F)
   \left([
      \OO(\Theta)]-[\OO(\Theta+x_F)]
   \right),
\]
where \(\Theta=\theta_F^{\B^+}\).  Finally, the hyperplane-correction terms
for the old factors \(F_i\) contribute
\[
   \sum_{i=1}^{\ell}
   \rk(F_i)
   \left(
      [\OO(A_i+x_F)]-[\OO(A_i)]
   \right),
   \quad \text{where }
   A_i=\theta_{F_i}^{\B^+}+x_{F_i}^{\B^+}.
\]
By Lemma~\ref{lem:factor-hyperplane-cancellation}, the last display is
\[
   \sum_{i=1}^{\ell}\rk(F_i)
   \left(
      [\OO(\Theta+x_F)]-[\OO(\Theta)]
   \right).
\]
Since \(\rk(F)=\sum_i\rk(F_i)\), this cancels the hyperplane-correction part
of the new \(F\)-summand.  The remaining terms are exactly
\eqref{eq:relative-Aluffi-K}. Taking total Chern classes gives
\eqref{eq:relative-Aluffi-Chern}.
\end{proof}

\begin{remark}[The Boolean ambient model and the normal class]
\label{rem:boolean-ambient-normal}
Let \(\G\subseteq L(M)\setminus\{\emptyset\}\) be a building set
containing the top flat \(E\).  Define
\[
  \mathcal U_\G
  :=
  \{\,\varnothing\neq S\subseteq E
      \mid \operatorname{cl}_M(S)\in \G\,\}.
\]
Then \(\mathcal U_\G\) is a top-containing building set in the Boolean
lattice \(2^E\). This is because for the Boolean lattice, a collection \(\mathcal B \subseteq 2^E \setminus \{\varnothing\}\) is a building set iff
\begin{enumerate}
    \item \(\B\) contains all singleton, and
    \item \(S, T \in \B,\, S \cap T \neq \varnothing \Rightarrow S \cup T \in \B\). 
\end{enumerate}

Let \(X_{E,\mathcal U_\G}\) be the smooth projective toric variety
associated to the Boolean building set \(\mathcal U_\G\).  Its total
Chern class is
\[
  c(T_{X_{E,\mathcal U_\G}})
  =
  \prod_{S\in \mathcal U_\G\setminus\{E\}} (1+y_S),
\]
where \(y_S\) is the torus-invariant divisor class indexed by \(S\).

Assume now that \(M\) is realized by a linear subspace \(L\).  The
\(\G\)-nested fan of \(M\) is the subfan of the
\(\mathcal U_\G\)-nested fan whose rays are indexed by flats.  Hence
there are natural inclusions
\[
  W_{L,\G}\subseteq X_{M,\G}\subseteq X_{E,\mathcal U_\G}.
\]
Write
\[
  i_\G\colon W_{L,\G}\hookrightarrow X_{E,\mathcal U_\G}
\]
for the resulting embedding, and set the normal bundle
\[
  \mathcal N_{L,\G}
  :=
  N_{W_{L,\G}/X_{E,\mathcal U_\G}}.
\]

The toric divisors indexed by non-flats are disjoint from
\(X_{M,\G}\), hence from \(W_{L,\G}\).  The divisor indexed by a flat
\(F\in \G\setminus\{E\}\) restricts to the boundary divisor class
\(x_F\).  Therefore
\[
  c(i_\G^*T_{X_{E,\mathcal U_\G}})
  =
  \prod_{F\in \G\setminus\{E\}}(1+x_F).
\]
The normal exact sequence
\[
  0\longrightarrow T_{W_{L,\G}}
  \longrightarrow i_\G^*T_{X_{E,\mathcal U_\G}}
  \longrightarrow \mathcal N_{L,\G}
  \longrightarrow 0
\]
gives
\[
  [\mathcal N_{L,\G}]
  =
  \sum_{F\in \G\setminus\{E\}}
    \bigl([\mathcal O(x_F)]-[\mathcal O]\bigr)
  +
  (|E|-1)[\mathcal O]
  -
  [T_{W_{L,\G}}]
\]
in \(K\)-theory. Motivated by this formula, for an arbitrary loopless matroid \(M\) we
define the normal class
\[
  N_{M,\G}
  :=
  \sum_{F\in \G\setminus\{E\}}
    \bigl([\mathcal O(x_F)]-[\mathcal O]\bigr)
  +
  (|E|-1)[\mathcal O]
  -
  T_{M,\G}
  \in K(M,\G).
\]
When \(M\) is realized by \(L\), this class is represented by the actual
normal bundle \(\mathcal N_{L,\G}\).

For a one-step enlargement \(\B^+=\B\cup\{F\}\), Proposition~\ref{prop:relative-Aluffi} is equivalent to the pullback identity
\[
  N_{M,\B^+}=\rho_F^*N_{M,\B}.
\]

It is natural to ask whether the class \(N_{M,\G}\) is the restriction
of a canonical Berget--Eur--Spink--Tseng type quotient class \cite{BEST23}
\[
  \mathscr Q_{M,\G}\in K_0(X_{E,\mathcal U_\G})
\]
of rank \(|E|-\operatorname{rk}(M)\).  In the realizable case, such a
class should restrict to \(\mathcal N_{L,\G}\), and one would expect
\(W_{L,\G}\) to arise as the zero locus of a regular section, extending
the quotient-bundle picture to arbitrary building sets.
\end{remark}
\section{The formal HRR package and its tangent realization}
\label{sec:formal-HRR-and-realization}

In the previous section we defined the tangent class
\[
   T_{M,\G}\in\K(M,\G)
\]
and proved its compatibility with one-step building-set blow-ups.  In this
section we first define the formal Hirzebruch--Riemann--Roch package for
arbitrary building sets.  The main theorem then proves that this package is
realized by the tangent class \(T_{M,\G}\).

For a \(K\)-class \(\xi\), set
\[
   \lambda_y(\xi):=\sum_{p\ge0}[\wedge^p\xi]y^p
\]
using the \(\lambda\)-ring structure on \(K\)-theory, and define
\begin{equation}\label{eq:Phi-y-def}
   \Phi_y(\xi)
   :=
   \ch(\lambda_y\xi^\vee)\td(\xi)
   \in \A(-)_\Q[y].
\end{equation}

Equivalently, if the Chern roots of \(\xi\) are \(u_1,\ldots,u_d\), then
\[
   \Phi_y(\xi)=\prod_{a=1}^d Q_y(u_a),
   \qquad
   Q_y(u):=\frac{u(1+ye^{-u})}{1-e^{-u}},
\] and \(\Phi_y\) is multiplicative for direct sums in \(K\)-theory.

If \(\xi=T_X\) is the tangent bundle of a smooth variety, then
\[
   \Phi_y(T_X)=\ch(\lambda_y\Omega_X^1)\td(T_X)
\] is the cohomological Hirzebruch class in the smooth case \cite{BSY}.

\subsection{The realizable case}
\label{subsec:realizable-guide-HRR}

Suppose first that \(M\) is realized by a linear subspace \(L\), and write
\[
   W_\G:=W_{L,\G}
\]
for the de Concini--Procesi wonderful model.  If \(\G\subseteq\Hh\), the
morphism
\[
   \rho:W_\Hh\longrightarrow W_\G
\]
is an iterated blow-up along smooth centers.  Hence
\[
   R^i\rho_*\OO_{W_\Hh}=0\quad(i>0),
   \qquad
   \rho_*\OO_{W_\Hh}=\OO_{W_\G}.
\]
Grothendieck--Riemann--Roch for \(\rho\) and \(\OO_{W_\Hh}\) gives
\[
   \rho_*\td(T_{W_\Hh})=\td(T_{W_\G}).
\]
This motivates defining the formal Todd class by pushforward from the
maximal building set.

The corresponding Hirzebruch class is
\[
   \mathfrak H_y(W_\G)
   :=
   \Phi_y(T_{W_\G})
   =
   \ch(\lambda_yT_{W_\G}^\vee)\td(T_{W_\G}).
\]
If \(M\) is moreover realizable over \(\mathbb C\), the wonderful model is obtained from projective space by iterated blow-ups along smooth centers whose cohomology is generated by algebraic classes. By the blow-up formula, the same is true for \(W_\G\), and hence its cohomology is of type \((p,p)\). Thus \(H^{2p}(W_\G,\Q)\) is generated by algebraic classes and
\(H^{p,q}(W_\G)=0\) for \(p\ne q\). Therefore
\[
   \dim_\Q\A^p(W_\G)=h^{p,p}(W_\G)=(-1)^p\chi(W_\G,\Omega_{W_\G}^p).
\]
Hirzebruch--Riemann--Roch therefore gives
\[
   \deg_{W_\G}\bigl(\mathfrak H_y(W_\G)|_{y=-t}\bigr)
   =
   \sum_{p\ge0}\dim_\Q\A^p(W_\G)t^p.
\]
The rest of the section reproduces this package in the Feichtner--Yuzvinsky
rings for arbitrary matroids and building sets.

\subsection{The formal Todd class and HRR identity}

Let \(\Gmax\) be the maximal building set of \(M\).  For any building set
\(\G\), let
\[
   \rho_{\Gmax,\G}:X_{M,\Gmax}\longrightarrow X_{M,\G}
\]
be the toric morphism induced by the refinement of nested-set fans.  Define
the \emph{formal Todd class of \((M,\G)\)} by
\begin{equation}\label{eq:formal-Todd-def}
   \Td_{M,\G}
   :=
   (\rho_{\Gmax,\G})_*\td(T_{M,\Gmax})
   \in \A(M,\G)_\Q.
\end{equation}

\begin{definition}\label{def:chi-MG}
Following \cite[Remark~4.4]{LLPP}, define the Euler characteristic on
\(\K(M,\G)\) by pullback to the maximal building set:
\[
   \chi_{M,\G}(\xi)
   :=
   \chi_{M,\Gmax}\bigl((\rho_{\Gmax,\G})^*\xi\bigr).
\]
\end{definition}

\begin{proposition}[Formal HRR]
\label{prop:formal-HRR}
For every \(\xi\in\K(M,\G)\),
\[
   \chi_{M,\G}(\xi)
   =
   \int_{M,\G}\ch(\xi)\Td_{M,\G}.
\]
\end{proposition}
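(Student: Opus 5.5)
Write \(\rho:=\rho_{\Gmax,\G}\). The plan is to reduce everything to the maximal building set and then apply the projection formula for the proper toric morphism \(\rho\).

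\textbf{Step 1: HRR on the maximal building set.} For every \(\eta\in\K(M,\Gmax)\) we have
\[
   \chi_{M,\Gmax}(\eta)=\int_{M,\Gmax}\ch(\eta)\td(T_{M,\Gmax}).
\]
This is the matroidal HRR theorem of \cite{ChengTangent}. There \(\chi_{M,\Gmax}\) is the Euler characteristic of \cite{LLPP}, and by Proposition~\ref{prop:maximal-building-set} the class \(T_{M,\Gmax}\) coincides with the class \(T_M\) considered there. I will take this identity as the input from earlier work.

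\textbf{Step 2: commute pullback with \(\ch\).} Let \(\xi\in\K(M,\G)\). By Definition~\ref{def:chi-MG},
\[
   \chi_{M,\G}(\xi)=\chi_{M,\Gmax}(\rho^*\xi)=\int_{M,\Gmax}\ch(\rho^*\xi)\,\td(T_{M,\Gmax}).
\]
The Chern character is natural for pullbacks of vector bundles on smooth varieties, so \(\ch(\rho^*\xi)=\rho^*\ch(\xi)\) in \(\A(M,\Gmax)_\Q\). Here \(\rho^*\) on Chow rings is the toric pullback. In Feichtner--Yuzvinsky terms it is the iterate of the one-step formulas in Lemma~\ref{lem:pullback-boundary-divisors}.

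\textbf{Step 3: projection formula.} The morphism \(\rho\) is proper and birational, and is an isomorphism on the dense torus. So the projection formula gives
\[
   \rho_*\bigl(\rho^*\ch(\xi)\cdot\td(T_{M,\Gmax})\bigr)=\ch(\xi)\cdot\rho_*\td(T_{M,\Gmax}),
\]
together with \(\int_{M,\Gmax}=\int_{M,\G}\circ\rho_*\). Substituting definition \eqref{eq:formal-Todd-def} turns the right-hand side of Step~2 into
\[
   \int_{M,\G}\ch(\xi)\Td_{M,\G},
\]
which is the claimed identity.

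\textbf{Main obstacle.} The one point needing care is compatibility of degree maps, \(\deg_{M,\G}\circ\rho_*=\deg_{M,\Gmax}\). The varieties \(X_{M,\G}\) are not complete, so this cannot be quoted from the projective setting. I would prove it by factoring \(\rho\) into one-step blow-ups \(\rho_F\) via \cite[Theorem~4.2]{FM}. For each step I would check two facts in the Feichtner--Yuzvinsky presentation.
\begin{itemize}
\item The map \(\rho_{F*}\) sends the top-degree monomial \(x_{\mathcal S}\), for a maximal \(\B^+\)-nested set \(\mathcal S\), to the corresponding top class. This is the standard stellar-subdivision computation: a maximal cone of the subdivision maps with degree one to the maximal cone containing it.
\item The degree normalizations, \(\alpha^{r-1}\mapsto1\) say, match. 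This holds because \(\rho_F^*\alpha=\alpha\) and \(\rho_{F*}\rho_F^*=\mathrm{id}\) for a blow-up.
\end{itemize}
Together these give \(\deg_{M,\B}\circ\rho_{F*}=\deg_{M,\B^+}\), and composing along the chain of one-step enlargements gives the compatibility for \(\rho\). The projection formula itself is formal, since each \(\rho_F\) is a proper toric morphism of smooth toric varieties.
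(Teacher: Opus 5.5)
Your proposal is correct and is essentially the paper's proof: HRR on \(\Gmax\), naturality of \(\ch\) under \(\rho_{\Gmax,\G}^*\), and the projection formula combined with the definition \(\Td_{M,\G}=(\rho_{\Gmax,\G})_*\td(T_{M,\Gmax})\). The paper uses the degree compatibility \(\deg_{M,\G}\circ(\rho_{\Gmax,\G})_*=\deg_{M,\Gmax}\) without comment. Your justification of it is valid: each maximal cone of the finer fan lies in a maximal cone of the coarser fan with the same span, so each top monomial pushes forward to a top monomial. This check could even be done directly for \(\rho_{\Gmax,\G}\), without factoring into one-step blow-ups.
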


\begin{proof}
By definition of \(\chi_{M,\G}\) and the HRR identity on the maximal
building set,
\[
   \chi_{M,\G}(\xi)
   =
   \int_{M,\Gmax}
      \ch((\rho_{\Gmax,\G})^*\xi)\td(T_{M,\Gmax}).
\]
The Chern character commutes with pullback, and the projection formula gives
\[
   \chi_{M,\G}(\xi)
   =
   \int_{M,\G}
      \ch(\xi)(\rho_{\Gmax,\G})_*\td(T_{M,\Gmax})
   =
   \int_{M,\G}\ch(\xi)\Td_{M,\G}.
\]
\end{proof}

\subsection{The recursive Hirzebruch class}

We now define the formal Hirzebruch class
\[
   \Hy(M,\G)\in\A(M,\G)_\Q[y].
\]
For the maximal building set, set
\begin{equation}\label{eq:Hy-max-def}
   \Hy(M,\Gmax):=\Phi_y(T_{M,\Gmax}).
\end{equation}

Suppose that \(\B^+=\B\cup\{F\}\) is a one-step enlargement.  We use the
standing notation of Section~\ref{subsec:onestep}:
\[
   \Fac_\B(F)=\{F_1,\ldots,F_\ell\},
   \qquad
   \rho_F:X_{M,\B^+}\to X_{M,\B},
   \qquad
   \iota_F:Z_F\hookrightarrow X_{M,\B}.
\]
Under the star-fan identification,
\[
   \A(Z_F)
   \simeq
   \A(M|F,\B|F)\otimes\A(M/F,\B/F),
\]
and similarly in \(K\)-theory.  External products are denoted by
\(\boxtimes\).  Define
\[
   q_\ell(y):=(-y)+(-y)^2+\cdots+(-y)^{\ell-1}.
\]
For a non-maximal building set \(\B\), define \(\Hy(M,\B)\) recursively by
\begin{equation}\label{eq:Hy-recursion}
\begin{aligned}
   \Hy(M,\B)
   :=&\; (\rho_F)_*\Hy(M,\B^+)        \\
   &-
   q_\ell(y)(\iota_F)_*
   \left(
      \Hy(M|F,\B|F)\boxtimes\Hy(M/F,\B/F)
   \right).
\end{aligned}
\end{equation}
To make this a construction, choose a chain
\[
   \G=\G_0\subset\G_1\subset\cdots\subset\G_N=\Gmax
\]
in which each step adds one flat.  Such chains exist by the one-step
enlargement theorem recalled in Section~\ref{subsec:onestep}. The terms involving \(M|F\) and \(M/F\) are already defined by induction on
rank, using the product convention for the restriction factor. A priori the
resulting class depends on the chosen chain.  Theorem~\ref{thm:tangent-realization}
below implies that it does not. 

\begin{proposition}[Formal Chow-polynomial formula]
\label{prop:formal-Chow-polynomial}
For every building set \(\G\),
\[
   \deg_{M,\G}\bigl(\Hy(M,\G)|_{y=-t}\bigr)
   =
   H_M^\G(t)
   :=
   \sum_{p\ge0}\dim_\Q\A^p(M,\G)t^p.
\]
Equivalently, if \(\Hy(M,\G)=\sum_p\Theta^p_{M,\G}y^p\), then
\[
   \dim_\Q\A^p(M,\G)=(-1)^p\deg_{M,\G}\Theta^p_{M,\G}.
\]
\end{proposition}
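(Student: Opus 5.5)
We argue by descending induction along the chain
\[
   \G=\G_0\subset\G_1\subset\cdots\subset\G_N=\Gmax
\]
used to construct \(\Hy(M,\G)\), combined with induction on the rank of the matroid. The inner induction on rank handles the restriction and contraction terms that appear in \eqref{eq:Hy-recursion}. The argument works for whichever chain was used to define \(\Hy(M,\G)\), so we never need chain-independence here.

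\emph{Base case.} For \(\G=\Gmax\) we have \(\Hy(M,\Gmax)=\Phi_y(T_M)\) by \eqref{eq:Hy-max-def}. The identity \(\deg_{M,\Gmax}\Phi_y(T_M)|_{y=-t}=H_M^{\Gmax}(t)\) is the matroidal Hirzebruch--Riemann--Roch statement for the tangent class of \cite{ChengTangent}. We invoke it as the input from the maximal case. If it has to be re-derived, the route is the usual one: HRR for \(\chi(\lambda_y T_M^\vee)\), together with the vanishing of the corresponding higher "cohomology" in the matroidal \(K\)-ring.

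\emph{Inductive step.} Let \(\B^+=\B\cup\{F\}\) be a one-step enlargement, with \(\Fac_\B(F)=\{F_1,\ldots,F_\ell\}\). Apply \(\deg_{M,\B}\) to \eqref{eq:Hy-recursion} and carry out three reductions.

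First, \(\deg_{M,\B}\circ(\rho_F)_*=\deg_{M,\B^+}\), because \(\rho_F\) is birational and proper. By induction this term gives \(H_M^{\B^+}(t)\) after setting \(y=-t\).

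Second, \(\deg_{M,\B}\circ(\iota_F)_*=\deg_{Z_F}\). The star identification \eqref{eq:star-product} identifies \(Z_F\) with the product of the factor pieces. The dimension count \(\sum_i(\rk F_i-1)+(r-\rk F-1)=r-1-\ell\) shows that \(\deg_{Z_F}\) is the tensor product of the degree maps \(\deg_{M|F_i,\B|F_i}\) and \(\deg_{M/F,\B/F}\). By the rank induction, the external product term therefore has degree
\[
   H^{\B/F}_{M/F}(t)\prod_{i=1}^{\ell}H^{\B|F_i}_{M|F_i}(t)=:H_{Z_F}(t).
\]
Third, \(q_\ell(-t)=t+t^2+\cdots+t^{\ell-1}\).

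It then remains to prove the Hilbert-series blow-up identity
\[
   H_M^{\B^+}(t)=H_M^{\B}(t)+(t+\cdots+t^{\ell-1})\,H_{Z_F}(t).
\]
We obtain this from the graded decomposition of the Chow ring of a blow-up along a codimension-\(\ell\) center:
\[
   \A(M,\B^+)\simeq \A(M,\B)\oplus\bigoplus_{j=1}^{\ell-1}\A(Z_F)\,x_F^{\,j}.
\]
For the toric blow-up \(\rho_F\) this holds because it is a stellar subdivision along the smooth orbit closure \(Z_F\), and Keel's formula applies to smooth toric varieties. We then transfer it to the Feichtner--Yuzvinsky rings through the identifications of Section~\ref{subsec:onestep}. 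Alternatively, one can cite the one-step enlargement decomposition in \cite{FM} or \cite{EFMPV}, which is proved purely combinatorially and so holds for arbitrary matroids.

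The main obstacle is making sure the Chow rings \(\A(M,\B)\) and \(\A(Z_F)\) entering this decomposition are the matroidal rings (supported on the Bergman fan) and not the Chow rings of a complete toric variety. Concretely, the argument must rest on the combinatorial semi-small or blow-up decomposition for building-set enlargements, and the degree normalization on the star \(Z_F\) must be compatible with the product identification. Once that is in place, the three reductions combine to give \(\deg_{M,\B}\Hy(M,\B)|_{y=-t}=H_M^{\B^+}(t)-q_\ell(-t)H_{Z_F}(t)=H_M^\B(t)\). Finally, the coefficientwise form \(\dim_\Q\A^p(M,\G)=(-1)^p\deg_{M,\G}\Theta^p_{M,\G}\) follows by comparing coefficients of \(t^p\).
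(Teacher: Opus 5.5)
Your proposal is correct and follows the paper's proof essentially step for step: descending induction along the one-step chain combined with induction on rank, pushing degrees through \((\rho_F)_*\) and \((\iota_F)_*\), evaluating \(q_\ell(-t)=t+\cdots+t^{\ell-1}\), and closing with the one-step Hilbert-series identity. The only difference is in how that identity is obtained: the paper simply cites \cite[Theorem~5.1]{EFMPV}, while you also sketch a derivation from the Chow-group blow-up formula for the smooth (non-complete) toric blow-up \(X_{M,\B^+}\to X_{M,\B}\), which is legitimate because the Chow rings of these toric varieties are exactly the Feichtner--Yuzvinsky rings.
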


\begin{proof}
For \(\Gmax\), this is the maximal-building-set formula from
\cite{ChengTangent}.  Suppose \(\B^+=\B\cup\{F\}\) and
\(\ell=|\Fac_\B(F)|\).  Evaluating \eqref{eq:Hy-recursion} at \(y=-t\) and
taking degrees gives
\[
\begin{aligned}
   \deg_{M,\B}\bigl(\Hy(M,\B)|_{y=-t}\bigr)
   =&\;
   \deg_{M,\B^+}\bigl(\Hy(M,\B^+)|_{y=-t}\bigr) \\
   &-
   (t+\cdots+t^{\ell-1})
   \deg_{Z_F}
   \left(
      \Hy(M|F,\B|F)|_{y=-t}\boxtimes
      \Hy(M/F,\B/F)|_{y=-t}
   \right).
\end{aligned}
\]
By induction on rank and on the length of the chosen chain, this equals
\[
   H_M^{\B^+}(t)
   -
   (t+\cdots+t^{\ell-1})
   H_{M|F}^{\B|F}(t)H_{M/F}^{\B/F}(t).
\]
The one-step Hilbert-series formula \cite[Theorem~5.1]{EFMPV} says
\[
   H_M^{\B^+}(t)
   =
   H_M^\B(t)
   +
   (t+\cdots+t^{\ell-1})
   H_{M|F}^{\B|F}(t)H_{M/F}^{\B/F}(t),
\]
so the displayed degree is \(H_M^\B(t)\).
\end{proof}

The preceding construction is formal.  The main point is the following theorem.

\begin{theorem}[Tangent realization of the HRR package]
\label{thm:tangent-realization}
For every loopless matroid \(M\) and every building set \(\G\),
\[
   \Hy(M,\G)=\Phi_y(T_{M,\G}).
\]
In particular,
\[
   \Td_{M,\G}=\td(T_{M,\G}),
\]
and
\[
   \deg_{M,\G}
   \left(
      \ch(\lambda_yT_{M,\G}^\vee)\td(T_{M,\G})
   \right)\bigg|_{y=-t}
   =
   H_M^\G(t).
\]
\end{theorem}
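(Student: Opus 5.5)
We argue by induction: first on \(\rk(M)\), and then, for fixed \(M\), on the number of flats in \(\Gmax\setminus\G\). We fix the chain \(\G=\G_0\subset\cdots\subset\G_N=\Gmax\) used to define \(\Hy(M,\G)\), and prove \(\Hy(M,\B)=\Phi_y(T_{M,\B})\) for each \(\B=\G_j\), descending from \(j=N\). Since \(\Phi_y(T_{M,\G})\) visibly does not depend on the chain, this also shows the recursion is chain-independent. The base case \(\B=\Gmax\) is the definition \eqref{eq:Hy-max-def} together with Proposition~\ref{prop:maximal-building-set}. Induction on rank gives \(\Hy(M|F_i,\B|F_i)=\Phi_y(T_{M|F_i,\B|F_i})\) and \(\Hy(M/F,\B/F)=\Phi_y(T_{M/F,\B/F})\). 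Multiplicativity of \(\Phi_y\) under external sums then identifies the boxtimes term in \eqref{eq:Hy-recursion} with \(\Phi_y(T_{M|F,\B|F}\boxplus T_{M/F,\B/F})\). The two special cases follow at once: at \(y=0\) we get \(\Phi_0=\td\), and the \(y=0\) recursion has no correction term since \(q_\ell(0)=0\), so \(\Td\) is the pushforward from \(\Gmax\). The degree identity is Proposition~\ref{prop:formal-Chow-polynomial}.

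For the inductive step we must show
\[
(\rho_F)_*\Phi_y(T_{M,\B^+})=\Phi_y(T_{M,\B})+q_\ell(y)(\iota_F)_*\Phi_y\bigl(T_{M|F,\B|F}\boxplus T_{M/F,\B/F}\bigr).
\]
This is a universal blow-up formula for the Hirzebruch class of a blow-up along a codimension-\(\ell\) center. Write \(T_{M,\B^+}=\rho_F^*T_{M,\B}+R_{\rho_F}\), which is Proposition~\ref{prop:relative-Aluffi}. The projection formula then gives
\[
(\rho_F)_*\Phi_y(T_{M,\B^+})=\Phi_y(T_{M,\B})\,(\rho_F)_*\Phi_y(R_{\rho_F}).
\]
Write \(\Phi_y(R_{\rho_F})=1+(\text{terms divisible by } x_F)\). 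Such terms push forward to classes supported on \(Z_F\). We compute them on the projective bundle \(E_{\mathrm{exc}}=\PP(N_F)\to Z_F\), where \(N_F=\bigoplus_i\OO_{Z_F}(x_{F_i}^\B)\), using the standard toric blow-up relations: \(x_F|_{E}=-\zeta\) is the tautological class, and \(\prod_i(x_F+x_{F_i}^{\B^+})\)-type relations hold, i.e. \(\prod_i(\zeta+c_1(\OO(x_{F_i})))=0\).

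The resulting computation is exactly the classical one for \(\chi_y\)-type Hirzebruch classes under blow-up (cf. \cite{BSY}). It shows that \((\rho_F)_*\Phi_y(R_{\rho_F})-1\) equals \(q_\ell(y)\,(\iota_F)_*\bigl(\Phi_y(N_F)^{-1}\bigr)\). Heuristically this is \(\chi_y(\PP^{\ell-1})-1=q_\ell(y)\), with the \(\Phi_y(N_F)\) coming from the self-intersection formula.

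Granting that, the right side becomes \(\Phi_y(T_{M,\B})+q_\ell(y)(\iota_F)_*\bigl(\iota_F^*\Phi_y(T_{M,\B})\,\Phi_y(N_F)^{-1}\bigr)\). The proof then reduces to the star-normal identity
\[
\iota_F^*T_{M,\B}=T_{M|F,\B|F}\boxplus T_{M/F,\B/F}+N_F\quad\text{in }\K(Z_F).
\]
We prove this directly from Definition~\ref{def:tangent} by restricting each line bundle to \(Z_F\). The restrictions \(x_G|_{Z_F}\) vanish for \(G\) not nested with \(\{F_1,\dots,F_\ell\}\), by Lemma~\ref{lem:K-SR}. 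For \(G\subsetneq F_i\) the restriction becomes the corresponding divisor of \(M|F_i\). For \(G\) with \(G\vee F\) giving a flat of \(M/F\) it becomes the divisor of \(M/F\). Finally \(x_{F_i}|_{Z_F}\) is the normal direction, and \(\alpha\) restricts to \(\alpha_{M/F}\) on one side and to \(\alpha_{M|F_i}-\sum(\ldots)\) on the other. We expect the Euler terms \(r[\OO(\alpha)]\) to split as \(\sum_i\rk(F_i)+\rk(M/F)\), with the cutting classes \(\theta\) supplying the \(\alpha_{M|F_i}\) pieces and telescoping as in Lemma~\ref{lem:factor-hyperplane-cancellation}. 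The \(-[\OO]\) counts should match \(\ell\) copies from the restriction factors plus one from the contraction, with \(\ell\) absorbed by \(N_F\).

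We expect this bookkeeping to be the main obstacle, especially identifying \(\theta_G^\B|_{Z_F}\) with the cutting classes of the restricted and contracted building sets. The second delicate point is verifying the universal pushforward formula purely in the Chow ring of the toric blow-up. If the direct route stalls, we would instead pass to the Chern-root level, where the computation is a one-variable residue over \(\zeta\) using \(Q_y\).
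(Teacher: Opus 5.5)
Your outline is the paper's proof: descending induction along the chain combined with induction on rank, Proposition~\ref{prop:relative-Aluffi} plus the projection formula to reduce to $(\rho_F)_*\Phi_y(R_{\rho_F})=1+q_\ell(y)\,(\iota_F)_*\Phi_y(-N_F)$, and the star-normal identity to match the recursion \eqref{eq:Hy-recursion}. The $y=0$ and degree consequences are also obtained as in the paper. The gap is that both ingredients carrying the content are asserted rather than proved.

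\textbf{The pushforward identity.} The paper needs no computation here. $X_{M,\B}$, $X_{M,\B^+}$ and $Z_F$ are honest smooth toric varieties, and $\rho_F$ is the blow-up along the transverse intersection $D_{F_1}\cap\cdots\cap D_{F_\ell}$. So Aluffi's lemma and the Brasselet--Sch\"urmann--Yokura blow-up formula hold for their actual tangent bundles. Substitute $T_{\widetilde X}=\rho^*T_X+R_\rho$ and $T_Z=\iota^*T_X-N$, and cancel the unit $\Phi_y(T_X)$ in $\A(X)_\Q[[y]]$ (its degree-zero part is $(1+y)^{\dim X}$). What remains involves only $R_\rho$ and $N$, so it holds verbatim for the matroid classes (Lemma~\ref{lem:universal-one-step-HRR}).

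Your residue route is a legitimate and more self-contained alternative, and it does close. Let $\pi:E_{\mathrm{exc}}\to Z_F$, $w=e^{\zeta}$ and $t_i=e^{-n_i}$. Then $\pi_*\bigl((1-\Phi_y(R_{\rho_F})|_{E_{\mathrm{exc}}})/\zeta\bigr)$ is the sum of the residues at $w=0$ and $w=\infty$ of $\frac{1+yw}{(1+y)(w-1)w}\prod_i\frac{w+yt_i}{(w-t_i)Q_y(n_i)}\,dw$. This equals $\bigl((-y)-(-y)^\ell\bigr)/\bigl((1+y)\prod_iQ_y(n_i)\bigr)=q_\ell(y)\Phi_y(-N_F)$. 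But you have to carry this out, not grant it. Also, the relation to use is $\prod_i x^{\B^+}_{F_i}=0$ (the $F_i$ are not $\B^+$-nested), whose restriction to $E_{\mathrm{exc}}$ is $\prod_i(\zeta+n_i)=0$. The product $\prod_i(x_F+x^{\B^+}_{F_i})=\rho_F^*[Z_F]$ is not zero.

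\textbf{The star-normal identity.} Here your sketch points at the wrong cancellation.
\begin{itemize}
\item $\iota_F^*\alpha$ is the single class $\alpha^R=1\boxtimes\alpha_{M/F}$. What you call its ``other side'' is the relation $\alpha_i+n_i+S_F^R=\alpha^R$, where $S_F^R=\sum_{G\in\B,\,F\subsetneq G\subsetneq E}x^R_G$. It comes from computing $\alpha$ with a point of $F_i$, and it gives $\iota_F^*\theta^\B_{F_i}=\alpha_i$.
\item Hence the $F_i$-summand yields three pieces, with $f_i=\rk F_i$: the Euler term $f_i[\OO(\alpha_i)]-[\OO]$ of $T_{M|F_i,\B|F_i}$ (this is where those $-[\OO]$'s go), the summand $[\OO(n_i)]$ of $N_F$, and a leftover $-f_i[\OO(\alpha_i+n_i)]$.
\item With $f=\rk F$, the Euler term of $T_{M,\B}$ splits as $\bigl((r-f)[\OO(\alpha^R)]-[\OO]\bigr)+f[\OO(\alpha^R)]$.
\item Since $\rk_M(G)=f+\rk_{M/F}(G/F)$ for $G\supsetneq F$, each such summand leaves an extra $f\bigl([\OO(\theta^R_G)]-[\OO(\theta^R_G+x^R_G)]\bigr)$.
\end{itemize}
The missing step is the chain telescoping in $\K(Z_F)$ over $\{G\in\B: F\subsetneq G\subsetneq E\}$; two incomparable such $G$ meet in $F$, so they are not nested. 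This telescoping turns $f[\OO(\alpha^R)]$ plus these extras into $f[\OO(\alpha^R-S_F^R)]=\sum_i f_i[\OO(\alpha_i+n_i)]$, which cancels the leftovers. That is Lemma~\ref{lem:right-factor-telescoping}. Lemma~\ref{lem:factor-hyperplane-cancellation} belongs to the relative Aluffi step on $X_{M,\B^+}$ and plays no role here.

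Your case split through $G\vee F$ is the right one. A star ray $G$ incomparable with $F$ must be disjoint from $F$, since otherwise $G\vee F\in\B$ contradicts nestedness. Then $\{G,F_1,\dots,F_\ell\}$ is a nested antichain, $\rk(G\vee F)=\rk G+f$, and $G$ restricts to the $\B/F$-summand of $(G\vee F)/F$ with no rank shift. Keep these rays explicit, since the paper's Lemma~\ref{lem:star-pullback-divisors} files them under ``otherwise''.
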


The rest of the section is devoted to the proof.

\subsection{A universal one-step identity}

\begin{lemma}[One-step Hirzebruch identity]
\label{lem:universal-one-step-HRR}
Let
\[
   \rho:\widetilde X=\Bl_ZX\to X
\]
be the blow-up of a smooth variety \(X\) along a smooth complete intersection
\[
   Z=D_1\cap\cdots\cap D_\ell
\]
of smooth divisors meeting transversely.  Let \(E_{\rm exc}\) be the
exceptional divisor, let \(\widetilde D_i\) be the strict transform of
\(D_i\), and let
\[
   N=\bigoplus_{i=1}^{\ell}\OO_Z(D_i)
\]
be the normal bundle of \(Z\) in \(X\).  Suppose classes
\[
   \T_X\in K_0(X),
   \qquad
   \T_{\widetilde X}\in K_0(\widetilde X),
   \qquad
   \T_Z\in K_0(Z)
\]
satisfy
\begin{align*}
   \T_{\widetilde X}
   &=
   \rho^*\T_X
   +[\OO(E_{\rm exc})]-[\OO]
   +
   \sum_{i=1}^{\ell}
   \left(
      [\OO(\widetilde D_i)]-[\OO(\widetilde D_i+E_{\rm exc})]
   \right), \\
   \T_Z&=\iota^*\T_X-N,
   \qquad
   \iota:Z\hookrightarrow X.
\end{align*}
Then
\begin{equation}\label{eq:universal-one-step-HRR}
   \rho_*\Phi_y(\T_{\widetilde X})
   =
   \Phi_y(\T_X)+q_\ell(y)\,\iota_*\Phi_y(\T_Z),
\end{equation}
where \(q_\ell(y)=(-y)+(-y)^2+\cdots+(-y)^{\ell-1}\).
\end{lemma}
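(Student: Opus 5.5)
We need \(\rho_*\Phi_y(\rho^*\T_X+R_\rho)=\Phi_y(\T_X)+q_\ell(y)\iota_*\Phi_y(\iota^*\T_X-N)\). Since \(\Phi_y\) is multiplicative, \(\Phi_y(\T_{\widetilde X})=\rho^*\Phi_y(\T_X)\cdot\Phi_y(R_\rho)\). By the projection formula, the left side equals \(\Phi_y(\T_X)\cdot\rho_*\Phi_y(R_\rho)\). On the right side we use \(\iota_*(\iota^*a\cdot b)=a\cdot\iota_*b\). Together these reduce the lemma to the identity
\[
   \rho_*\Phi_y(R_\rho)=1+q_\ell(y)\,\iota_*\Phi_y(-N),
\]
which no longer involves \(\T_X\) at all. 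It depends only on the blow-up geometry and on the line bundles \(\OO(D_i)\). We also have
\[
   \Phi_y(R_\rho)=Q_y(E)\prod_{i=1}^\ell\frac{Q_y(\widetilde D_i)}{Q_y(\widetilde D_i+E)},
   \qquad
   \Phi_y(-N)=\prod_i Q_y(D_i|_Z)^{-1},
\]
where \(E=E_{\rm exc}\) and \(Q_y(0)=1+y\) is the normalization.

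For the universal identity, the cleanest route is to note that both sides are universal power series in the classes \(D_1,\ldots,D_\ell\), with \(\iota_*\) producing the factor \(D_1\cdots D_\ell\). So it suffices to verify it in one model where the calculation is honest. I would take the real geometric situation in which \(\T_X=T_X\), \(\T_Z=T_Z\) and \(\T_{\widetilde X}=T_{\widetilde X}\). The hypotheses then hold by Aluffi's formula (Lemma~\ref{lem:Aluffi} in \(K\)-theory) and the normal sequence. The identity becomes the classical blow-up formula for Hirzebruch (motivic \(\chi_y\)) classes of Brasselet--Sch\"urmann--Yokura \cite{BSY}. There \(T_y\) is functorial for proper pushforward of motivic classes, and
\[
   [\widetilde X\to X]=[\mathrm{id}_X]+[E\to X]-[Z\to X],
\]
with \(E\to Z\) a \(\PP^{\ell-1}\)-bundle. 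This gives \(\rho_*T_y(\widetilde X)=T_y(X)+\iota_*\bigl((\pi_*T_y(E))-T_y(Z)\bigr)\). The fiberwise \(\chi_y\)-genus of \(\PP^{\ell-1}\) is \(1+(-y)+\cdots+(-y)^{\ell-1}\), and \(E=\PP(N)\) with \(N\) split. Computing \(\pi_*T_y(\PP(N))=(1+q_\ell(y))T_y(Z)\), up to correction terms, yields \(q_\ell(y)\,\iota_*T_y(Z)\). The normalization matters here: \(\Phi_y\) equals \(T_y\) up to the unnormalized convention, and the degree-rescaling must be handled consistently, since \(\ch(\lambda_y\Omega)\td\) is the unnormalized version. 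That is compatible with pushforward.

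The main obstacle is making the universality rigorous. The lemma is stated for arbitrary classes \(\T\), but the reduction above removes \(\T_X\), so only the geometric identity \(\rho_*\Phi_y(R_\rho)=1+q_\ell\iota_*\Phi_y(-N)\) remains. That identity holds for any blow-up along a transverse complete intersection of smooth divisors. Moreover, \(\pi_*\) of the Hirzebruch class of a projective bundle \(\PP(N)\) is \(\chi_y(\PP^{\ell-1})\) times the base class only when the relative tangent contribution is handled correctly. I would instead verify it directly: compute \(\pi_*\bigl(\Phi_y(T_{\PP(N)/Z})\bigr)\) via the Euler sequence \(T_{\PP(N)/Z}=\pi^*N\otimes\OO(1)-\OO\). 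Then apply fiberwise HRR, noting that \(\chi_y\) of the fibres is constant, equal to \(\sum_{p}(-y)^p\). If this proves awkward, a fallback is to expand \(\rho_*\) directly via the key formula \(\rho_*E^k=(-1)^{k-1}\iota_*s_{k-\ell}(N)\) and check the identity of power series.
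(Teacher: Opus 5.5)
Your proposal follows the paper's proof. You use the same reduction, via multiplicativity of \(\Phi_y\) and the projection formula, to the \(\T_X\)-independent identity \(\rho_*\Phi_y(R_\rho)=1+q_\ell(y)\,\iota_*\Phi_y(-N)\), and you obtain it on the same \(X\) by specializing to the genuine tangent bundles and combining Aluffi's formula, the normal sequence, and the Brasselet--Sch\"urmann--Yokura blow-up formula. Your motivic sketch of that formula is right and needs no correction terms, since \(\pi_*\Phi_y(T_{E_{\rm exc}})=\chi_y(\PP^{\ell-1})\,\Phi_y(T_Z)\) holds exactly for the projective bundle \(\pi:E_{\rm exc}\to Z\).

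Because the check happens on the given \(X\), you need no universality argument. The one step left implicit, by you and by the paper, is cancelling \(\Phi_y(T_X)\). This is legitimate: its degree-zero part is \((1+y)^{\dim X}\) and the rest is nilpotent, so it is a unit in \(A(X)_\Q[[y]]\).
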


\begin{proof}
Set
\[
   R_\rho
   :=
   [\OO(E_{\rm exc})]-[\OO]
   +
   \sum_{i=1}^{\ell}
   \left(
      [\OO(\widetilde D_i)]-[\OO(\widetilde D_i+E_{\rm exc})]
   \right).
\]
Since \(\Phi_y\) is multiplicative for direct sums in \(K\)-theory,
\[
   \Phi_y(\T_{\widetilde X})
   =
   \rho^*\Phi_y(\T_X)\Phi_y(R_\rho),
   \qquad
   \Phi_y(\T_Z)
   =
   \iota^*\Phi_y(\T_X)\Phi_y(-N).
\]
By the projection formula, it is enough to prove
\begin{equation}\label{eq:universal-reduced-R}
   \rho_*\Phi_y(R_\rho)
   =
   1+q_\ell(y)\,\iota_*\Phi_y(-N), 
\end{equation} which is independent of \(\T_X\).

Now take \(\T_X=T_X\), \(\T_{\widetilde X}=T_{\widetilde X}\), and
\(\T_Z=T_Z\).  By \cite[Example~3.3]{BSY},
\[
   \rho_*\Phi_y(T_{\widetilde X})
   =
   \Phi_y(T_X)+q_\ell(y)\,\iota_*\Phi_y(T_Z).
\]
Aluffi's formula says that \(\Phi_y(T_{\widetilde X})=\Phi_y(\rho^*T_X) \Phi_y(R_\rho)\), and the tangent-normal exact sequence gives
\[
   T_Z=\iota^*T_X-N.
\]
Substituting these two identities and applying the projection formula shows
\[
   \rho_*\Phi_y(R_\rho)
   =
   1+q_\ell(y)\,\iota_*\Phi_y(-N).
\]
This proves \eqref{eq:universal-reduced-R}, and the general identity follows.
\end{proof}

\subsection{The star-normal identity}

Proposition~\ref{prop:relative-Aluffi} gives the first hypothesis needed in
Lemma~\ref{lem:universal-one-step-HRR}.  We now verify the second hypothesis,
namely the center identity.  Let \(\B^+=\B\cup\{F\}\) be a one-step
enlargement, and write
\[
   \Fac_\B(F)=\{F_1,\ldots,F_\ell\}.
\]
Recall that \(Z_F\) is the closed-star stratum of
\(\sigma_F^\B=\cone(v_{F_1},\ldots,v_{F_\ell})\).  By the star-fan product,
\[
   \A(Z_F)
   \simeq
   \A(M|F,\B|F)\otimes\A(M/F,\B/F),
\]
and similarly in \(K\)-theory.  We use the product convention for the
restriction factor from Section~\ref{subsec:onestep}.

We use left/right notation.  If \(G\subsetneq F_i\), set
\[
   x_G^L:=x_G^{\B|F_i}\boxtimes 1,
   \qquad
   \theta_G^L:=\theta_G^{\B|F_i}\boxtimes 1.
\]
Here \(\theta_G^{\B|F_i}\) is the cutting class for the smaller pair
\((M|F_i,\B|F_i)\).  If \(F\subsetneq G\subsetneq E\), set
\[
   x_G^R:=1\boxtimes x_{G/F}^{\B/F},
   \qquad
   \theta_G^R:=1\boxtimes\theta_{G/F}^{\B/F},
   \qquad
   \alpha^R:=1\boxtimes\alpha_{M/F}.
\]
For each \(i\), let \(\alpha_i\) be the hyperplane class of the
\((M|F_i,\B|F_i)\)-factor, viewed in \(\A^1(Z_F)\).

The map
\[
   \iota_F^*:\A(M,\B)\longrightarrow\A(Z_F)
\]
is the Chow-ring restriction map induced by the toric orbit closure
\(V(\sigma_F^\B)\), followed by the star-fan identification.  We use the
same notation for the induced pullback of line-bundle classes in \(K\)-theory.
For \(i=1,\ldots,\ell\), set
\[
   n_i:=\iota_F^*x_{F_i}^\B\in\A^1(Z_F),
\]
and define
\begin{equation}\label{eq:NF-def}
   N_F:=\bigoplus_{i=1}^{\ell}\OO_{Z_F}(n_i)\in\K(Z_F).
\end{equation}

\begin{lemma}[Restriction of divisor classes to the star]
\label{lem:star-pullback-divisors}
The restriction map \(\iota_F^*:\A(M,\B)\to\A(Z_F)\) satisfies
\[
   \iota_F^*x_G^\B
   =
   \begin{cases}
      x_G^L, & G\subsetneq F_i\text{ for some }i,\\[3pt]
      n_i, & G=F_i,\\[3pt]
      x_G^R, & F\subsetneq G\subsetneq E,\\[3pt]
      0, & \text{otherwise.}
   \end{cases}
\]
Moreover,
\[
   \iota_F^*\alpha=\alpha^R,
\]
and, for every \(i=1,\ldots,\ell\),
\begin{equation}\label{eq:alpha-star-relation}
   \alpha_i+n_i+
   \sum_{\substack{H\in\B\\F\subsetneq H\subsetneq E}}x_H^R
   =
   \alpha^R.
\end{equation}
\end{lemma}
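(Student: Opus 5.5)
I will compute \(\iota_F^*\) through standard toric restriction to the orbit closure \(V(\sigma)\), with \(\sigma:=\sigma_F^\B\). For a ray \(\rho_G\) of \(\Sigma_{M,\B}\) there are three cases. First, if \(\rho_G\) does not lie in \(\Star(\sigma)\), i.e.\ \(\{G\}\cup\{F_1,\dots,F_\ell\}\) is not \(\B\)-nested, then \(D_G\cap V(\sigma)=\emptyset\) and \(\iota_F^*x_G=0\). Second, if \(\rho_G\) lies in the star but not in \(\sigma\), then \(D_G\) meets \(V(\sigma)\) transversally, and \(\iota_F^*x_G\) is the divisor of the image ray in the quotient fan \(\Star(\sigma)/\operatorname{span}\sigma\). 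Third, if \(G=F_i\) is a ray of \(\sigma\), then \(\iota_F^*x_{F_i}\) is the normal class, which is \(n_i\) by definition.

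\textbf{Step 1: the star rays.} I will identify the star rays via nestedness. The set \(\{G,F_1,\dots,F_\ell\}\) is nested exactly when \(G\) is comparable with each \(F_i\), or is incomparable with them and no join of the incomparable elements lies in \(\B\). For \(G\subseteq F\), the product structure \([\hat0,F]\simeq\prod[\hat0,F_i]\) forces \(G\subseteq F_i\) for some \(i\), since \(G\in\B\) is its own unique factor. For \(G\not\subseteq F\), I expect the star rays to be exactly those with \(G\supsetneq F\); this is the content of \cite[Lemma~5.4]{EFMPV}, and the remaining rays fall in the "otherwise" case. Under the isomorphism \eqref{eq:star-product}:
\begin{itemize}
\item \(v_G\) with \(G\subsetneq F_i\) maps to the ray of \(G\) in \(\Sigma_{M|F_i,\B|F_i}\);
\item \(v_G\) with \(G\supsetneq F\) maps to the ray \(v_{G/F}\) in \(\Sigma_{M/F,\B/F}\), since modulo \(\operatorname{span}(v_{F_1},\dots,v_{F_\ell})\ni v_F\) the vector \(v_G\) becomes the indicator of \(G\setminus F\).
\end{itemize}
This yields the four-case formula. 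I will take the identification of the quotient lattice \(N_E/\langle v_{F_i}\rangle\) with the product of the \(N\)-lattices of the restrictions and the contraction from \cite{EFMPV}, so that divisor classes correspond as claimed.

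\textbf{Step 2: \(\iota_F^*\alpha=\alpha^R\).} I will choose \(j\in E\setminus F\), which is possible because \(F\ne E\). Then \(\alpha=\sum_{G\ni j}x_G\). A flat \(G\ni j\) is not contained in \(F\), so by Step 1 it contributes only if \(G\supsetneq F\), and then it contributes \(x_G^R\). The result is \(\sum_{H\supsetneq F,\,j\in H}x^R_H\), which is the expression for \(\alpha_{M/F}\) using the element \(j\) of the contraction's ground set \(E\setminus F\).

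\textbf{Step 3: relation \eqref{eq:alpha-star-relation}.} I will now choose \(k\in F_i\) and expand \(\alpha=\sum_{G\ni k}x_G\). By Step 1 the contributing flats \(G\ni k\) are of three kinds:
\begin{itemize}
\item \(G\subsetneq F_i\), giving \(x^L_G\) and summing to \(\alpha_i\);
\item \(G=F_i\), giving \(n_i\);
\item \(G\supsetneq F\), giving \(x_G^R\).
\end{itemize}
No other star flat contains \(k\): such a flat would lie inside some \(F_{i'}\) with \(i'\neq i\), and the factors \(F_{i'}\) are disjoint. Comparing with Step 2 gives \(\alpha^R=\alpha_i+n_i+\sum x^R_H\).

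\textbf{Main obstacle.} The delicate point is Step 1: confirming that nothing other than flats inside some \(F_i\) or flats above \(F\) lies in the star, and that the toric restriction matches the star-fan identification on the nose rather than only up to linear equivalence. The relation in Step 3 is a useful check, since it encodes \(n_i\) as a linear combination that must be independent of the chosen \(k\in F_i\).
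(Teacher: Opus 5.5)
Your proposal follows the paper's proof almost verbatim: toric restriction to the orbit closure, then expanding \(\alpha\) at some \(j\in E\setminus F\) and at some \(k\in F_i\). However, the point you flag as the main obstacle is a genuine gap. The expectation in Step 1 is false, and the paper's proof asserts the same incorrect list of surviving rays.

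The star of \(\sigma_F^\B\) can contain rays \(v_G\) with \(G\cap F=\emptyset\). Take the Boolean matroid \(M=U_{4,4}\) on \(E=\{1,2,3,4\}\), with \(\B=\{\{1\},\{2\},\{3\},\{4\},E\}\) and \(F=\{1,2\}\). Then \(\B^+=\B\cup\{F\}\) is a building set and \(\Fac_\B(F)=\{\{1\},\{2\}\}\). The \(\B\)-nested sets are the sets of at most three singletons, so \(X_{M,\B}=\PP^3\) and \(Z_F=D_1\cap D_2\simeq\PP^1\). Since \(\{\{1\},\{2\},\{3\}\}\) is nested, \(\iota_F^*x_{\{3\}}\) is the class of the point \(D_1\cap D_2\cap D_3\), which is nonzero, even though \(\{3\}\) falls under ``otherwise''.

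Step 2 fails for the same reason. With \(j=3\) one has \(\alpha=x_{\{3\}}\), and no \(H\in\B\) satisfies \(F\subsetneq H\subsetneq E\). So your formula gives \(\iota_F^*\alpha=0\), whereas \(\alpha^R\) is the point class of \(\PP^1\). Identifying \(\sum_{H\supsetneq F,\,j\in H}x_H^R\) with \(\alpha_{M/F}\) silently assumes that every element of \(\B/F\setminus\{E/F\}\) has the form \(H/F\) with \(H\in\B\) and \(H\supsetneq F\). Here \(\B/F=\{\{3\},\{4\},\{3,4\}\}\) comes entirely from flats disjoint from \(F\).

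Part of your expectation is correct: flats meeting \(F\) are excluded. Let \(G\in\B\) be incomparable with \(F\) with \(G\cap F\neq\emptyset\). Then \(G\vee F\in\B\) because \(\B^+\) is a building set. Either some \(F_j\not\subseteq G\) meets \(G\), so \(G\vee F_j\in\B\); or \(\{G\}\cup\{F_j:F_j\cap G=\emptyset\}\) is an antichain of size at least two with join \(G\vee F\in\B\). In both cases \(v_G\notin\Star(\sigma_F^\B)\).

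For \(G\cap F=\emptyset\), a similar argument shows that \(v_G\) lies in the star if and only if \(G\vee F\notin\B\). In that case:
\begin{enumerate}
\item \(\Fac_\B(G\vee F)=\{G,F_1,\dots,F_\ell\}\), so \(G\vee F=G\sqcup F\);
\item \(v_G\) maps to \(v_{(G\vee F)/F}\) in the contraction factor;
\item \(\iota_F^*x_G^\B=1\boxtimes x^{\B/F}_{(G\vee F)/F}\).
\end{enumerate}
Under \(G\mapsto(G\vee F)/F\), the flats \(H\supsetneq F\) together with these disjoint flats correspond bijectively to \(\B/F\setminus\{E/F\}\). Moreover, for \(j\notin F\), \(j\in G\) if and only if \(j\in(G\vee F)/F\). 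With this corrected list, Step 2 does give \(\iota_F^*\alpha=\alpha^R\). Step 3 and \eqref{eq:alpha-star-relation} go through unchanged, since the disjoint flats contain no \(k\in F_i\).

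So the last two assertions of the lemma are true. The ``otherwise \(0\)'' clause, however, is false as stated; it needs the extra case \(G\cap F=\emptyset\), \(G\vee F\notin\B\). Downstream, in Proposition~\ref{prop:center-tangent-factorization}, these \(G\) do not restrict to zero. Instead they supply exactly the summands of \(T_{M/F,\B/F}\) not indexed by \(\mathcal C_F\), with no rank shift, so \eqref{eq:star-normal} still holds.
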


\begin{proof}
The first formula is the usual restriction rule for torus-invariant divisors
to an orbit closure.  A divisor indexed by \(G\) restricts nontrivially to
\(V(\sigma_F^\B)\) exactly when the ray \(v_G\), together with
\(\sigma_F^\B\), spans a cone of the nested-set fan.  Under the closed-star
product, the surviving rays are precisely of the following types:
\[
   G\subsetneq F_i,
   \qquad
   G=F_i,
   \qquad
   F\subsetneq G\subsetneq E.
\]
They become, respectively, a divisor on the left factor, a normal line class
\(n_i\), and a divisor on the right factor.

To compute \(\iota_F^*\alpha\), choose \(e\in E\setminus F\).  The class
\(\alpha\) is represented by
\[
   \alpha=\sum_{\substack{H\in\B\\e\in H}}x_H^\B.
\]
After restriction to the star, the surviving terms are exactly those with
\(F\subsetneq H\subsetneq E\), giving \(\iota_F^*\alpha=\alpha^R\).

Now choose \(a_i\in F_i\).  The same class \(\alpha\) is represented by
\[
   \alpha=\sum_{\substack{H\in\B\\a_i\in H}}x_H^\B.
\]
After restriction to the star, the surviving terms are: the terms
\(H\subsetneq F_i\), which sum to \(\alpha_i\); the term \(H=F_i\), which
gives \(n_i\); and the terms \(F\subsetneq H\subsetneq E\), which give the
right-factor sum in \eqref{eq:alpha-star-relation}.  Comparing with
\(\iota_F^*\alpha=\alpha^R\) proves the relation.
\end{proof}

\begin{lemma}[Restriction of cutting classes to the star]
\label{lem:star-pullback-cutting}
The following identities hold in \(\A^1(Z_F)\):
\[
   \iota_F^*\theta_G^\B
   =
   \begin{cases}
      \theta_G^L, & G\subsetneq F_i\text{ for some }i,\\[3pt]
      \alpha_i, & G=F_i,\\[3pt]
      \theta_G^R, & F\subsetneq G\subsetneq E.
   \end{cases}
\]
\end{lemma}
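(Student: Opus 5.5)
We prove the three cases directly from the definition \eqref{eq:theta-def}, writing
\[
   \theta_G^\B=\alpha-\sum_{\substack{H\in\B\\ G\subseteq H\subsetneq E}}x_H^\B ,
\]
and applying the restriction rules of Lemma~\ref{lem:star-pullback-divisors} term by term. The essential point is that only divisors $x_H^\B$ with $H$ nested with $\{F_1,\ldots,F_\ell\}$ survive on $Z_F$. Among the flats $H\supseteq G$ these are: $H$ strictly inside the factor $F_i$ containing $G$, $H=F_i$ itself, and $H$ with $F\subsetneq H\subsetneq E$. Every other $H\supseteq G$ restricts to $0$.

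\emph{Case $F\subsetneq G\subsetneq E$.} Here every $H\supseteq G$ satisfies $H\supsetneq F$, so each $x_H^\B$ restricts to $x_H^R$, and $\iota_F^*\alpha=\alpha^R$. Hence
\[
   \iota_F^*\theta_G^\B=\alpha^R-\sum_{G\subseteq H\subsetneq E}x_H^R .
\]
The flats $H\in\B$ with $G\subseteq H\subsetneq E$ correspond bijectively to the elements $(H\vee F)/F=H/F$ of $\B/F$ containing $G/F$. This is the only bookkeeping to check. It uses that $H\vee F=H$ for $H\supseteq F$, and that every element of $\B/F$ above $G/F$ comes from some $H\in\B$ with $H\supseteq G$, after replacing $H$ by $H\vee F$ when necessary. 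The right-hand side is therefore exactly $1\boxtimes\theta_{G/F}^{\B/F}=\theta_G^R$.

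\emph{Case $G=F_i$.} The surviving $H\supseteq F_i$ are $H=F_i$, which gives $n_i$, and $F\subsetneq H\subsetneq E$, which gives $x_H^R$. A flat $H\supsetneq F_i$ that does not contain $F$ restricts to zero, because $\{H,F_i,\ldots\}$ is not nested with the cone: the same argument as in Lemma~\ref{lem:factor-hyperplane-cancellation} shows $H\vee F\in\B$. So
\[
   \iota_F^*\theta_{F_i}^\B=\alpha^R-n_i-\sum_{F\subsetneq H\subsetneq E}x_H^R ,
\]
and relation \eqref{eq:alpha-star-relation} identifies this with $\alpha_i$.

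\emph{Case $G\subsetneq F_i$.} The surviving $H\supseteq G$ are $H$ with $G\subseteq H\subsetneq F_i$, which give $x_H^L$; $H=F_i$, which gives $n_i$; and $H\supsetneq F$, which give $x_H^R$. Flats $H\supseteq G$ not of these types are either incomparable to $F_i$ with nonempty meet, or contain $F_i$ without containing $F$. In both situations they restrict to zero by the same non-nestedness argument. Using \eqref{eq:alpha-star-relation} to rewrite $\alpha^R-n_i-\sum x_H^R=\alpha_i$, we obtain
\[
   \iota_F^*\theta_G^\B=\alpha_i-\sum_{G\subseteq H\subsetneq F_i}x_H^L .
\]
Since the top flat of $M|F_i$ is $F_i$, this is precisely $\theta_G^{\B|F_i}\boxtimes 1=\theta_G^L$.

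The main obstacle is the justification that every flat $H\supseteq G$ outside the listed types has $\iota_F^*x_H^\B=0$, which is the ``otherwise'' case of Lemma~\ref{lem:star-pullback-divisors}. For the right factor, the additional check is that the indexing sets for $\theta^R$ match those in the contraction building set $\B/F$.
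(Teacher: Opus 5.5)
Your proof is correct and is essentially the paper's argument: expand $\theta_G^\B$, restrict it term by term via Lemma~\ref{lem:star-pullback-divisors}, and in the two left-hand cases convert $\alpha^R-n_i-\sum_{F\subsetneq H\subsetneq E}x_H^R$ into $\alpha_i$ using \eqref{eq:alpha-star-relation}; your version is more explicit than the paper about why flats $H\supseteq G$ incomparable to $F$ restrict to zero. The one bookkeeping claim you leave unjustified, that $(H\vee F)/F\supseteq G/F$ forces $H\vee F\in\B$, does hold: if $G$ and $H$ lay in different $\B$-factors of $X=H\vee F$, the product decomposition of $[\hat 0,X]$ would force $F$ to equal the factor containing $G$, contradicting $F\subsetneq G$.
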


\begin{proof}
We use the definition
\[
   \theta_G^\B
   =
   \alpha-
   \sum_{\substack{H\in\B\\G\subseteq H\subsetneq E}}x_H^\B.
\]
If \(G\subsetneq F_i\), then the surviving terms in the restriction of the
sum are
\[
   \sum_{\substack{H\in\B\\G\subseteq H\subsetneq F_i}}x_H^L,
   \qquad
   n_i,
   \qquad
   \sum_{\substack{H\in\B\\F\subsetneq H\subsetneq E}}x_H^R.
\]
Therefore
\[
\begin{aligned}
   \iota_F^*\theta_G^\B
   &=
   \alpha^R
   -
   \sum_{\substack{H\in\B\\G\subseteq H\subsetneq F_i}}x_H^L
   -n_i
   -
   \sum_{\substack{H\in\B\\F\subsetneq H\subsetneq E}}x_H^R  \\
   &=
   \alpha_i
   -
   \sum_{\substack{H\in\B\\G\subseteq H\subsetneq F_i}}x_H^L
   =
   \theta_G^L,
\end{aligned}
\]
where the second equality uses \eqref{eq:alpha-star-relation}.  If
\(G=F_i\), the same calculation gives
\[
   \iota_F^*\theta_{F_i}^\B
   =
   \alpha^R-n_i-
   \sum_{\substack{H\in\B\\F\subsetneq H\subsetneq E}}x_H^R
   =
   \alpha_i.
\]
Finally, if \(F\subsetneq G\subsetneq E\), the surviving terms are exactly the
right-factor terms \(x_H^R\) with \(G\subseteq H\subsetneq E\), and hence
\[
   \iota_F^*\theta_G^\B
   =
   \alpha^R-
   \sum_{\substack{H\in\B\\G\subseteq H\subsetneq E}}x_H^R
   =
   \theta_G^R.
\]
\end{proof}

\begin{lemma}[Right-factor telescoping]
\label{lem:right-factor-telescoping}
Let
\[
   \mathcal C_F:=\{G\in\B\mid F\subsetneq G\subsetneq E\},
   \qquad
   S_F^R:=\sum_{G\in\mathcal C_F}x_G^R.
\]
Then, in \(\K(Z_F)\),
\begin{equation}\label{eq:right-factor-telescoping}
   [\OO(\alpha^R)]
   +
   \sum_{G\in\mathcal C_F}
      \left(
         [\OO(\theta_G^R)]-[\OO(\theta_G^R+x_G^R)]
      \right)
   =
   [\OO(\alpha^R-S_F^R)].
\end{equation}
Moreover, for every \(i=1,\ldots,\ell\),
\begin{equation}\label{eq:right-factor-telescoping-alpha-i}
   [\OO(\alpha^R-S_F^R)]=[\OO(\alpha_i+n_i)].
\end{equation}
\end{lemma}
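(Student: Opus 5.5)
The second identity \eqref{eq:right-factor-telescoping-alpha-i} is immediate from \eqref{eq:alpha-star-relation}: that relation says \(\alpha_i+n_i=\alpha^R-S_F^R\) in \(\A^1(Z_F)\). Equal divisor classes give equal line-bundle classes in \(\K(Z_F)\). So the real content is \eqref{eq:right-factor-telescoping}, and this is a statement purely on the right factor \(\K(M/F,\B/F)\). Indeed, everything appearing is of the form \(1\boxtimes(\cdot)\), and the set \(\mathcal C_F\) is in bijection with \((\B/F)\setminus\{E/F\}\) via \(G\mapsto G/F\). (Here I use that the surviving right-factor rays are exactly the \(G\in\B\) with \(F\subsetneq G\), as in Lemma~\ref{lem:star-pullback-divisors}.) So I would prove the following lemma for an arbitrary pair \((M',\G')\), and then apply it to \((M/F,\B/F)\):
\[
   [\OO(\alpha)]+\sum_{G\in\G'\setminus\{E\}}\bigl([\OO(\theta_G)]-[\OO(\theta_G+x_G)]\bigr)
   =\Bigl[\OO\Bigl(\alpha-\sum_{G\in\G'\setminus\{E\}}x_G\Bigr)\Bigr].
\]

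To prove this, I would induct by peeling off flats in an order compatible with reverse inclusion, mimicking the telescoping in Proposition~\ref{prop:maximal-building-set}. Enumerate \(\G'\setminus\{E\}=\{G_1,\dots,G_m\}\) so that \(G_j\subsetneq G_k\) implies \(j>k\); that is, larger flats come first. Set \(P_k:=\alpha-\sum_{j\le k}x_{G_j}\), so that \(P_0=\alpha\). The claim is that each summand equals the step \([\OO(P_{k-1})]-[\OO(P_k)]\), i.e.
\[
   [\OO(\theta_{G_k})]-[\OO(\theta_{G_k}+x_{G_k})]=[\OO(P_{k-1})]-[\OO(P_{k-1}-x_{G_k})].
\]
Summing over \(k\) then telescopes to \([\OO(P_m)]-[\OO(\alpha)]\), which is the lemma. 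To check a single step, write both sides as \([\OO(D)]\bigl(1-[\OO(-x_{G_k})]\bigr)\), with \(D=\theta_{G_k}+x_{G_k}\) on the left and \(D=P_{k-1}\) on the right. The difference of the two choices of \(D\) is
\[
   (\theta_{G_k}+x_{G_k})-P_{k-1}=\sum_{j<k,\;G_j\not\supseteq G_k}x_{G_j}.
\]
Every \(G_j\) in this sum is incomparable with \(G_k\), because by the ordering \(G_j\subsetneq G_k\) is impossible for \(j<k\).

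The main obstacle is handling these incomparable pairs. Lemma~\ref{lem:K-SR} lets us absorb \(x_{G_j}\) against the factor \(1-[\OO(-x_{G_k})]\) only when \(\{G_j,G_k\}\) is \emph{not} nested. This is exactly the argument of Lemma~\ref{lem:factor-hyperplane-cancellation}: there, the relation \(m_H(1-[\OO(-x_F)])=1-[\OO(-x_F)]\) lets one multiply the factor \(m_H=[\OO(-x_H)]\) in or out freely. For an arbitrary building set, however, incomparable pairs can be nested. This is precisely why the remark after Proposition~\ref{prop:maximal-building-set} restricts to building sets closed under joins of incomparable pairs.

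So I would first try to organize the telescoping differently, grouping the flats by the \(\G'\)-factors rather than by a linear order. For a nested incomparable pair \(G_j,G_k\), the join is not in \(\G'\) and its factors are disjoint; the hope is that the contributions \([\OO(\theta)]-[\OO(\theta+x)]\) then multiply rather than add. Concretely, I would aim to prove the stronger product identity
\[
   [\OO(\alpha)]\prod_{G}\frac{\cdots}{\cdots}
\]
at the level of the filtration, using that \((1-[\OO(-x_{G_j})])(1-[\OO(-x_{G_k})])\neq0\) only for nested pairs. If that fails, I would fall back on verifying the identity after applying \(\ch\), via Lemma~\ref{lem:ch-detects-K}. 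That would reduce it to the corresponding identity in \(\A(M',\G')_\Q\), to be checked on each maximal cone by localization; on a maximal nested set the relevant divisors form a tree.
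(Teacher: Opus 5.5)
Your derivation of \eqref{eq:right-factor-telescoping-alpha-i} from \eqref{eq:alpha-star-relation} is correct. The plan for \eqref{eq:right-factor-telescoping}, however, has a genuine gap: the general identity you propose to prove for an arbitrary pair $(M',\G')$ is false. So neither a regrouping by factors nor a localization check after $\ch$ can establish it. Take $M'=U_{3,3}$ with $\G'=\{\{1\},\{2\},\{3\},E\}$. Then $X_{M',\G'}=\PP^2$, $x_{\{i\}}=\alpha=h$ and $\theta_{\{i\}}=0$. Writing $L=[\OO(h)]$, your left side is $L+3(1-L)=3-2L$ and your right side is $L^{-2}$. Their Chern characters are $1-2h-h^2$ and $1-2h+2h^2$, which differ. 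The obstruction you noticed, incomparable pairs that are nested, is therefore real and not an artifact of the telescoping method.

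The missing observation is that this obstruction never arises inside $\mathcal C_F$. Every $G\in\mathcal C_F$ contains $F\neq\emptyset$, so two incomparable $G,H\in\mathcal C_F$ have nonzero meet. Hence $G\vee H\in\B$, by the building-set property already used in Lemma~\ref{lem:factor-hyperplane-cancellation}, and so $\{G,H\}$ is not $\B$-nested. Applying $\iota_F^*$ to the relation of Lemma~\ref{lem:K-SR} and using Lemma~\ref{lem:star-pullback-divisors} then gives $(1-[\OO(-x_G^R)])(1-[\OO(-x_H^R)])=0$ in $\K(Z_F)$. With this in hand your linear-extension telescoping goes through verbatim, since every $G_j$ in your difference is incomparable with $G_k$ and both contain $F$. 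It is then essentially the paper's proof, which expands $1-\prod_{G\in\mathcal C_F}[\OO(-x_G^R)]$, discards non-chain terms, and groups chains by their minimal element.

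Two smaller corrections. First, each summand equals $[\OO(P_k)]-[\OO(P_{k-1})]$, not $[\OO(P_{k-1})]-[\OO(P_k)]$; only the former telescopes to the total $[\OO(P_m)]-[\OO(\alpha)]$ that you want. Second, keep the sum indexed by $\mathcal C_F$ rather than rephrasing it on $(M/F,\B/F)$. With $\B/F$ as defined in Section~\ref{subsec:onestep}, the bijection $\mathcal C_F\leftrightarrow(\B/F)\setminus\{E/F\}$ can fail. Take the Boolean matroid on $\{1,\dots,5\}$, with $\B$ consisting of the singletons and $E$, and $F=\{1,2\}$, so that $\B\cup\{F\}$ is again a building set. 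Then $\mathcal C_F=\emptyset$, but $\B/F$ is the $\PP^2$ building set on $\{3,4,5\}$, where your general identity is exactly the false one above.
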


\begin{proof}
Set
\[
   m_G:=[\OO(-x_G^R)]
   \qquad (G\in\mathcal C_F).
\]
If \(G,H\in\mathcal C_F\) are incomparable, then \(G/F\) and \(H/F\) are not
\(\B/F\)-nested.  Hence the \(K\)-theoretic Stanley--Reisner relation gives
\[
   (1-m_G)(1-m_H)=0.
\]
Consequently, any product \(\prod_{G\in S}(1-m_G)\) vanishes unless
\(S\subseteq\mathcal C_F\) is a chain.

We claim that
\begin{equation}\label{eq:chain-telescope}
   1-\prod_{G\in\mathcal C_F}m_G
   =
   \sum_{G\in\mathcal C_F}
      (1-m_G)
      \prod_{\substack{H\in\mathcal C_F\\G\subsetneq H}}m_H.
\end{equation}
To prove this, write \(u_G:=1-m_G\), so \(m_G=1-u_G\).  Expanding the
left-hand side gives
\[
   \sum_{\emptyset\neq S\subseteq\mathcal C_F}
      (-1)^{|S|+1}\prod_{G\in S}u_G.
\]
All non-chain terms vanish.  On the right-hand side, the summand indexed by
\(G\) is
\[
   u_G\prod_{G\subsetneq H}(1-u_H).
\]
After discarding non-chain terms, it contributes exactly the chain terms
whose minimal element is \(G\), with coefficient \((-1)^{|S|-1}\).  Every
nonempty chain has a unique minimal element, so the two expansions agree.

Multiplying \eqref{eq:chain-telescope} by \([\OO(\alpha^R)]\), and using
\[
   \theta_G^R
   =
   \alpha^R-
   \sum_{\substack{H\in\mathcal C_F\\G\subseteq H}}x_H^R,
   \qquad
   \theta_G^R+x_G^R
   =
   \alpha^R-
   \sum_{\substack{H\in\mathcal C_F\\G\subsetneq H}}x_H^R,
\]
gives
\[
   [\OO(\alpha^R)]-[\OO(\alpha^R-S_F^R)]
   =
   \sum_{G\in\mathcal C_F}
      \left(
         [\OO(\theta_G^R+x_G^R)]-[\OO(\theta_G^R)]
      \right).
\]
Rearranging gives \eqref{eq:right-factor-telescoping}.  Finally,
\eqref{eq:alpha-star-relation} gives \(\alpha_i+n_i=\alpha^R-S_F^R\), and
therefore \eqref{eq:right-factor-telescoping-alpha-i}.
\end{proof}

\begin{definition}[Center tangent class]
\label{def:center-tangent-class}
We define the center tangent class by
\[
   \widehat T_{Z_F}:=\iota_F^*T_{M,\B}-N_F\in\K(Z_F).
\]
\end{definition}

\begin{proposition}[Star factorization of the center tangent class]
\label{prop:center-tangent-factorization}
Under the star-fan identification,
\[
   \widehat T_{Z_F}
   =
   T_{M|F,\B|F}\boxplus T_{M/F,\B/F}.
\]
Equivalently,
\begin{equation}\label{eq:star-normal}
   \iota_F^*T_{M,\B}
   =
   T_{M|F,\B|F}\boxplus T_{M/F,\B/F}+N_F.
\end{equation}
\end{proposition}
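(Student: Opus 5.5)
We compute \(\iota_F^*T_{M,\B}\) summand by summand from Definition~\ref{def:tangent}, using Lemmas~\ref{lem:star-pullback-divisors} and~\ref{lem:star-pullback-cutting}. Then we regroup the result into the left factor, the right factor and the normal part \(N_F=\sum_i[\OO(n_i)]\). The flats \(G\in\B\setminus\{E\}\) split into four groups: (a) \(G\subsetneq F_i\) for some \(i\); (b) \(G=F_i\); (c) \(F\subsetneq G\subsetneq E\); (d) everything else. For a flat \(G\) of type (d), \(x_G\) restricts to \(0\), so its boundary term \([\OO(x_G)]-[\OO]\) dies and its correction term \(\rk(G)([\OO(\iota^*\theta_G)]-[\OO(\iota^*\theta_G)])\) vanishes identically. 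So only (a), (b), (c) and the Euler term \(r[\OO(\alpha)]-[\OO]\), which restricts to \(r[\OO(\alpha^R)]-[\OO]\), survive.

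\emph{Right factor.} By Lemma~\ref{lem:star-pullback-cutting}, the type (c) summands restrict to
\[
\sum_{G\in\mathcal C_F}\Bigl([\OO(x_G^R)]-[\OO]+\rk(G)\bigl([\OO(\theta_G^R)]-[\OO(\theta_G^R+x_G^R)]\bigr)\Bigr).
\]
The pair \((M/F,\B/F)\) has rank \(r-\rk(F)\), and \(\rk_{M/F}(G/F)=\rk(G)-\rk(F)\). Hence \(T_{M/F,\B/F}\), written with \(1\boxtimes\), equals \((r-\rk F)[\OO(\alpha^R)]-[\OO]\) plus the same sum with \(\rk(G)\) replaced by \(\rk(G)-\rk(F)\). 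Subtracting, we find that the type (c) terms together with the Euler term equal
\[
T_{M/F,\B/F}+\rk(F)\Bigl([\OO(\alpha^R)]+\sum_{G\in\mathcal C_F}\bigl([\OO(\theta_G^R)]-[\OO(\theta_G^R+x_G^R)]\bigr)\Bigr).
\]
By Lemma~\ref{lem:right-factor-telescoping} the bracket is \([\OO(\alpha^R-S_F^R)]=[\OO(\alpha_i+n_i)]\) for any \(i\). Using \(\rk(F)=\sum_i\rk(F_i)\), the leftover becomes \(\sum_i\rk(F_i)[\OO(\alpha_i+n_i)]\).

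\emph{Left factors and normal part.} The type (b) summand for \(F_i\) restricts to \([\OO(n_i)]-[\OO]+\rk(F_i)\bigl([\OO(\alpha_i)]-[\OO(\alpha_i+n_i)]\bigr)\). Adding the leftover \(\rk(F_i)[\OO(\alpha_i+n_i)]\) from the right-factor step cancels the \(n_i\)-twisted term. What remains is \([\OO(n_i)]-[\OO]+\rk(F_i)[\OO(\alpha_i)]\). The type (a) summands with \(G\subsetneq F_i\) restrict to exactly the flat-indexed summands of \(T_{M|F_i,\B|F_i}\), by Lemma~\ref{lem:star-pullback-cutting}. The pair \((M|F_i,\B|F_i)\) has rank \(\rk(F_i)\) and top flat \(F_i\), so these type (a) terms together with \(\rk(F_i)[\OO(\alpha_i)]-[\OO]\) form \(T_{M|F_i,\B|F_i}\). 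The extra \([\OO(n_i)]\) summed over \(i\) is \(N_F\). This gives \eqref{eq:star-normal}, reading \(\boxplus\) as external sum, i.e. pullbacks added, with the product convention for \(T_{M|F,\B|F}\).

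\emph{Main obstacle.} The delicate step is the bookkeeping of the constant terms \(-[\OO]\). The ambient class contributes one \(-[\OO]\) from its Euler term, together with \(\ell\) copies of \(-[\OO]\) from the boundary terms of the \(F_i\). The target \(\sum_iT_{M|F_i}+T_{M/F}+N_F\) needs \(\ell+1\) copies of \(-[\OO]\), one for each factor tangent class, and none from \(N_F\). We must check that these counts agree; as stated they do, giving \(\ell+1\) on each side. A second point to verify is the rank accounting \(r[\OO(\alpha^R)]=(r-\rk F)[\OO(\alpha^R)]+\sum_i\rk(F_i)[\OO(\alpha^R)]\), which feeds into the telescoping. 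Finally, for \(\rk(F_i)=1\) with \(\B|F_i\) trivial, we must check the degenerate case where \(\alpha_i=0\) and \(T_{M|F_i}=0\); it is consistent because then \(\rk(F_i)[\OO(\alpha_i)]-[\OO]=0\).
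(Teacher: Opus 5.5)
Your proposal follows the paper's own proof step for step. It uses the same split of \(\B\setminus\{E\}\) into types (a)--(d), the same Lemmas~\ref{lem:star-pullback-divisors}, \ref{lem:star-pullback-cutting} and~\ref{lem:right-factor-telescoping}, and the same cancellation of \(\sum_i\rk(F_i)[\OO(\alpha_i+n_i)]\). It also inherits a gap that the paper's proof has.

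\textbf{The gap.} Two of your assertions fail in general:
\begin{enumerate}
\item that \(\iota_F^*x_G^\B=0\) for every \(G\) of type (d);
\item that the flat-indexed summands of \(T_{M/F,\B/F}\) are indexed by \(\mathcal C_F\).
\end{enumerate}
By definition \(\B/F=\{(G\vee F)/F: G\in\B,\ G\not\subseteq F\}\). This set also contains \((G\vee F)/F\) for \(G\in\B\) disjoint from \(F\) with \(G\vee F\notin\B\).

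\textbf{A counterexample.} Take the Boolean matroid on \(E=\{1,\dots,5\}\), with \(\B=\{\{1\},\dots,\{5\},\{3,4\},E\}\) and \(F=\{1,2\}\).
\begin{itemize}
\item The set \(\{\{1\},\{2\},\{3,4\}\}\) is \(\B\)-nested, so \(v_{\{3,4\}}\) lies in the star of \(\sigma_F^\B\).
\item Here \(X_{M,\B}=\Bl_{\PP^2}\PP^4\) and \(Z_F\simeq\Bl_{\mathrm{pt}}\PP^2\). Write \(h=\iota_F^*\alpha\).
\item \(\iota_F^*x_{\{3,4\}}=e\) is the exceptional curve. So the type-(d) summand for \(\{3,4\}\) restricts to \([\OO(e)]-[\OO]+2([\OO(h-e)]-[\OO(h)])\neq0\).
\item \(\mathcal C_F=\emptyset\), while \(\B/F\setminus\{E/F\}=\{\{3\},\{4\},\{5\},\{3,4\}\}\).
\item Your description gives \(T_{M/F,\B/F}=3[\OO(h)]-[\OO]\), with \(c_1=3h\). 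The actual class has \(c_1=3h-e\).
\end{itemize}
The two omissions cancel exactly, which is why \eqref{eq:star-normal} still holds. But the argument as written does not prove it.

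\textbf{The repair.} Split type (d) into three cases.
\begin{itemize}
\item \emph{\(G\) incomparable with \(F\) and \(G\cap F\neq\emptyset\).} If \(G\) meets some \(F_j\not\subseteq G\), then \(G\vee F_j\in\B\). Otherwise some \(F_j\subseteq G\). Then the antichain \(\{G\}\cup\{F_k:F_k\not\subseteq G\}\) has join \(G\vee F\in\B\), because \(G\) and \(F\) are intersecting elements of \(\B^+\). Either way \(\{G\}\cup\Fac_\B(F)\) is not nested, so \(\iota_F^*x_G^\B=0\) and the summand dies.
\item \emph{\(G\cap F=\emptyset\) and \(\{G\}\cup\Fac_\B(F)\) not nested.} The summand again dies.
\item \emph{\(G\cap F=\emptyset\) and \(\{G,F_1,\dots,F_\ell\}\) nested.} This antichain is then \(\Fac_\B(G\vee F)\), so \(\rk_{M/F}((G\vee F)/F)=\rk(G)\). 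Moreover \(\iota_F^*x_G^\B=1\boxtimes x_{(G\vee F)/F}\) and \(\iota_F^*\theta_G^\B=1\boxtimes\theta_{(G\vee F)/F}\). For the second identity, note that a star ray \(H\) with \((H\vee F)/F\supseteq(G\vee F)/F\) contains \(G\), since \(G\) lies in a \(\B\)-factor of \(H\vee F\). Hence the \(G\)-summand restricts exactly to the \((G\vee F)/F\)-summand of \(T_{M/F,\B/F}\), with no \(\rk(F)\)-shift.
\end{itemize}
The flats in the third case, together with \(\mathcal C_F\), index all of \(\B/F\setminus\{E/F\}\).

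The relation \eqref{eq:alpha-star-relation} and the telescoping over \(\mathcal C_F\) are unaffected. No such \(G\) contains \(a_i\in F_i\), and no \((G\vee F)/F\) contains \(H/F\) with \(H\in\mathcal C_F\). So the rest of your cancellation goes through. The paper's proof of \(\iota_F^*\alpha=\alpha^R\) has the same omission, although that identity is true.
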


\begin{proof}
It is enough to prove \eqref{eq:star-normal}.  Set
\[
   f_i:=\rk(F_i),
   \qquad
   f:=\rk(F)=\sum_{i=1}^{\ell}f_i,
   \qquad
   r:=\rk(E).
\]
We expand \(\iota_F^*T_{M,\B}\) from \eqref{eq:intrinsic-T-def} and group
the summands according to the position of \(G\in\B\setminus\{E\}\) relative
to \(F\).

The Euler term restricts as
\begin{equation}\label{eq:euler-split-star-normal}
   \iota_F^*\bigl(r[\OO(\alpha)]-[\OO]\bigr)
   =
   r[\OO(\alpha^R)]-[\OO]
   =
   \bigl((r-f)[\OO(\alpha^R)]-[\OO]\bigr)
   +
   f[\OO(\alpha^R)].
\end{equation}
The term in parentheses is the Euler term of \(T_{M/F,\B/F}\).

If \(G\subsetneq F_i\), Lemmas~\ref{lem:star-pullback-divisors} and
\ref{lem:star-pullback-cutting} give
\[
   \iota_F^*x_G^\B=x_G^L,
   \qquad
   \iota_F^*\theta_G^\B=\theta_G^L.
\]
All such summands give the non-Euler part of \(T_{M|F,\B|F}\).

If \(G=F_i\), then
\[
   \iota_F^*x_{F_i}^\B=n_i,
   \qquad
   \iota_F^*\theta_{F_i}^\B=\alpha_i.
\]
The \(F_i\)-summand contributes
\[
   [\OO(n_i)]-[\OO]
   +
   f_i\left([\OO(\alpha_i)]-[\OO(\alpha_i+n_i)]\right).
\]
Equivalently, this is
\[
   \bigl(f_i[\OO(\alpha_i)]-[\OO]\bigr)
   +
   [\OO(n_i)]
   -
   f_i[\OO(\alpha_i+n_i)].
\]
After summing over \(i\), the first terms give the Euler part of
\(T_{M|F,\B|F}\), the middle terms give \(N_F\), and the leftover is
\begin{equation}\label{eq:leftover-factor-terms}
   -\sum_{i=1}^{\ell}f_i[\OO(\alpha_i+n_i)].
\end{equation}

If \(F\subsetneq G\subsetneq E\), then
\[
   \iota_F^*x_G^\B=x_G^R,
   \qquad
   \iota_F^*\theta_G^\B=\theta_G^R.
\]
Moreover,
\[
   \rk_M(G)=f+\rk_{M/F}(G/F).
\]
Thus the \(\rk_{M/F}(G/F)\)-part, together with the terms
\([\OO(x_G^R)]-[\OO]\), is the non-Euler part of \(T_{M/F,\B/F}\).  The
remaining rank-shift contribution is
\begin{equation}\label{eq:rank-shift-star-normal}
   f
   \sum_{G\in\mathcal C_F}
      \left(
         [\OO(\theta_G^R)]-[\OO(\theta_G^R+x_G^R)]
      \right).
\end{equation}

Finally, if \(G\) is neither contained in \(F\) nor contains \(F\), then
\(\iota_F^*x_G^\B=0\), and the corresponding summand restricts to zero.

It remains to cancel the extra terms.  Combining the second term in
\eqref{eq:euler-split-star-normal} with \eqref{eq:rank-shift-star-normal},
Lemma~\ref{lem:right-factor-telescoping} gives
\[
\begin{aligned}
   &f[\OO(\alpha^R)]
   +
   f\sum_{G\in\mathcal C_F}
      \left(
         [\OO(\theta_G^R)]-[\OO(\theta_G^R+x_G^R)]
      \right) \\
   &\hspace{3cm}=
   f[\OO(\alpha^R-S_F^R)].
\end{aligned}
\]
By \eqref{eq:right-factor-telescoping-alpha-i}, this is equal to
\(f[\OO(\alpha_i+n_i)]\) for every \(i\).  Since \(f=\sum_i f_i\), it is
also
\[
   \sum_{i=1}^{\ell}f_i[\OO(\alpha_i+n_i)],
\]
which cancels exactly with \eqref{eq:leftover-factor-terms}.  The remaining
terms are precisely
\[
   T_{M|F,\B|F}\boxplus T_{M/F,\B/F}+N_F.
\]
This proves \eqref{eq:star-normal}, and the stated factorization follows
from the definition of \(\widehat T_{Z_F}\).
\end{proof}

\subsection{Proof of tangent realization}

\begin{proof}[Proof of Theorem~\ref{thm:tangent-realization}]
We prove the theorem by descending induction along a one-step chain
\[
   \G=\G_0\subset\G_1\subset\cdots\subset\G_N=\Gmax.
\]
For \(\G_N=\Gmax\), the equality
\[
   \Hy(M,\Gmax)=\Phi_y(T_{M,\Gmax})
\]
is the definition \eqref{eq:Hy-max-def}.

Assume the result is known for \(\B^+=\B\cup\{F\}\) and for the induced
building sets on \(M|F\) and \(M/F\).  Proposition~\ref{prop:relative-Aluffi}
gives the Aluffi \(K\)-recursion for
\(\rho_F:X_{M,\B^+}\to X_{M,\B}\), and
Proposition~\ref{prop:center-tangent-factorization} gives
\[
   T_{M|F,\B|F}\boxplus T_{M/F,\B/F}
   =
   \iota_F^*T_{M,\B}-N_F.
\]
Thus the hypotheses of Lemma~\ref{lem:universal-one-step-HRR} are satisfied
with
\[
   \T_X=T_{M,\B},
   \qquad
   \T_{\widetilde X}=T_{M,\B^+},
   \qquad
   \T_Z=T_{M|F,\B|F}\boxplus T_{M/F,\B/F}.
\]
The lemma gives
\[
\begin{aligned}
   \Phi_y(T_{M,\B})
   =&\;(\rho_F)_*\Phi_y(T_{M,\B^+})        \\
   &-
   q_\ell(y)(\iota_F)_*
   \Phi_y(T_{M|F,\B|F}\boxplus T_{M/F,\B/F}).
\end{aligned}
\]
By multiplicativity of \(\Phi_y\),
\[
   \Phi_y(T_{M|F,\B|F}\boxplus T_{M/F,\B/F})
   =
   \Phi_y(T_{M|F,\B|F})\boxtimes\Phi_y(T_{M/F,\B/F}).
\]
Using the induction hypothesis on \((M,\B^+)\), \((M|F,\B|F)\), and
\((M/F,\B/F)\), the preceding identity is exactly the recursive definition
\eqref{eq:Hy-recursion} of \(\Hy(M,\B)\).  Hence
\[
   \Phi_y(T_{M,\B})=\Hy(M,\B),
\]
and the induction is complete.

Setting \(y=0\) gives \(\Phi_0(T_{M,\G})=\td(T_{M,\G})\).  On the other hand,
\(q_\ell(0)=0\), so the recursive definition of \(\Hy\) at \(y=0\) is the
pushforward definition \eqref{eq:formal-Todd-def} of \(\Td_{M,\G}\).  Thus
\(\Td_{M,\G}=\td(T_{M,\G})\).  The Chow-polynomial formula follows from
Proposition~\ref{prop:formal-Chow-polynomial}.
\end{proof}

Theorem \ref{thm:tangent-realization} gives the promised Hirzebruch-genus interpretation of the Chow polynomial.
\begin{corollary}[Chow polynomial as a Hirzebruch genus]
\label{cor:tangent-Chow-polynomial}
For every building set \(\G\),
\[
   \deg_{M,\G}
   \left(
      \ch(\lambda_yT_{M,\G}^\vee)\td(T_{M,\G})
   \right)\bigg|_{y=-t}
   =
   H_M^\G(t).
\]
Equivalently,
\[
   \dim_\Q\A^p(M,\G)
   =
   (-1)^p
   \deg_{M,\G}
   \left(
      [y^p]\,\ch(\lambda_yT_{M,\G}^\vee)\td(T_{M,\G})
   \right).
\]
\end{corollary}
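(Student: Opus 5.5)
This corollary follows directly by combining Theorem~\ref{thm:tangent-realization} with Proposition~\ref{prop:formal-Chow-polynomial}. The plan is to first invoke Theorem~\ref{thm:tangent-realization}, which gives the identity
\[
   \Hy(M,\G)=\Phi_y(T_{M,\G})=\ch(\lambda_yT_{M,\G}^\vee)\td(T_{M,\G})
\]
in \(\A(M,\G)_\Q[y]\). Since both sides are polynomials in \(y\) with coefficients in \(\A(M,\G)_\Q\), substituting \(y=-t\) and applying the \(\Q\)-linear map \(\deg_{M,\G}\) coefficientwise preserves the equality. Proposition~\ref{prop:formal-Chow-polynomial} then identifies \(\deg_{M,\G}(\Hy(M,\G)|_{y=-t})\) with \(H_M^\G(t)\), which gives the first displayed formula.

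For the equivalent coefficientwise form, I will write \(\Phi_y(T_{M,\G})=\sum_p\Theta^p_{M,\G}y^p\) with \(\Theta^p_{M,\G}=[y^p]\,\ch(\lambda_yT_{M,\G}^\vee)\td(T_{M,\G})\). Under the substitution \(y=-t\), the coefficient of \(t^p\) becomes \((-1)^p\deg_{M,\G}\Theta^p_{M,\G}\). Comparing coefficients of \(t^p\) with \(H_M^\G(t)=\sum_p\dim_\Q\A^p(M,\G)t^p\) yields the second formula. This is also exactly the equivalent statement already recorded in Proposition~\ref{prop:formal-Chow-polynomial}, now transported through the theorem.

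There is no real obstacle here. The only point needing care is well-definedness: \(\Hy(M,\G)\) was defined using a chosen chain of one-step enlargements, and Proposition~\ref{prop:formal-Chow-polynomial} was proved for that recursively defined class. This is harmless. The proof of Theorem~\ref{thm:tangent-realization} shows \(\Hy(M,\G)=\Phi_y(T_{M,\G})\) for any choice of chain, and \(T_{M,\G}\) is intrinsic by Definition~\ref{def:tangent}. Hence the class, and therefore its degree, is independent of the chain, and the corollary holds as stated.
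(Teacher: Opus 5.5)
Your proposal is correct and follows the paper's own argument: the corollary is Theorem~\ref{thm:tangent-realization} combined with Proposition~\ref{prop:formal-Chow-polynomial}, and the coefficientwise form follows by comparing coefficients of \(t^p\). Your remark on chain-independence also matches the paper, which observes that the theorem makes \(\Hy(M,\G)\) independent of the chosen chain.
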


\begin{proof}
This is Theorem~\ref{thm:tangent-realization} together with
Proposition~\ref{prop:formal-Chow-polynomial}.
\end{proof}
\begin{remark}
\label{rem:gamma-coefficients}
The standard expansion of the Hirzebruch \(\chi_y\)-genus around \(y = -1\) (see \cite{NR_Prym_75} or \cite[Lemma~4.1]{ChengLinInequalities}) relates the Chow polynomial and the Chern classes of \(T_{M, \G}\). In particular, the \(\gamma\)-coefficient can be expressed by the Chern numbers (see \cite[Remark 3.35]{ChengLinInequalities}).
\end{remark}

\subsection{Numerical consequence from the geometric picture}
\label{subsec:consequences}

Let \(d:=r-1\). 
\begin{corollary}[Formal canonical class]
\label{cor:canonical-class}
The formal canonical class associated to the tangent class is
\begin{equation}\label{eq:formal-canonical-class}
   K_{M,\G}:=-c_1(T_{M,\G})
   =
   -r\alpha+
   \sum_{F\in\G\setminus\{E\}}(\rk(F)-1)x_F.
\end{equation}
Equivalently,
\[
   \omega_{M,\G}:=\OO(K_{M,\G})=\det(T_{M,\G}^\vee).
\]
Moreover, if \(\B^+=\B\cup\{F\}\) is a one-step enlargement and
\(\ell=|\Fac_\B(F)|\), then
\[
   K_{M,\B^+}=\rho_F^*K_{M,\B}+(\ell-1)x_F.
\]
\end{corollary}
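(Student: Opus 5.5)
The formula comes from reading off the degree-one part of the total Chern class \eqref{eq:intrinsic-cT}. Taking \(c_1\) is additive over products, so
\[
   c_1(T_{M,\G})
   =
   r\alpha
   +
   \sum_{F\in\G\setminus\{E\}}
   \Bigl(x_F+\rk(F)\bigl(\theta_F^\G-(\theta_F^\G+x_F)\bigr)\Bigr)
   =
   r\alpha-\sum_{F\in\G\setminus\{E\}}(\rk(F)-1)x_F .
\]
This is just the observation that \(c_1([\OO(D)])=D\) and that \(c_1\) is additive on \(K\)-classes, applied directly to Definition~\ref{def:tangent}. The cutting classes \(\theta_F^\G\) cancel identically, so no Stanley--Reisner relations are needed. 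Negating gives \eqref{eq:formal-canonical-class}.

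For the determinant statement, I will use that \(\det\) is a homomorphism from \(\K(M,\G)\) to the Picard group, with \(\det[\OO(D)]=\OO(D)\) and \(\det(\xi^\vee)=\det(\xi)^{-1}\). This gives \(\det(T_{M,\G}^\vee)=\OO(-c_1(T_{M,\G}))=\OO(K_{M,\G})\). Since \(X_{M,\G}\) is smooth, line bundles are determined by \(c_1\) in \(\A^1(M,\G)\), which justifies the identification.

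For the one-step formula, there are two routes; I would do the second and use the first as a cross-check.

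The first route takes \(c_1\) of the relative Aluffi identity \eqref{eq:relative-Aluffi-K} in Proposition~\ref{prop:relative-Aluffi}. The correction term has \(c_1\) equal to
\[
   x_F+\sum_{i=1}^{\ell}\bigl(x_{F_i}^{\B^+}-(x_{F_i}^{\B^+}+x_F)\bigr)=(1-\ell)x_F,
\]
so \(c_1(T_{M,\B^+})=\rho_F^*c_1(T_{M,\B})+(1-\ell)x_F\). Negating this gives the claim.

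The second route compares the explicit formulas directly. I will substitute Lemma~\ref{lem:pullback-boundary-divisors} together with \(\rho_F^*\alpha=\alpha\) into \(\rho_F^*K_{M,\B}\). The \(x_F\)-terms from the factors contribute \(\sum_i(\rk(F_i)-1)x_F=(\rk(F)-\ell)x_F\), using \(\rk(F)=\sum_i\rk(F_i)\). Comparing with the new term \((\rk(F)-1)x_F\) in \(K_{M,\B^+}\) leaves \((\ell-1)x_F\).

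No step is a real obstacle. The only point needing care is the sign bookkeeping in \(c_1\) of the Aluffi correction.
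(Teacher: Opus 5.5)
Your proposal is correct. The computation of \(c_1(T_{M,\G})\) directly from Definition~\ref{def:tangent} is exactly the paper's argument. So is your first route for the one-step formula, which takes \(c_1\) of Proposition~\ref{prop:relative-Aluffi}.

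Your preferred second route is a genuine, if mild, alternative. It uses only three inputs: Lemma~\ref{lem:pullback-boundary-divisors}, the identity \(\rho_F^*\alpha=\alpha\) (recorded in the proof of Proposition~\ref{prop:relative-Aluffi}), and \(\rk(F)=\sum_i\rk(F_i)\). It therefore never touches the \(K\)-theoretic Stanley--Reisner cancellation of Lemma~\ref{lem:factor-hyperplane-cancellation}, on which Proposition~\ref{prop:relative-Aluffi} rests. This shows that the canonical-class recursion is a purely linear identity in \(\A^1\): at the level of \(c_1\) the cutting classes drop out, just as in your first paragraph.

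The paper's route is shorter once Proposition~\ref{prop:relative-Aluffi} is available. It also presents the formula as the \(c_1\)-shadow of the Aluffi correction. That makes the match with the classical blow-up formula \(K_{\widetilde X}=\rho^*K_X+(\operatorname{codim}Z-1)E_{\mathrm{exc}}\) transparent, since the center \(Z_F\) has codimension \(\ell\).
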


\begin{proof}
The formula for \(K_{M,\G}\) follows by taking first Chern classes in
Definition~\ref{def:tangent}.  For the one-step formula, take first Chern
classes in Proposition~\ref{prop:relative-Aluffi}.  The relative correction
has first Chern class
\[
   x_F+
   \sum_{i=1}^{\ell}
      \bigl(x_{F_i}^{\B^+}-(x_{F_i}^{\B^+}+x_F)\bigr)
   =
   -(\ell-1)x_F.
\]
Thus
\[
   c_1(T_{M,\B^+})=
   \rho_F^*c_1(T_{M,\B})-(\ell-1)x_F,
\]
and the canonical formula follows by multiplying by \(-1\).
\end{proof}

\begin{corollary}[Formal Serre duality]
\label{cor:formal-serre-duality}
For every \(\xi\in\K(M,\G)\),
\[
   \chi_{M,\G}(\xi)
   =
   (-1)^d
   \chi_{M,\G}
   \left(
      \xi^\vee\otimes\omega_{M,\G}
   \right).
\]
\end{corollary}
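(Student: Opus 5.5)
By Proposition~\ref{prop:formal-HRR} and Theorem~\ref{thm:tangent-realization}, both sides are degrees in \(\A(M,\G)_\Q\) against the same Todd class:
\[
   \chi_{M,\G}(\xi)=\int_{M,\G}\ch(\xi)\td(T_{M,\G}).
\]
Serre duality then reduces to a purely formal Chern-root computation. Write \(\td(T_{M,\G})\) as a product \(\prod_a Q_0(u_a)\) over formal Chern roots \(u_a\) of \(T_{M,\G}\). Here \(Q_0(u)=u/(1-e^{-u})\), and the Chern roots make sense because \(T_{M,\G}\) is a \(K\)-class of virtual rank \(d\). The splitting principle applies to differences of line-bundle classes, so \(\Phi\)-type multiplicative classes are well defined. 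The key identity is \(Q_0(-u)=e^{-u}Q_0(u)\), which gives
\[
   \td(T_{M,\G}^\vee)=\ch\bigl(\det T_{M,\G}^\vee\bigr)\td(T_{M,\G})
   =\ch(\omega_{M,\G})\td(T_{M,\G}).
\]
This uses Corollary~\ref{cor:canonical-class}, namely \(\omega_{M,\G}=\det(T_{M,\G}^\vee)\) with \(c_1(\omega_{M,\G})=-c_1(T_{M,\G})\).

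Next, let \(\psi\) be the grading involution on \(\A(M,\G)_\Q\), acting by \((-1)^p\) on \(\A^p\). Then \(\ch(\xi^\vee)=\psi\,\ch(\xi)\). Likewise \(\td(T^\vee)=\psi\,\td(T)\), since \(\td(T)\) is a power series in the Chern roots and replacing each root by its negative acts by \((-1)^p\) in degree \(p\). Since \(\psi\) is a ring automorphism and \(\deg_{M,\G}\) is supported in top degree \(d\), for any class \(c\) we have \(\int_{M,\G}\psi(c)=(-1)^d\int_{M,\G}c\). Putting these together:
\[
\begin{aligned}
   (-1)^d\chi_{M,\G}(\xi^\vee\otimes\omega_{M,\G})
   &=(-1)^d\int_{M,\G}\ch(\xi^\vee)\ch(\omega_{M,\G})\td(T_{M,\G})\\
   &=(-1)^d\int_{M,\G}\psi\bigl(\ch(\xi)\bigr)\td(T_{M,\G}^\vee)\\
   &=(-1)^d\int_{M,\G}\psi\bigl(\ch(\xi)\td(T_{M,\G})\bigr)\\
   &=\chi_{M,\G}(\xi).
\end{aligned}
\]
Here we used multiplicativity of \(\ch\) and that \(\xi\mapsto\xi^\vee\) is a ring map on \(\K(M,\G)\).

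The main point to check carefully is that the identity \(\td(T^\vee)=\ch(\det T^\vee)\td(T)\) is valid for a virtual class rather than a genuine bundle. Both sides are multiplicative in \(T\), and \(T_{M,\G}\) is an integer combination of line-bundle classes by Definition~\ref{def:tangent}, so it suffices to verify the identity for a single line bundle \(\OO(D)\). There it is the one-variable identity \(Q_0(-u)=e^{-u}Q_0(u)\). One must also check that \(\det\) is multiplicative on such combinations: \(\det\) of a sum is the product of determinants, and \(\det(-L)=L^{-1}\). With that in place, the remaining steps are bookkeeping.
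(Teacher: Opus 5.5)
Your proof is correct and follows the paper's route. You reduce via formal HRR and Theorem~\ref{thm:tangent-realization} to an integral against \(\td(T_{M,\G})\), use \(e^{-c_1(T)}\td(T)=\td(T^\vee)\) (the root-wise identity \(Q_0(-u)=e^{-u}Q_0(u)\)), and note that the integrand for \(\xi^\vee\otimes\omega_{M,\G}\) is the degree-dual of the one for \(\xi\). Your extra check that these identities hold for the virtual class \(T_{M,\G}\), by reducing to line-bundle summands, usefully justifies the Chern-root manipulation the paper leaves implicit.
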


\begin{proof}
The proof is almost the same as the proof of \cite[Corollary 3.22]{ChengTangent}. By Theorem~\ref{thm:tangent-realization} and Proposition~\ref{prop:formal-HRR},
\[
   \chi_{M,\G}(\xi)
   =
   \int_{M,\G}\ch(\xi)\td(T_{M,\G}).
\]
Write \(T=T_{M,\G}\), and let \(u_1,\ldots,u_d\) be its Chern roots.  Then
\[
   \ch(\omega_{M,\G})=e^{-c_1(T)},
   \qquad
   e^{-c_1(T)}\td(T)=\prod_{a=1}^d\frac{u_a}{e^{u_a}-1}.
\]
The degree-\(k\) part of \(\ch(\xi^\vee)\) is \((-1)^k\) times the degree-
\(k\) part of \(\ch(\xi)\), and the last displayed product is obtained from
\(\td(T)\) by changing \(u_a\) to \(-u_a\).  Hence the integrand for
\(\xi^\vee\otimes\omega_{M,\G}\) is the degree-dual of the integrand for
\(\xi\).  Taking the degree-\(d\) part introduces the factor \((-1)^d\),
which proves the claim.
\end{proof}

We next record several numerical consequences.

\begin{corollary}[Hyperplane degrees of the Todd class]
\label{cor:Todd-alpha-degrees}
Write
\[
   \td(T_{M,\G})=\sum_{k=0}^d\td_k(T_{M,\G}).
\]
Then
\[
   \sum_{k=0}^d
   \frac{m^{d-k}}{(d-k)!}
   \int_{M,\G}\alpha^{d-k}\td_k(T_{M,\G})
   =
   \binom{m+d}{d}.
\]
Equivalently,
\[
   \int_{M,\G}\alpha^{d-k}\td_k(T_{M,\G})
   =
   (d-k)!\,[m^{d-k}]\binom{m+d}{d}.
\]
\end{corollary}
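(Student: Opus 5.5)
Apply the formal HRR identity (Proposition~\ref{prop:formal-HRR}) together with the identification \(\Td_{M,\G}=\td(T_{M,\G})\) from Theorem~\ref{thm:tangent-realization} to the line bundles \(\xi=[\OO(m\alpha)]\) for integers \(m\ge 0\). This gives
\[
   \chi_{M,\G}(\OO(m\alpha))
   =
   \int_{M,\G}e^{m\alpha}\td(T_{M,\G})
   =
   \sum_{k=0}^{d}\frac{m^{d-k}}{(d-k)!}\int_{M,\G}\alpha^{d-k}\td_k(T_{M,\G}),
\]
since only the degree-\(d\) part of the product contributes. The left-hand identity of the corollary therefore reduces to \(\chi_{M,\G}(\OO(m\alpha))=\binom{m+d}{d}\).

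To compute the Euler characteristic, use Definition~\ref{def:chi-MG}. It gives \(\chi_{M,\G}(\OO(m\alpha))=\chi_{M,\Gmax}(\rho_{\Gmax,\G}^*\OO(m\alpha))\). The class \(\alpha\) pulls back to the class \(\alpha\) of \((M,\Gmax)\): both are \(\sum_{F\ni i}x_F\), and every maximal-building-set flat \(H\) containing \(i\) has each of its \(\G\)-factors in exactly the right position, so Lemma~\ref{lem:pullback-boundary-divisors} applied along a one-step chain gives \(\rho^*\alpha=\alpha\). This step was already used implicitly in the proof of Proposition~\ref{prop:relative-Aluffi}. We are thus reduced to the maximal building set, where \(\chi_{M,\Gmax}(\OO(m\alpha))=\binom{m+d}{d}\). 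This is the matroidal analogue of \(\chi(\PP^d,\OO(m))\), and in the realizable case \(\alpha\) is pulled back from \(\PP(L)\simeq\PP^d\) along a birational map with \(R\rho_*\OO=\OO\). For arbitrary \(M\) I would cite the computation of \(\chi\) of \(\OO(m\alpha)\) on the maximal Chow/K-ring from \cite{LLPP} or \cite{ChengTangent}; there \(\chi(\OO(m\alpha))\) is \(\binom{m+d}{d}\), independent of \(M\) (valuativity plus the Boolean/uniform case).

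An alternative that avoids citations is as follows. Both sides are polynomials in \(m\) of degree \(\le d\). Their top coefficient is \(\int\alpha^d/d!=1/d!\), since \(\deg\alpha^d=1\). One then checks the remaining coefficients via the Boolean case, using valuativity of \(M\mapsto\chi(\OO(m\alpha))\). I expect this verification of \(\chi_{M,\Gmax}(\OO(m\alpha))=\binom{m+d}{d}\) for non-realizable \(M\) to be the main obstacle, and I would handle it by citation.

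Finally, the "equivalently" statement follows by comparing coefficients of \(m^{d-k}\) in the polynomial identity. The identity holds for all \(m\ge 0\), hence identically as polynomials.
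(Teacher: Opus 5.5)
Your proposal is correct and is essentially the paper's own proof. You apply formal HRR with \(\Td_{M,\G}=\td(T_{M,\G})\) to \(\OO(m\alpha)\), use \(\rho_{\Gmax,\G}^*\alpha=\alpha\) to reduce to the maximal building set, cite \(\chi_{M,\Gmax}(\OO(m\alpha))=\binom{m+d}{d}\) from \cite{ChengTangent}, and compare coefficients of \(m\); your one-step justification of \(\rho^*\alpha=\alpha\) (each \(i\in F\) lies in exactly one \(\B\)-factor of \(F\), so \(x_F\) appears once) fills in a detail the paper leaves implicit.
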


\begin{proof}
By definition of \(\chi_{M,\G}\), and since
\[
   \rho_{\Gmax,\G}^*\alpha=\alpha,
\]
we have
\[
   \chi_{M,\G}(\OO(m\alpha))
   =
   \chi_{M,\Gmax}(\OO(m\alpha)).
\]
For the maximal building set, the tangent-bundle calculation gives
\[
   \chi_{M,\Gmax}(\OO(m\alpha))=\binom{m+d}{d};
\]
see \cite{ChengTangent}.  On the other hand, formal HRR and
Theorem~\ref{thm:tangent-realization} give
\[
   \chi_{M,\G}(\OO(m\alpha))
   =
   \int_{M,\G}e^{m\alpha}\td(T_{M,\G}).
\]
Expanding \(e^{m\alpha}\) and comparing coefficients of \(m\) proves the
claim.
\end{proof}

\begin{lemma}
\label{lem:xF-power-alpha}
Let \(S=\{F_1,\ldots,F_\ell\}\subseteq\G\setminus\{E\}\), and set
\[
   F:=F_1\vee\cdots\vee F_\ell.
\]
If \(m<\rk(F)\), then
\[
   x_{F_1}\cdots x_{F_\ell}\alpha^{d-m}=0.
\]
Moreover, if \(F\in\G\setminus\{E\}\) has rank \(s:=\rk(F)\), then
\[
   \int_{M,\G}x_F^s\alpha^{d-s}
   =
   (-1)^{s-1}.
\]
\end{lemma}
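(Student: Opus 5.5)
The plan is to handle the vanishing statement first and then reduce the degree computation to a one-dimensional pushforward computation on a star.

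\emph{Vanishing.} By the Stanley--Reisner relations, we may assume \(S\) is \(\G\)-nested; otherwise the monomial already vanishes. Then \(x_{F_1}\cdots x_{F_\ell}\) is the class of the orbit closure \(V(\sigma_S)\). Pick a maximal element of the chain structure to localize: the flat \(F=F_1\vee\cdots\vee F_\ell\) is not in \(\G\) when the \(F_i\) are pairwise incomparable, and in general we choose an element \(a\in E\) outside \(F\). We represent \(\alpha=\sum_{a\in H}x_H\). Each such \(H\) either contains \(F\) or is incomparable to some \(F_i\) in a non-nested way.

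More cleanly, I would argue via the star product. Restricted to \(V(\sigma_S)\), the class \(\alpha\) becomes the pullback of the hyperplane class from the factor corresponding to \(M/F\) (or from the top factor). Concretely, iterate Lemma~\ref{lem:star-pullback-divisors} for \(\iota^*\alpha=\alpha^R\). The right factor \((M/F,\G/F)\) has dimension \(r-\rk(F)-1=d-\rk(F)\). Hence \(\alpha^{d-m}\) restricted to \(V(\sigma_S)\) is a power of \(\alpha^R\) exceeding that dimension whenever \(m<\rk(F)\), and therefore vanishes. To make this rigorous for a general nested \(S\), I would order \(S\) by inclusion and apply the one-flat restriction repeatedly. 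At each stage, \(\alpha\) restricts to the hyperplane class of the contraction by the current maximal flat, using the representative with \(a\notin F\). The main technical point is checking that the contraction dimension is exactly \(d-\rk(F)\) when several incomparable maximal elements occur. In that case the join \(F\notin\G\), but the right factor is still the contraction by the join, since the star of a nested cone is a product over the \(\G\)-factors.

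\emph{Degree of \(x_F^s\alpha^{d-s}\).} Write \(x_F^s=x_F\cdot x_F^{s-1}\), so that the integral equals \(\int_{D_F}(x_F|_{D_F})^{s-1}\alpha^{d-s}\). The divisor \(D_F\) is the star of the ray \(v_F\), namely \(X_{M|F,\G|F}\times X_{M/F,\G/F}\), with the first factor of dimension \(\rk(F)-1=s-1\) (as a product over factors, but \(F\in\G\) is its own unique factor). By Lemma~\ref{lem:star-pullback-divisors} with \(\ell=1\), we have \(\alpha|_{D_F}=\alpha^R\), and relation \eqref{eq:alpha-star-relation} gives \(n=x_F|_{D_F}=\alpha^R-S_F^R-\alpha_L\). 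Expanding \((\alpha^R-S_F^R-\alpha_L)^{s-1}(\alpha^R)^{d-s}\), degree reasons on the product force the left factor to carry exactly degree \(s-1\). So only the term \((-\alpha_L)^{s-1}(\alpha^R)^{d-s}\) survives, giving \((-1)^{s-1}\int\alpha_L^{s-1}\cdot\int(\alpha^R)^{d-s}\). Each of these integrals equals \(1\), since \(\deg\alpha^{\text{top}}=1\) for any loopless matroid and building set. This follows from Corollary~\ref{cor:Todd-alpha-degrees} (leading coefficient \(1/d!\) of \(\binom{m+d}{d}\)), or directly from the known degree in the Feichtner--Yuzvinsky ring. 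The sign \((-1)^{s-1}\) results.

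The expected main obstacle is the first part in the non-chain nested case: correctly identifying the right factor and its dimension. A fallback is to induct on \(\ell\) using \(x_{F_1}\) restriction and the product formula, since \(\alpha\) always restricts into the contraction factor.
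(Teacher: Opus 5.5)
Your proposal is correct and is essentially the paper's own proof. You reduce to a \(\G\)-nested \(S\) and write the monomial as a pushforward from \(V(\sigma_S)\), on which \(\alpha\) restricts to the hyperplane class of the contraction factor \(M/F\), of dimension \(d-\rk(F)\). For the self-intersection you restrict to \(D_F\), use \(x_F|_{D_F}=\alpha^R-\alpha^L-S_F^R\), and observe that only \((-\alpha^L)^{s-1}(\alpha^R)^{d-s}\) survives.

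Two small remarks. First, Lemma~\ref{lem:star-pullback-divisors} is stated for one-step enlargements, where necessarily \(\ell\ge 2\). For the ray \(v_F\) with \(F\in\G\) you are therefore reusing its proof rather than its statement, as the paper does tacitly. Second, your discarded first sketch is false as written: an \(H\ni a\) can be incomparable to the \(F_i\) yet nested with \(S\), and such \(H\) contribute to the hyperplane class of the contraction factor. The star-product argument you settle on does not rely on that sketch.
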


\begin{proof}
We may assume that \(S=\{F_1,\ldots,F_\ell\}\subseteq\G\setminus\{E\}\) be a
\(\G\)-nested set. Let
\[
   \sigma_S:=\cone(v_{F_1},\ldots,v_{F_\ell}),
   \qquad
   \iota_S:V(\sigma_S)\hookrightarrow X_{M,\G}
\]
be the orbit closure corresponding to the nested set \(S\).  The standard
toric orbit-closure formula gives
\[
   (\iota_S)_*[1]=x_{F_1}\cdots x_{F_\ell}.
\]
Hence, by the projection formula,
\[
   x_{F_1}\cdots x_{F_\ell}\alpha^{d-m}
   =
   (\iota_S)_*\iota_S^*(\alpha^{d-m}).
\]
The star of \(\sigma_S\) is the product of the nested-set fans associated to
the intervals cut out by \(S\).  In particular, the factor above
\(F=F_1\vee\cdots\vee F_\ell\) is the contraction factor \(M/F\), and
\(\iota_S^*\alpha\) is pulled back from the hyperplane class on this factor.
Since \(M/F\) has rank \(r-\rk(F)\), this factor has top Chow degree
\[
   r-\rk(F)-1=d-\rk(F).
\]
If \(m<\rk(F)\), then \(d-m>d-\rk(F)\), so
\[
   \iota_S^*(\alpha^{d-m})=0.
\]
This proves the vanishing.

It remains to prove the self-intersection formula.  Let
\[
   \iota_F:D_F\hookrightarrow X_{M,\G}
\]
be the boundary divisor corresponding to the ray \(v_F\).  The star of
\(v_F\) is the restriction--contraction product, so
\[
   \A(D_F)
   \simeq
   \A(M|F,\G|F)\otimes \A(M/F,\G/F).
\]
Let \(\alpha^L\) and \(\alpha^R\) denote the hyperplane classes on the
restriction and contraction factors.  The divisor-restriction rule gives
\[
   \iota_F^*\alpha=\alpha^R.
\]
Moreover,
\[
   \iota_F^*x_F+\alpha^L+S_F^R=\alpha^R,
\]
where \(S_F^R\) belongs to the contraction factor.  Thus
\[
   \iota_F^*x_F=\alpha^R-\alpha^L-S_F^R.
\]
By the projection formula,
\[
   \int_{M,\G}x_F^s\alpha^{d-s}
   =
   \int_{D_F}(\iota_F^*x_F)^{s-1}(\alpha^R)^{d-s}.
\]
After multiplying by \((\alpha^R)^{d-s}\), every term involving either
\(\alpha^R\) or a class from \(S_F^R\) has degree greater than \(d-s\) on
the contraction factor, and therefore vanishes.  Hence
\[
   (\iota_F^*x_F)^{s-1}(\alpha^R)^{d-s}
   =
   (-\alpha^L)^{s-1}(\alpha^R)^{d-s} = (-1)^{s-1}.
\]
\end{proof}

\begin{lemma}[Truncated building set]
\label{lem:truncated-building-set}
Let \(0\le k\le d\), and let
\[
   M^{(k)}:=\operatorname{Tr}^{d-k}M
\]
be the \((d-k)\)-fold truncation of \(M\), so \(M^{(k)}\) has rank \(k+1\).
Set
\[
   \G^{(k)}
   :=
   \{F\in\G\mid \rk_M(F)\le k\}\cup\{E\}.
\]
Then \(\G^{(k)}\) is a building set in the lattice of flats of \(M^{(k)}\).
\end{lemma}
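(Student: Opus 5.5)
The plan is to first describe the lattice of flats of the truncation concretely, and then check the building-set axiom of Section~\ref{subsec:buildingset} interval by interval. The flats of \(M^{(k)}=\operatorname{Tr}^{d-k}M\) are the flats of \(M\) of rank at most \(k\), together with \(E\). Rank is preserved except at the top, where \(E\) now has rank \(k+1\). Thus
\[
\LL(M^{(k)})=\{G\in\LL(M)\mid \rk_M(G)\le k\}\cup\{E\}.
\]
The order agrees with inclusion, and for every proper flat \(G\) the lower interval \([\hat 0,G]\) in \(\LL(M^{(k)})\) equals the interval \([\hat 0,G]\) in \(\LL(M)\). Consequently \(\G^{(k)}\subseteq\LL(M^{(k)})\setminus\{\emptyset\}\), and it contains \(E\) by construction.

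Next I verify the factorization condition for a nonempty flat \(G\) of \(M^{(k)}\).

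\emph{Case \(G\neq E\).} Here \(\rk_M(G)\le k\). The elements of \(\G^{(k)}\) below \(G\) are exactly the elements of \(\G\) below \(G\), because anything contained in \(G\) has rank at most \(k\). Hence \(\Fac_{\G^{(k)}}(G)=\Fac_\G(G)\). Since the interval \([\hat 0,G]\) is the same in both lattices, the isomorphism \([\hat 0,G]\simeq\prod_j[\hat 0,G_j]\) given by the building-set property of \(\G\) transfers verbatim.

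\emph{Case \(G=E\).} Since \(E\in\G^{(k)}\), the unique maximal element of \(\G^{(k)}\) below \(E\) is \(E\) itself. The required decomposition is therefore the trivial one-factor decomposition \([\hat 0,E]\simeq[\hat 0,E]\), which always holds.

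The only point needing care is the first step: one must make sure that the truncation does not create new flats and does not change joins among flats of rank at most \(k\). Joins may change when the \(M\)-join has rank greater than \(k\), since that join becomes \(E\). This does not affect the argument. The building-set axiom is stated purely in terms of lower intervals and their product decompositions, and the case analysis above shows that for every \(G\neq E\) these intervals and decompositions are literally unchanged. So I expect this step to be routine once the description of \(\LL(M^{(k)})\) is recorded.
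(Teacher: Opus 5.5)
Your proposal is correct and follows essentially the same route as the paper's proof: the proper flats of \(M^{(k)}\) are the flats of \(M\) of rank at most \(k\), with unchanged lower intervals, so \(\Fac_{\G^{(k)}}(G)=\Fac_\G(G)\) and the product decomposition transfers for \(G\neq E\). The case \(G=E\) is trivial because \(E\in\G^{(k)}\). Your extra remark that joins of rank greater than \(k\) collapse to \(E\) without affecting the lower-interval axiom is a nice clarification the paper leaves implicit.
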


\begin{proof}
The non-maximal flats of \(M^{(k)}\) are exactly the flats of \(M\) of rank at most \(k\) (when \(k \geq 1\)), and the interval below such a flat is unchanged by truncation.
Thus, if \(X\neq E\) is a flat of \(M^{(k)}\), the
\(\G^{(k)}\)-factors of \(X\) are the same as the \(\G\)-factors of \(X\),
and the required product decomposition follows from the fact that \(\G\) is
a building set. For the top flat \(E\) of \(M^{(k)}\), the unique maximal element of \(\G^{(k)}\) contained in \(E\) is \(E\) itself. Therefore, \(\G^{(k)}\) is a building set of \(M^{(k)}\).
\end{proof}

\begin{lemma}[\(\alpha\)-truncation in degree]
\label{lem:alpha-truncation}
Let \(0\le k\le d\), and use the notation of
Lemma~\ref{lem:truncated-building-set}.  If \(P\) is a homogeneous
degree-\(k\) polynomial in \(\alpha\) and in the variables \(x_F\) with
\(\rk_M(F)\le k\), then
\[
   \int_{M,\G}P\,\alpha^{d-k}
   =
   \int_{M^{(k)},\G^{(k)}}P.
\]
\end{lemma}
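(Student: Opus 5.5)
Since both sides are linear in \(P\), it suffices to treat a single monomial \(P=x_{F_1}^{a_1}\cdots x_{F_\ell}^{a_\ell}\alpha^{b}\) with \(\sum a_j+b=k\) and every \(\rk_M(F_j)\le k\). We may assume the distinct \(F_j\) form a \(\G\)-nested set; otherwise both sides vanish. The point is that nestedness and the Stanley--Reisner relation are the same in \(\G\) and \(\G^{(k)}\) when all flats involved have rank \(\le k\). Indeed, the joins that occur are either flats of rank \(\le k\), which lie in both lattices with the same membership in the building set, or they have rank \(>k\). In the latter case the join is \(E\) in \(M^{(k)}\), which lies in \(\G^{(k)}\), so the set is non-nested there. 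I would check that the vanishing part of Lemma~\ref{lem:xF-power-alpha} covers this case on the \(M\) side: the join has rank \(>k\ge\) the number of \(x\)-factors, so multiplying by \(\alpha^{d-k+b}\) kills the product.

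The cleanest route is an induction on the degree in the \(x\)'s, reducing the monomial to a pure power of \(\alpha\) via restriction to an orbit closure. Write the monomial as \(x_{F_1}\cdots x_{F_\ell}\cdot Q\) with \(Q\) of degree \(k-\ell\). By the orbit-closure formula \((\iota_S)_*1=x_{F_1}\cdots x_{F_\ell}\) used in the proof of Lemma~\ref{lem:xF-power-alpha}, the left side equals \(\int_{V(\sigma_S)}\iota_S^*(Q\,\alpha^{d-k})\). The star of \(\sigma_S\) is the product of nested-set fans of the intervals cut out by \(S\), with top factor \(M/F\) where \(F=\bigvee F_j\) has rank \(\le k\). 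The same star computation applies in \((M^{(k)},\G^{(k)})\). The lower interval factors are literally identical, since intervals below flats of rank \(\le k\) are unchanged by truncation. The top factor is \(M^{(k)}/F=\operatorname{Tr}^{d-k}(M/F)\), and its building set is the truncated building set \((\G/F)^{(k-\rk F)}\).

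Next I would restrict \(Q\) using Lemma~\ref{lem:star-pullback-divisors} and its analogue for orbit closures of nested sets. Each \(x_G\) pulls back to a class on one of the factors. If \(G\) lies in a lower interval, the class is the same on both sides. If \(G\) lies above \(F\), it has rank \(\le k\) and becomes a right-factor variable. If \(G=F_j\), it becomes a normal class, which can be rewritten through \eqref{eq:alpha-star-relation} as a combination of hyperplane classes and right-factor variables. The class \(\alpha\) restricts to \(\alpha^R\). The integrand \(\iota_S^*(Q)\,(\alpha^R)^{d-k}\) then splits as a sum of products of lower-factor classes and right-factor classes. On the right factor, the extra power \((\alpha^R)^{d-k}\) is present, and the remaining polynomial involves only variables of rank \(\le k-\rk F\) in \(M/F\). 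Induction on rank (equivalently on \(\ell\)) then replaces the \(M/F\) integral by the \(\operatorname{Tr}^{d-k}(M/F)\) integral, while the lower factors match verbatim. This reduces everything to the base case \(P=\alpha^k\), where \(\int_{M,\G}\alpha^d=1=\int_{M^{(k)},\G^{(k)}}\alpha^k\). Both equal \(1\), since \(\alpha^{\text{top}}\) has degree one for any building set: the degree of \(\alpha^d\) is computed in \(\A(M,\Gmax)\) via \(\rho_{\Gmax,\G}^*\alpha=\alpha\).

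The main obstacle is the bookkeeping in the restriction step. First, one must confirm that the normal classes \(\iota_S^*x_{F_j}\) are expressed identically in both settings. Second, one must check that the right-factor polynomial still satisfies the rank hypothesis needed for the induction, so that the truncation degrees match: \(d-\rk F\) versus \(k-\rk F\), with the exponent of \(\alpha^R\) equal to exactly \(d-k\). I would try first to handle only \(\ell=1\), restricting to a single divisor \(D_F\) as in the second half of Lemma~\ref{lem:xF-power-alpha}. Then I would peel off one \(x\) at a time, so that only Lemma~\ref{lem:star-pullback-divisors} and \eqref{eq:alpha-star-relation} are needed.
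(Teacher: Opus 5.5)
Your route is genuinely different from the paper's. The paper never localizes. It notes that setting \(x_H=0\) for \(\rk_M(H)>k\) defines a surjection \(\pi_k:\A(M,\G)\to\A(M^{(k)},\G^{(k)})\). It then checks that \(P\mapsto\int_{M,\G}P\,\alpha^{d-k}\) kills the extra relations created by truncation, namely the new Stanley--Reisner monomials whose \(M\)-join has rank \(>k\); this is the same vanishing check as in your first paragraph. So the functional descends to the one-dimensional space \(\A^k(M^{(k)},\G^{(k)})\), and its value \(1\) on \(\alpha^k\) fixes it. That argument is short and needs no bookkeeping of restrictions.

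Your induction through \(D_F\), whose star is (restriction)\(\times\)(contraction), is more explicit. It uses only \(\int\alpha^{\mathrm{top}}=1\) and the product structure of stars. Your identifications are correct: \(M^{(k)}/F=\operatorname{Tr}^{d-k}(M/F)\), \(\G^{(k)}/F=(\G/F)^{(k-\rk F)}\), the unchanged lower factors, and the exponent count \(d'-k'=d-k\).

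However, the step you flag as the main obstacle fails as stated. The right-factor polynomial does \emph{not} involve only variables of \(M/F\)-rank \(\le k':=k-\rk F\), and the normal classes are not expressed identically on the two sides. This happens already for \(\ell=1\):
\begin{itemize}
\item On the \(M\) side, \(\iota_F^*x_F=\alpha^R-\alpha^L-S_F^R\), where \(S_F^R\) sums over \emph{all} \(H\in\G\) with \(F\subsetneq H\subsetneq E\), including \(\rk_M H>k\). On the truncated side only \(\rk_M H\le k\) occur.
\item Your list of restrictions misses variables \(x_G\) of \(Q\) with \(G\) incomparable to \(F\) and \(F\vee G\notin\G\); in that case \(F\vee G=F\sqcup G\). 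Such \(x_G\) restrict to \(x^R_{(F\vee G)/F}\), which has \(M/F\)-rank \(\rk G\). This rank can exceed \(k'\), and exactly then \(x_Fx_G=0\) on the truncated side, because the truncated join is \(E\).
\end{itemize}
So the induction hypothesis cannot be applied to the \(M\)-side right factor as it stands.

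The fix is to apply the vanishing half of Lemma~\ref{lem:xF-power-alpha} again, on \((M/F,\G/F)\), at every inductive step.
\begin{itemize}
\item Write \(\iota_F^*Q=\sum_a L_a\boxtimes R_a\). Keep only the terms with \(\deg L_a=\rk F-1\) and \(\deg R_a=k'\); other bidegrees integrate to zero on both sides.
\item Let \(d'=d-\rk F\). A right monomial of degree \(k'\) containing a variable of \(M/F\)-rank \(>k'\) has \(x\)-degree \(j\le k'\), which is less than the rank of the join of its support. After multiplying by \((\alpha^R)^{d-k}=(\alpha^R)^{d'-k'}\), its \(\alpha^R\)-exponent is \(d'-j\), so it vanishes by the lemma with \(m=j\).
\item The truncated restrictions are exactly the \(M\)-side restrictions with the high-rank right variables set to zero. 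So once those monomials are discarded, the two right-factor polynomials agree, and your induction on rank closes.
\end{itemize}
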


\begin{proof}
Consider the polynomial ring on the variables \(x_F\) with
\(F\in\G\setminus\{E\}\).  Sending
\[
   x_F\longmapsto
   \begin{cases}
      x_F, & \rk_M(F)\le k,\\
      0, & \rk_M(F)>k
   \end{cases}
\]
and sending \(\alpha\) to \(\alpha\) is compatible with the linear relations
in the Feichtner--Yuzvinsky presentations.  Therefore it gives a natural
surjective homomorphism
\[
   \pi_k:\A(M,\G)\longrightarrow \A(M^{(k)},\G^{(k)}).
\]

We claim that the degree functional
\[
   L_k(P):=\int_{M,\G}P\,\alpha^{d-k}
\]
on homogeneous degree-\(k\) polynomials in the variables of rank at most
\(k\) factors through \(\pi_k\).  In other words, if such a polynomial
represents zero in \(\A(M^{(k)},\G^{(k)})\), then its product with
\(\alpha^{d-k}\) has degree zero in \(\A(M,\G)\).

It is enough to check the additional Stanley--Reisner relations that appear
after truncation.  Let
\[
   S_1,\ldots,S_m\in \G^{(k)}\setminus\{E\}
\]
be pairwise incomparable, and suppose that
\[
   S_1\vee_{M^{(k)}}\cdots\vee_{M^{(k)}}S_m
   \in \G^{(k)}.
\]
If the join
\[
   J:=S_1\vee_M\cdots\vee_M S_m
\]
has \(\rk_M(J) > k\), Lemma~\ref{lem:xF-power-alpha} implies \(x_{S_1}\cdots x_{S_m}\,\alpha^{d-k} = 0\). When \(\rk_M(J) \leq k\), the join is the same in \(M\) and in \(M^{(k)}\).  In this case \(J\in\G\), so
\[
   x_{S_1}\cdots x_{S_m}=0
   \qquad\text{already in }\A(M,\G).
\]
Thus the relation creates nothing new, and \(L_k\) factors through \(\pi_k\).

Therefore, \(L_k\) descends to a linear functional on \(\A^k(M^{(k)},\G^{(k)})\). The top degree of this Chow ring is one-dimensional, so the descended
functional is a scalar multiple of the degree map. The scalar is determined by
\[
   L_k(\alpha^k)
   =
   \int_{M,\G}\alpha^d
   =
   1.
\] Consequently, the descended functional is the usual degree map.
\end{proof}

\begin{proposition}[Tangent class and truncation]
\label{prop:tangent-truncation}
With the notation above,
\[
   \int_{M,\G}c_k(T_{M,\G})\alpha^{d-k}
   =
   \int_{M^{(k)},\G^{(k)}}
      \left[
         (1+\alpha)^{d-k}
         c(T_{M^{(k)},\G^{(k)}})
      \right]_k .
\]
\end{proposition}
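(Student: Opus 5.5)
The plan is to compute \(c_k(T_{M,\G})\) from the product formula \eqref{eq:intrinsic-cT}, discard everything that dies against \(\alpha^{d-k}\), and transport what remains to the truncation with Lemma~\ref{lem:alpha-truncation}. The main tool is the vanishing in Lemma~\ref{lem:xF-power-alpha}.

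\textbf{Step 1: drop high-rank factors.} Write
\[
c(T_{M,\G})=(1+\alpha)^r\prod_{F\in\G\setminus\{E\}}(1+x_F)\Bigl(\frac{1+\theta_F^\G}{1+\theta_F^\G+x_F}\Bigr)^{\rk(F)}.
\]
Any monomial in the degree-\(k\) part that involves some \(x_F\) with \(\rk(F)>k\) can be written as \(x_{F_1}\cdots x_{F_m}\cdot(\text{stuff})\). The join of the \(F_j\) then has rank \(>k\). Using Lemma~\ref{lem:xF-power-alpha} with \(m\) replaced by the total degree \(\le k\), such a monomial vanishes after multiplication by \(\alpha^{d-k}\).

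This needs one point checked. The vanishing in Lemma~\ref{lem:xF-power-alpha} is stated for \(x_{F_1}\cdots x_{F_\ell}\alpha^{d-m}\). Here the extra factors are monomials in \(\alpha\) and other \(x_G\), so I will use the orbit-closure version of its proof: \(x_{F_1}\cdots x_{F_\ell}\) is supported on \(V(\sigma_S)\). On \(V(\sigma_S)\), \(\alpha^{d-k}\) is pulled back from the contraction factor above the join, which has dimension \(<d-k\). Hence the product is already zero.

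Modulo these terms, every factor with \(\rk(F)>k\) can be replaced by \(1\). In the surviving factors, the cutting class \(\theta_F^\G\) contains terms \(x_H\) with \(\rk(H)>k\), and these also vanish under the same argument. So \(\theta_F^\G\) may be replaced by \(\alpha-\sum_{F\subseteq H,\ \rk(H)\le k,\ H\in\G}x_H\). This is exactly \(\theta_F^{\G^{(k)}}\) computed in \(M^{(k)}\), where the hyperplane class is still \(\alpha\).

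\textbf{Step 2: match with the truncated tangent class.} After Step 1,
\[
\int_{M,\G}c_k(T_{M,\G})\alpha^{d-k}=\int_{M,\G}\Bigl[(1+\alpha)^{r}\prod_{F\in\G^{(k)}\setminus\{E\}}(1+x_F)\Bigl(\tfrac{1+\theta_F^{\G^{(k)}}}{1+\theta_F^{\G^{(k)}}+x_F}\Bigr)^{\rk(F)}\Bigr]_k\alpha^{d-k}.
\]
Truncation preserves the rank of these flats, since \(\rk_M(F)\le k\). Also \(M^{(k)}\) has rank \(k+1=r-(d-k)\). The bracket therefore equals \(\bigl[(1+\alpha)^{d-k}c(T_{M^{(k)},\G^{(k)}})\bigr]_k\) as a polynomial in \(\alpha\) and the \(x_F\) with \(\rk(F)\le k\).

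Applying Lemma~\ref{lem:alpha-truncation} to this homogeneous degree-\(k\) polynomial converts \(\int_{M,\G}(\cdot)\,\alpha^{d-k}\) into \(\int_{M^{(k)},\G^{(k)}}(\cdot)\), which gives the claim. The edge case \(k=0\) is trivial, since both sides equal \(1\).

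\textbf{Main obstacle.} I expect the main difficulty to be the rigorous version of Step 1. Lemma~\ref{lem:xF-power-alpha} must be applied to mixed monomials rather than pure products against \(\alpha^{d-m}\). I will handle this through the support argument above: the product of the high-rank \(x\)'s is pushed forward from an orbit closure on which \(\alpha^{d-k}\) already vanishes. Because Chern-class manipulations are formal power-series identities, this vanishing can be applied termwise in the polynomial ring before passing to \(\A(M,\G)\). Lemma~\ref{lem:alpha-truncation} is formulated exactly for such polynomials.
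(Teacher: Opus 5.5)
Your proposal is correct and follows the paper's proof essentially step for step. You discard every term containing some \(x_F\) with \(\rk(F)>k\) via Lemma~\ref{lem:xF-power-alpha}, observe that the surviving cutting classes become \(\theta_F^{\G^{(k)}}\), split \((1+\alpha)^r=(1+\alpha)^{k+1}(1+\alpha)^{d-k}\), and conclude with Lemma~\ref{lem:alpha-truncation}. The extra point you flag about mixed monomials is automatic: Lemma~\ref{lem:xF-power-alpha}, already with \(S=\{F\}\) and \(m=k\), gives \(x_F\alpha^{d-k}=0\) as a class, so multiplying by the remaining factors of a monomial cannot make it nonzero.
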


\begin{proof}
Use the product formula
\[
   c(T_{M,\G})
   =
   (1+\alpha)^r
   \prod_{F\in\G\setminus\{E\}}
   (1+x_F)
   \left(
      \frac{1+\theta_F^\G}{1+\theta_F^\G+x_F}
   \right)^{\rk(F)}.
\]
Any positive-degree term coming from a flat \(F\) with \(\rk(F)>k\) contains
a factor \(x_F\).  Such a term vanishes after multiplication by
\(\alpha^{d-k}\), by Lemma~\ref{lem:xF-power-alpha}.

Thus only flats of rank at most \(k\) contribute.  For such flats, the
projection \(\pi_k\) sends
\[
   \theta_F^\G
   =
   \alpha-
   \sum_{\substack{H\in\G\\F\subseteq H\subsetneq E}}x_H \qquad \text{to} \qquad
   \alpha-
   \sum_{\substack{H\in\G\\F\subseteq H\subsetneq E\\ \rk(H)\le k}}x_H
   =
   \theta_F^{\G^{(k)}}.
\]
Moreover,
\[
   (1+\alpha)^r=(1+\alpha)^{k+1}(1+\alpha)^{d-k},
\]
and \((1+\alpha)^{k+1}\) is the Euler factor for
\(T_{M^{(k)},\G^{(k)}}\).  Applying Lemma~\ref{lem:alpha-truncation} gives
the stated identity.
\end{proof}

\begin{corollary}[Chern numbers against \(\alpha\)]
\label{cor:Chern-alpha-positive}
For every building set \(\G\) and every \(0\le k\le d\),
\[
   \int_{M,\G}c_k(T_{M,\G})\alpha^{d-k}
   \ge
   \binom{d+1}{k}.
\]
Equivalently, the \(k\)-th Chern number against \(\alpha^{d-k}\) is at least
the corresponding value for \(\PP^d\).
\end{corollary}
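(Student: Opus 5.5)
The plan is to reduce everything to top Chern numbers through Proposition~\ref{prop:tangent-truncation}, and to bound top Chern numbers with the Chow-polynomial identity of Theorem~\ref{thm:tangent-realization}. Write
\[
   a_k(M,\G):=\int_{M,\G}c_k(T_{M,\G})\alpha^{d-k}.
\]
I will prove $a_k(M,\G)\ge\binom{d+1}{k}$ by induction on $k$, simultaneously for all loopless matroids and all building sets. The base case $k=0$ is the normalization $\int_{M,\G}\alpha^d=1=\binom{d+1}{0}$.

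\emph{Step 1: top Chern numbers.} Take $k=d$. Specialize $\Phi_y$ at $y=-1$. For a class with Chern roots $u_a$, the factor becomes
\[
   Q_{-1}(u)=\frac{u(1-e^{-u})}{1-e^{-u}}=u,
\]
so $\Phi_{-1}(T_{M,\G})=c_d(T_{M,\G})$, using that $T_{M,\G}$ has rank $d$. The substitution $y=-t$ with $t=1$ in Theorem~\ref{thm:tangent-realization} then gives
\[
   \int_{M,\G}c_d(T_{M,\G})=H_M^\G(1)=\sum_{p=0}^d\dim_\Q\A^p(M,\G).
\]
Each $\A^p(M,\G)$ with $0\le p\le d$ is nonzero, because $\alpha^p\ne0$: otherwise $\int\alpha^d=1$ would fail. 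Hence $a_d(M,\G)\ge d+1=\binom{d+1}{d}$.

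\emph{Step 2: the inductive step.} For general $k$, Proposition~\ref{prop:tangent-truncation} gives
\[
   a_k(M,\G)=\int_{M^{(k)},\G^{(k)}}\bigl[(1+\alpha)^{d-k}c(T_{M^{(k)},\G^{(k)}})\bigr]_k.
\]
Expanding $(1+\alpha)^{d-k}$ turns this into
\[
   a_k(M,\G)=\sum_{j=0}^{k}\binom{d-k}{j}\,a_{k-j}\bigl(M^{(k)},\G^{(k)}\bigr).
\]
Here $M^{(k)}$ has rank $k+1$, so its "$d$" equals $k$, and its hyperplane class is the image of $\alpha$ under $\pi_k$. I will estimate the two kinds of terms separately.
\begin{itemize}
   \item The $j=0$ term is a top Chern number of $(M^{(k)},\G^{(k)})$. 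By Step~1 it is at least $k+1=\binom{k+1}{k}$.
   \item For $j\ge1$ the index $k-j$ is smaller than $k$. The induction hypothesis, applied to the pair $(M^{(k)},\G^{(k)})$, gives $a_{k-j}(M^{(k)},\G^{(k)})\ge\binom{k+1}{k-j}$.
\end{itemize}
All coefficients $\binom{d-k}{j}$ are nonnegative, so the lower bounds add up. Vandermonde's identity then gives
\[
   a_k(M,\G)\ge\sum_j\binom{d-k}{j}\binom{k+1}{k-j}=\binom{d+1}{k}.
\]

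\emph{Main obstacle.} The delicate point is checking that the expansion in Step~2 is legitimate. Two things need verifying.
\begin{itemize}
   \item The class $\alpha$ appearing in Proposition~\ref{prop:tangent-truncation} must really be the hyperplane class of $(M^{(k)},\G^{(k)})$. This holds because $\pi_k$ sends $\alpha$ to $\alpha$ and respects the linear relations defining it.
   \item The induction hypothesis must apply to an arbitrary loopless pair, which $(M^{(k)},\G^{(k)})$ is by Lemma~\ref{lem:truncated-building-set}.
\end{itemize}
Also needed is the nonvanishing of every $\A^p$ in Step~1. I would argue it directly from $\int\alpha^d=1$, which is part of Lemma~\ref{lem:alpha-truncation}'s normalization.
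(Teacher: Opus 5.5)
Your proposal is correct and follows essentially the same route as the paper. You expand Proposition~\ref{prop:tangent-truncation}, bound the top Chern number of $(M^{(k)},\G^{(k)})$ below by $H_{M^{(k)}}^{\G^{(k)}}(1)\ge k+1$ via the $y=-1$ specialization of Theorem~\ref{thm:tangent-realization}, bound the lower-index terms by induction, and finish with Vandermonde's identity; your index $j$ is simply the paper's $k-j$.
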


\begin{proof}
Set
\[
   P_k(M,\G)
   :=
   \int_{M,\G}c_k(T_{M,\G})\alpha^{d-k}.
\]
By Proposition~\ref{prop:tangent-truncation},
\[
   P_k(M,\G)
   =
   \sum_{j=0}^k
      \binom{d-k}{k-j}
      P_j(M^{(k)},\G^{(k)}).
\]
We prove the claim by induction on \(k\).  The case \(k=0\) is immediate.

Assume the result is known for all smaller indices.  Since \(M^{(k)}\) has
rank \(k+1\), the induction hypothesis gives
\[
   P_j(M^{(k)},\G^{(k)})
   \ge
   \binom{k+1}{j}
   \qquad
   (j<k).
\]
For \(j=k\), Theorem~\ref{thm:tangent-realization} at \(y=-1\) gives
\[
   P_k(M^{(k)},\G^{(k)})
   =
   \int c_k(T_{M^{(k)},\G^{(k)}})
   =
   H_{M^{(k)}}^{\G^{(k)}}(1).
\]
Since \(\int_{M^{(k)},\G^{(k)}}\alpha^k=1\), the Chow ring has nonzero
classes in degrees \(0,\ldots,k\).  Therefore
\[
   H_{M^{(k)}}^{\G^{(k)}}(1)
   =
   \sum_{p=0}^k\dim_\Q\A^p(M^{(k)},\G^{(k)})
   \ge
   k+1
   =
   \binom{k+1}{k}.
\]
Thus
\[
\begin{aligned}
   P_k(M,\G)
   \ge
   \sum_{j=0}^k
      \binom{d-k}{k-j}
      \binom{k+1}{j} = \binom{d+1}{k},
\end{aligned}
\]
by Vandermonde's identity.
\end{proof}
\begin{corollary}[Monotonicity in the building set]
\label{cor:Chern-alpha-monotone}
Let \(\G\subseteq \Hh\) be building sets containing \(E\).  Then, for every
\(0\le k\le d\),
\[
   \int_{M,\Hh}c_k(T_{M,\Hh})\alpha^{d-k}
   \ge
   \int_{M,\G}c_k(T_{M,\G})\alpha^{d-k}.
\]
\end{corollary}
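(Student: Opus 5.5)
Because \(\G\subseteq\Hh\) can be joined by a chain of one-step enlargements, it suffices to treat a single step \(\B^+=\B\cup\{F\}\) with \(\Fac_\B(F)=\{F_1,\ldots,F_\ell\}\); the inequalities then compose along the chain. I will compare \(P_k(M,\B^+)\) and \(P_k(M,\B)\) using the relative Aluffi identity (Proposition~\ref{prop:relative-Aluffi}) and the fact that \(\rho_F^*\alpha=\alpha\). By the projection formula,
\[
   \int_{M,\B^+}\rho_F^*c_k(T_{M,\B})\,\alpha^{d-k}
   =
   \int_{M,\B}c_k(T_{M,\B})\,\alpha^{d-k}.
\]
So the difference \(P_k(M,\B^+)-P_k(M,\B)\) is the degree of \(\alpha^{d-k}\,\rho_F^*c(T_{M,\B})\,(c(R)-1)\) in degree \(k\). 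Here
\[
   c(R)=(1+x_F)\prod_i\frac{1+x_{F_i}^{\B^+}}{1+x_{F_i}^{\B^+}+x_F},
\]
and every term of \(c(R)-1\) is divisible by \(x_F\).

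The first idea would be to push the difference down to \(Z_F\), but the truncation machinery already gives a cleaner route. Proposition~\ref{prop:tangent-truncation} reduces everything to the top Chern numbers of the truncations,
\[
   P_k(M,\G)=\sum_{j\le k}\binom{d-k}{k-j}P_j(M^{(k)},\G^{(k)}),
\]
with nonnegative coefficients. Truncation commutes with enlargement: \(\G^{(k)}\subseteq\Hh^{(k)}\) are building sets of \(M^{(k)}\) by Lemma~\ref{lem:truncated-building-set}. Moreover \(P_j(M^{(k)},\cdot)\) is itself a \(P_j\) for a rank-\((k+1)\) matroid. I therefore proceed by induction on \(k\). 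The terms with \(j<k\) are handled by the induction hypothesis applied to \(M^{(k)}\). The term \(j=k\) is a top Chern number, which by Theorem~\ref{thm:tangent-realization} at \(y=-1\) equals the Hilbert series value
\[
   \sum_p\dim_\Q\A^p(M^{(k)},\cdot).
\]

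It remains to show that \(\sum_p\dim_\Q\A^p\) is monotone under enlargement of the building set. This follows from the one-step Hilbert series formula \cite[Theorem~5.1]{EFMPV}, quoted in the proof of Proposition~\ref{prop:formal-Chow-polynomial}:
\[
   H^{\B^+}_M(t)=H^\B_M(t)+(t+\cdots+t^{\ell-1})H_{M|F}^{\B|F}(t)\,H_{M/F}^{\B/F}(t).
\]
All three Hilbert series have nonnegative coefficients, so \(H^{\B^+}_M(t)-H^\B_M(t)\) has nonnegative coefficients, and in particular \(H^{\B^+}_M(1)\ge H^\B_M(1)\). Chaining the one-step comparisons gives the full inequality.

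I expect the main obstacle to be bookkeeping in the induction. The coefficients \(\binom{d-k}{k-j}\) must be the same for \(\G\) and \(\Hh\), which holds because they depend only on \(d\) and \(k\). The induction hypothesis must also be applicable to \(M^{(k)}\) with both truncated building sets \(\G^{(k)}\subseteq\Hh^{(k)}\). I also need to confirm that truncating twice is compatible, namely that \((M^{(k)})^{(j)}=M^{(j)}\) with \((\G^{(k)})^{(j)}=\G^{(j)}\). In fact the induction can be avoided altogether: Proposition~\ref{prop:tangent-truncation} can be applied at every level \(j\), rather than recursively, to express each \(P_j(M^{(k)},\cdot)\) in terms of \(H(1)\) values of further truncations with nonnegative coefficients, and then monotonicity of each \(H(1)\) finishes the proof directly.
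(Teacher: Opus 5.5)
Your proposal is correct and follows essentially the same route as the paper, which also reruns the induction of Corollary~\ref{cor:Chern-alpha-positive} through Proposition~\ref{prop:tangent-truncation}, using $\G^{(k)}\subseteq\Hh^{(k)}$ and the monotonicity of the top-degree term $H(1)$ from the one-step formula \cite[Theorem~5.1]{EFMPV}. The opening relative-Aluffi computation and the double-truncation check are not needed, since the induction hypothesis applies directly to the rank-$(k+1)$ matroid $M^{(k)}$ with the pair $\G^{(k)}\subseteq\Hh^{(k)}$.
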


\begin{proof}
This follows from the proof of Corollary~\ref{cor:Chern-alpha-positive}.
Indeed, the same induction applies after observing that
\[
   \G^{(k)}\subseteq \Hh^{(k)}
\]
for every \(k\), and that the top-degree term is monotone by the one-step
blow-up formula for the Chow polynomial.
\end{proof}
\begin{remark}
Assume that \(M\) is realizable.  The preceding arguments have simple
geometric interpretations since \(\alpha\) is the pullback of the hyperplane class from the original projective space before the wonderful blow-ups.

If \(F\) has rank \(s\), then the center indexed by \(F\) has image of
dimension \(d-s\) in the original projective space.  Hence \(q>d-s\) general
hyperplanes may be chosen to avoid this image. The formula
\[
   \int x_F^s\alpha^{d-s}=(-1)^{s-1}
\]
is the corresponding exceptional-divisor self-intersection.  

The truncation statement has the same geometric meaning: intersecting with
\(d-k\) general hyperplanes cuts the original arrangement to a rank
\(k+1\) truncation (see \cite{BES24}).  The factor \((1+\alpha)^{d-k}\) in
Proposition~\ref{prop:tangent-truncation} records the normal bundle of this
linear section. The final inequality now follows from tracking topological Euler characteristic.
\end{remark}

\begin{corollary}[Miyaoka--Yau inequality]
\label{cor:miyaoka-yau}
Assume \(d\ge2\), then
\begin{equation}\label{eq:BMY-alpha-inequality}
   d
   \int_{M,\G}c_1(T_{M,\G})^2\alpha^{d-2}
   \le
   2(d+1)
   \int_{M,\G}c_2(T_{M,\G})\alpha^{d-2}.
\end{equation}
This has the same form as the higher-dimensional
Miyaoka--Yau inequality with respect to the hyperplane class \(\alpha\); compare \cite{GrebKebekusTaji}.
\end{corollary}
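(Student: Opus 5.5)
The plan is to reduce \eqref{eq:BMY-alpha-inequality} to the case \(k=2\) of Corollary~\ref{cor:Chern-alpha-positive}, using Corollary~\ref{cor:Todd-alpha-degrees} to eliminate \(\int c_1^2\alpha^{d-2}\). Write \(c_i:=c_i(T_{M,\G})\) and \(\int:=\int_{M,\G}\). Recall the universal Todd polynomials \(\td_1=c_1/2\) and \(\td_2=(c_1^2+c_2)/12\). These hold for the \(K\)-class \(T_{M,\G}\), since \(\td\) is defined through formal Chern roots.

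First, apply Corollary~\ref{cor:Todd-alpha-degrees} with \(k=1\) and \(k=2\). Writing \(\binom{m+d}{d}=\frac{1}{d!}\prod_{j=1}^d(m+j)\), the coefficient of \(m^{d-k}\) is \(e_k(1,\dots,d)/d!\). For \(k=1\) this gives
\[
   \int c_1\alpha^{d-1}=2\,(d-1)!\,\frac{d(d+1)/2}{d!}=d+1 .
\]
This is not needed for the inequality, but it serves as a sanity check against \(\PP^d\). For \(k=2\) we have
\[
   e_2(1,\dots,d)=\tfrac12\Bigl[\bigl(\tfrac{d(d+1)}2\bigr)^2-\tfrac{d(d+1)(2d+1)}6\Bigr]=\frac{(d-1)d(d+1)(3d+2)}{24},
\]
so that
\[
   \int (c_1^2+c_2)\alpha^{d-2}
   =12\,(d-2)!\,\frac{e_2(1,\dots,d)}{d!}
   =\frac{(d+1)(3d+2)}{2}.
\]

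Second, substitute \(\int c_1^2\alpha^{d-2}=\frac{(d+1)(3d+2)}2-\int c_2\alpha^{d-2}\) into \eqref{eq:BMY-alpha-inequality}. The inequality becomes
\[
   \frac{d(d+1)(3d+2)}{2}\le (3d+2)\int c_2\alpha^{d-2}.
\]
Since \(3d+2>0\), this is equivalent to \(\int c_2\alpha^{d-2}\ge\binom{d+1}{2}\). That is exactly Corollary~\ref{cor:Chern-alpha-positive} with \(k=2\), which applies because \(d\ge2\).

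The only real point of care is the coefficient extraction in Corollary~\ref{cor:Todd-alpha-degrees} and the arithmetic of \(e_2(1,\dots,d)\). As a check, equality should hold for \(\PP^d\), where \(c_1^2=(d+1)^2\) and \(c_2=\binom{d+1}2\). Indeed, \((d+1)^2+\binom{d+1}{2}=\frac{(d+1)(3d+2)}2\), and \(d(d+1)^2=2(d+1)\binom{d+1}{2}\). Both identities are consistent, so I expect no genuine obstacle.
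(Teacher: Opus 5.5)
Your proposal is correct and follows the paper's own route: use Corollary~\ref{cor:Todd-alpha-degrees} at \(k=2\) to obtain \(\int_{M,\G}(c_1^2+c_2)\alpha^{d-2}=\frac{(d+1)(3d+2)}{2}\), which reduces the inequality to \(\int_{M,\G}c_2(T_{M,\G})\alpha^{d-2}\ge\binom{d+1}{2}\), the case \(k=2\) of Corollary~\ref{cor:Chern-alpha-positive}. Your arithmetic is right. It also accounts for the paper's closing remark: the defect in \eqref{eq:BMY-alpha-inequality} is exactly \((3d+2)\) times the excess of \(\int_{M,\G}c_2(T_{M,\G})\alpha^{d-2}\) over \(\binom{d+1}{2}\).
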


\begin{proof}
The proof follows from combining Corollary~\ref{cor:Chern-alpha-positive} and Corollary~\ref{cor:Todd-alpha-degrees} when \(k = 2\).
\end{proof}
In fact, a direct computation will show the difference \[   
    d\int_{M,\G}c_1(T_{M,\G})^2\alpha^{d-2} -
    2(d+1)\int_{M,\G}c_2(T_{M,\G})\alpha^{d-2} = -(3d+2)n_2,
\] where \(n_2\) is the number of rank 2 flats in \(\G\).

\printbibliography

\end{document}